\theoremstyle{thmstyleone}%
\newtheorem{theorem}{Theorem}%  meant for continuous numbers
\newtheorem{lemma}[theorem]{Lemma}%
\theoremstyle{thmstyletwo}%
\newtheorem{remark}{Remark}%
\theoremstyle{thmstylethree}%
\begin{document}

\title[Lipolysis on Lipid Droplets: Modelling and Numerics]{Lipolysis on Lipid Droplets: Mathematical Modelling and Numerical Discretisations}

%%=============================================================%%
%% GivenName	-> \fnm{Joergen W.}
%% Particle	-> \spfx{van der} -> surname prefix
%% FamilyName	-> \sur{Ploeg}
%% Suffix	-> \sfx{IV}
%% \author*[1,2]{\fnm{Joergen W.} \spfx{van der} \sur{Ploeg}
%%  \sfx{IV}}\email{iauthor@gmail.com}
%%=============================================================%%

\author[1]{\fnm{Thomas} \sur{Apel}}\email{thomas.apel@unibw.de}

\author[2]{\fnm{Klemens} \sur{Fellner}}\email{klemens.fellner@uni-graz.at}

\author[1]{\fnm{Volker} \sur{Kempf}}\email{volker.kempf@unibw.de}

\author*[2]{\fnm{Reymart} \sur{Salcedo-Lagunero}}\email{reymart-salcedo.lagunero@uni-graz.at}

\author[1]{\fnm{Philipp} \sur{Zilk}}\email{philipp.zilk@unibw.de}

\affil[1]{\orgdiv{Institute for Mathematics and Computer-Based Simulation}, \orgname{University of the Bundeswehr Munich}, \orgaddress{\street{Werner-Heisenberg-Weg 39}, \city{Munich}, \postcode{85577}, \state{Bavaria}, \country{Germany}}}

\affil*[2]{\orgdiv{Department of Mathematics and Scientific Computing}, \orgname{University of Graz}, \orgaddress{\street{Heinrichstrasse 36}, \city{Graz}, \postcode{8010}, \state{Styria}, \country{Austria}}}

%%==================================%%
%% Sample for unstructured abstract %%
%%==================================%%

\abstract{
	Lipolysis is a life-essential metabolic process, which supplies fatty acids stored in lipid droplets to the body in order to match the demands of building new cells and providing cellular energy.

	In this paper, we present a first mathematical modelling approach for lipolysis, which takes into account that the involved enzymes act on the surface of lipid droplets. We postulate an active region near the surface where the substrates are within reach of the surface-bound enzymes and formulate a system of reaction-diffusion PDEs, which connect the active region to the inner core of lipid droplets via interface conditions.

	We establish two numerical discretisations based on finite element method and isogeometric analysis, and validate them to perform reliably. Since numerical tests are best performed on non-zero explicit stationary state solutions, we introduce and analyse a model, which describes besides lipolysis also a reverse process (yet in a physiologically much oversimplified way). The system is not coercive such that establishing well-posedness is a non-standard task. We prove the unique existence of global and equilibrium solutions. We establish exponential convergence to the equilibrium solutions using the entropy method. We then study the stationary state model and compute explicitly for radially symmetric solutions. Concerning the finite element methods, we show numerically the linear and quadratic convergence of the errors with respect to the $H^{1}$- and $L^{2}$-norms, respectively.

	Finally, we present numerical simulations of a prototypical PDE model of lipolysis and illustrate that enzyme clustering on lipid droplets can significantly slow down lipolysis.
	}

\keywords{lipid hydrolysis, lipolysis, transacylation, enzyme reaction, finite element method, entropy method}

\pacs[MSC Classification]{35E20, 35K57, 65N30, 92C40, 92C45}

\maketitle

\section{Introduction}\label{sec1}

Fatty acids (FAs) are crucial for the production of adenosine triphosphates (ATP), synthesis of biological membranes, thermogenesis, and signal transduction \cite{Glatz-2014,Zechner-2012}. Animals and humans store FAs in the form of water-insoluble triglycerides (TGs) within cytosolic lipid droplets (LDs) of specialised fat cells called adipocytes, but also in other cell types. From a biochemical perspective, a TG molecule is composed of three FAs esterified to a glycerol (GL) backbone.

Lipolysis, also known as lipid hydrolysis, is the splitting of ester bonds in TGs by the addition of water releasing the fundamental components of three FAs and the glycerol backbone \cite{schweiger2014}. Lipolysis is an essential metabolic process as the body requires FAs for the build-up of the phospholipid bilayer surrounding every cell and many other purposes. During times of starvation or high energy consuming activities, our body processes TGs stored in LDs and the released FAs are used for ATP production, which provides cellular energy in the body.

In white adipose tissues, lipolysis involves a coordinated process catalysed by three enzymes \cite{zimmermann1383}. Figure~\ref{fig:lipolysis} provides a simplified schematic diagram: lipolysis is regulated by the activation of the enzyme adipose triglyceride lipase (ATGL) via the regulatory protein comparative gene identification-58 (CGI-58). The enzyme ATGL hydrolyses TGs producing diglycerides (DGs, a glycerol backbone with two attached FAs) and a first free FA. Then, another enzyme called hormone-sensitive lipase (HSL) hydrolyses DGs producing monoglycerides (MGs, a glycerol backbone with one attached FA) and a second free FA. Finally, the enzyme monoglyceride lipase (MGL) breaks down MGs to release the glycerol backbone and a third free FA.
\begin{figure}[htbp]
	\centering
	\includegraphics[scale=1.1]{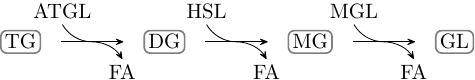}
	\caption{Lipolysis is a three-step process: ATGL hydrolyses TGs producing DGs and a first free FA; HSL hydrolyses DGs producing MGs and a second free FA; finally, MGL hydrolyses MGs releasing the backbone glycerol and a third free FA}
	\label{fig:lipolysis}
\end{figure}

The scheme depicted in Figure~\ref{fig:lipolysis} is a simplification of the \textit{in vivo} situation: Firstly, other enzymes are known to degrade TGs, but in white adipose tissues, those are believed to be negligible in a first approach. Secondly, ATGL is capable to also hydrolyse DGs (besides TGs), yet significantly less efficiently than HSL. Similarly, HSL is able to also hydrolyse TGs (besides DGs), but much less prominently than ATGL, which is believed to be the key regulator of lipolysis.

Thirdly, ATGL is a remarkable enzyme in the sense that besides its hydrolytic activities with substrate TGs (and DGs, which we neglect), it catalyses a transacylation reaction which transfers an FA from one DG to another DG as shown in Figure~\ref{fig:transacylation}.
\begin{figure}[htbp]
	\centering
	\includegraphics[scale=1.1]{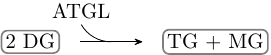}
	\caption{In parallel to TG hydrolysis, ATGL also catalyses the transacylation reaction of two DG molecules into one TG and one MG}
	\label{fig:transacylation}
\end{figure}

The mechanism of lipolysis represented by Figure~\ref{fig:lipolysis} has been first established in 2004 \cite{zimmermann1383}. Previous mathematical models for lipolysis \cite{Kim-2008,Loevfors-2021,elias2023}, were formulated in terms of (spatially homogeneous) rate equations as part of system biology approaches. Recently, another ODE rate equations showed that transacylation can play a significant role in the lipolytic cascade \cite{elias2023}: On one hand, transacylation is a partial feedback mechanism forming TGs from DGs and is therefore able to slow down the lipolytic machinery. On the other hand, transacylation forms also MGs leading (under some conditions) to an increased production of downstream products by up to 80\% \cite{elias2023}. Transacylation involves two DGs and its reaction rate is therefore a quadratic function in terms of the DG concentration. This is a significant difference to DG hydrolysis, which is modelled as Michaelis-Menten process with a linearly bounded reaction rate function. Therefore, the larger the DG concentration during lipolysis, the stronger  will be the effects of DG transacylation.

Lipolysis is localised at lipid droplets, consisting of a core made of lipids and a layer of phospholipids, which connects the lipids to the surrounding aquatic environment, as illustrated on Figure~\ref{fig:lipiddroplet}b. As an example, Figure~\ref{fig:lipiddroplet}a shows a cytoplasmic image of a lipid droplet.

\begin{figure}[htb]
	\centering
	(a)\includegraphics[width=0.45\linewidth]{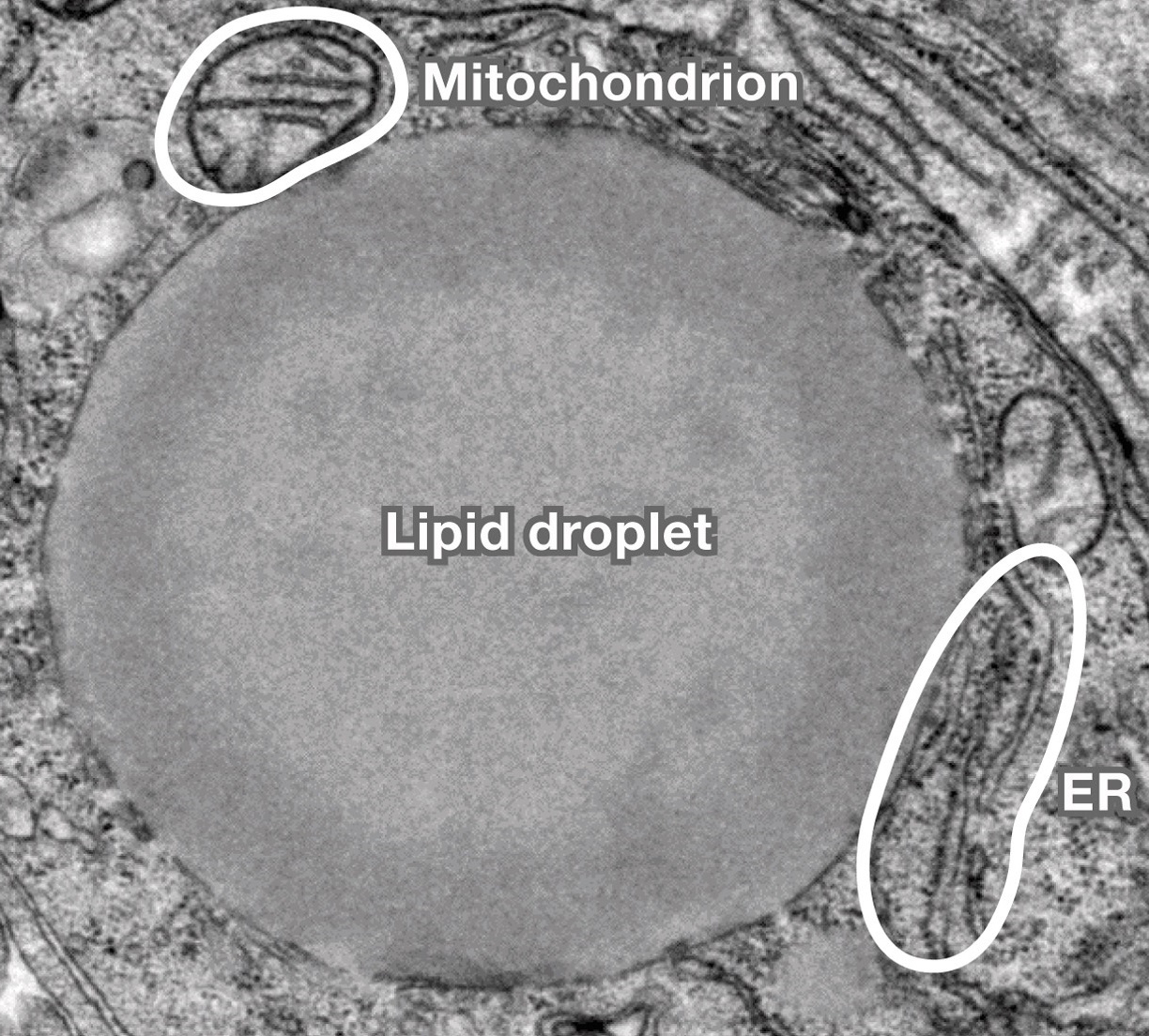}(b)\includegraphics[width=0.45\linewidth]{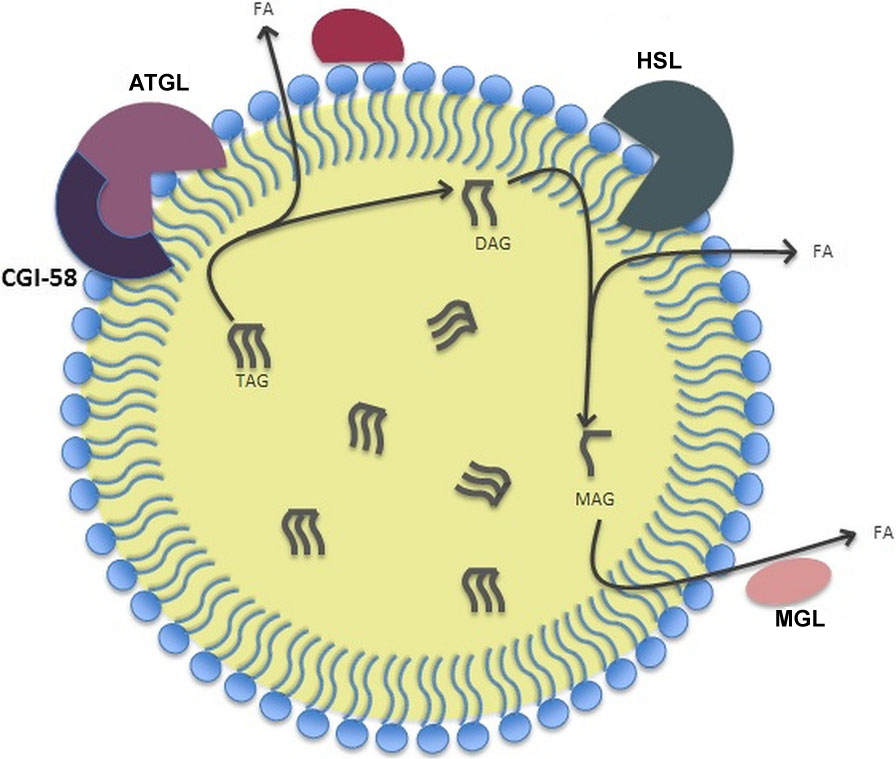}
	\caption{(a) A cytoplasmic lipid droplet in a cultured hepatoma cell adopted from \protect\cite{farese2009} (b) Structural components in a lipid droplet adopted from \protect\cite{onal2017} and modified to match notations. The lipid droplet is composed of several components; the triglycerides are found inside the droplet and are surrounded by a layer of phospholipids}
	\label{fig:lipiddroplet}
\end{figure}

All previous rate equation models \cite{elias2023,Kim-2008,Loevfors-2021} have the crucial drawback of not taking into account that TGs are stored in LDs and that ATGL and HSL act on or near the surface.

This article presents a first PDE modelling approach describing lipolysis on LDs based on reaction-diffusion equations. In good approximation of many \textit{in vivo} situations, we model an LD as spherical in shape and refer to it as a domain $\Omega \subset \mathbb{R}^{3}$ for the rest of this text. The enzymes which regulate lipolysis are localised on the surface of the LD. Accordingly, we postulate a thin active region near the surface where the substrates (i.e. TG, DG, MG) can come in contact with the enzymes (i.e. ATGL, HSL, MGL). TGs that are sufficiently far from the surface of a LD are no immediate substrate, therefore substantially reducing the substrate availability compared to the models using  ODE rate equations, which implicitly assume that enzymes and substrates are part of a three-dimensional solution.

\begin{figure}[htb]
	\centering
	(a)\includegraphics[scale=1]{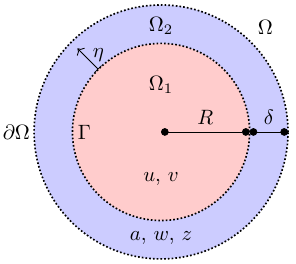}(b)\hspace{0.1in}\includegraphics[scale=1.3]{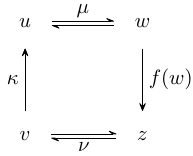}
	\caption{(a) The reservoir region $\Omega_{1}$ is the ball with radius $R > 0$ and outer unit normal $\eta$. The active region $\Omega_{2}$ is the annulus with thickness $\delta > 0$. The boundary of the reservoir region coincides with the \textit{interface} $\Gamma$ towards the active region. The outer boundary of the active region is the boundary $\partial\Omega$ (b) A chemical reaction network for TGs and DGs in the reservoir and active regions. The constants $\mu, \, \nu > 0$ denote the interface flux rate for TG and DG, $\kappa > 0$ models the feedback rate of DG into TG at the reservoir region, and $f$ is the Michaelis-Menten reaction term which governs the hydrolytic reaction between TG and ATGL}
	\label{fig:chemfig}
\end{figure}

Accordingly, we divide $\Omega$ into a \textit{reservoir region} denoted by $\Omega_{1} = B(0,R) \subset \Omega$, a ball of (non-dimensional) radius $R > 0$, and the \textit{active region} denoted by $\Omega_{2} = B(0,R+\delta) \setminus \overline{B(0,R)} \subset \Omega$, a spherical shell with thickness $\delta > 0$. The active region, where the substrate-enzyme reactions occur, is a modelling choice which describes that the enzymes are localised at the surface of the lipid droplet. One can think of the thickness $\delta$ as a molecular reach of the enzymes, which is independent of the overall size of the LD. The domain $\Omega$ in 2-D is illustrated on Figure~\ref{fig:chemfig}a.

A main part of this paper establishes a robust numerical scheme which is able to reliably simulate the dynamics of lipolysis on realistically sized LDs, for which the non-dimensional ratio of $\delta / R$ might vary between $10^{-1}$ and $10^{-3}$. In order to trustfully simulate LDs with such thin active regions, we present two different finite element implementations: a first based on \verb|FEniCS| \cite{logg2012} with the advantages of a more direct implementation but at the cost of discretisation issues based on the nonconvex active region (see Remark~\ref{rem:VarCrim} below), and a second discretisation based on \verb|GeoPDEs| \cite{vazquez2016,defalco2011,hughes2009} using isogeometric analysis, which can exactly represent conical geometries.

For the verification of the numerical schemes, we would like to use an explicit non-trivial stationary state solution as test case. However, the stationary state solution to the lipolytic scheme in Figure~\ref{fig:lipolysis} and Figure~\ref{fig:transacylation} is zero for the concentrations of  TG, DG, and MG as they were all hydrolysed into GL and FAs.

Therefore, the first part of this paper discusses a testing model system with a non-trivial positive equilibrium state, which we use for numerical testing purposes. To achieve this, we assume that besides TGs producing DGs and FAs in the active region, the reverse process of DGs forming TGs by the uptake of an FA happens in the reservoir region. This is a much oversimplified physiological shortcut as the build up of TGs involves another set of enzymes (e.g. diacylglycerol O-acyltransferase (DGAT) 1 and 2) and requires the activation of FAs by the coenzyme A (CoA). Also, the build-up of TGs does not happen inside the LD. Finally, for the sake of simplicity, we will not consider MGs and GLs as part the testing model, as they constitute  downstream products and can be post-calculated from knowing DGs.

We denote by $u := u(x,t)$ and $v := v(x,t)$ the (non-dimensional) concentrations of TGs and DGs in the reservoir region, respectively. Analogously, $w := w(x,t)$ and $z := z(x,t)$ for TGs and DGs in the active region, respectively. The active region is characterised by the presence of ATGL and we denote its concentration by $a_{3} := a_{3}(x,t)$. We then consider the chemical network shown on Figure~\ref{fig:chemfig}b.

The corresponding reaction-diffusion-interface model reads for arbitrary (non-dimensional) $T>0$
\begin{equation} \label{eqn:ParabolicModel}
	\begin{split}
		&\begin{cases}
			u_{t} - d_{3} \Delta u = \kappa v, & \quad \ \ (x,t) \in \Omega_{1} \times (0,T], \\
			d_{3} \partial_{\eta} u = \mu[w - u], & \quad \ \ (x,t) \in \Gamma \times (0,T],\\
			v_{t} - d_{2} \Delta v = -\kappa v, & \quad \ \ (x,t) \in \Omega_{1} \times (0,T],\\
			d_{2} \partial_{\eta} v = \nu[z - v], & \quad \ \ (x,t) \in \Gamma \times (0,T],\\
		\end{cases} \\
		&\begin{cases}
			w_{t} - d_{3} \Delta w = - f(w), & (x,t) \in \Omega_{2} \times (0,T],\\
			-d_{3} \partial_{\eta} w = -\mu[w - u], & (x,t) \in \Gamma \times (0,T], \\
			d_{3} \partial_{\eta} w = 0, & (x,t) \in \partial\Omega \times (0,T],\\
			z_{t}- d_{2} \Delta z = f(w), & (x,t) \in \Omega_{2} \times (0,T],\\
			-d_{2} \partial_{\eta} z = -\nu[z - v], & (x,t) \in \Gamma \times (0,T], \\
			d_{2} \partial_{\eta} z = 0, & (x,t) \in \partial\Omega \times (0,T],
		\end{cases}
	\end{split}
\end{equation}
where $d_{2},\ d_{3} > 0$ are constant diffusion coefficients, $\mu, \ \nu > 0$ are constant flux rates at the interface, $\kappa > 0$ is a constant feedback rate of DG into TG in the reservoir region, and
\begin{equation*}
	f(w) = \dfrac{v_{3} a_{3} w}{k_{3} + w} \qquad\text{with}\qquad v_{3}, \ k_{3} > 0,
\end{equation*}
is a Michaelis-Menten substrate-enzyme reaction rate describing the action of ATGL, $v_{3}$ is the maximum reaction velocity and $k_{3}$ is the Michaelis constant. Note that the homogeneous boundary conditions at $\partial\Omega$ are realistic since TGs and DGs are supposed to be too hydrophobic to leave the LDs.

System \eqref{eqn:ParabolicModel} is coupled with the initial data
\begin{equation} \label{eqn:ParabolicInitialData}
	\begin{cases}
		u(x,0) = u_{0}(x), \quad \ v(x,0) = v_{0}(x), \quad x \in \Omega_{1}, \\
		w(x,0) = w_{0}(x), \quad z(x,0) = z_{0}(x), \quad x \in \Omega_{2}.
	\end{cases}
\end{equation}
By adding the PDEs in system \eqref{eqn:ParabolicModel} and  using integration by parts, we remark that any solution to system \eqref{eqn:ParabolicModel} conserves the total mass of glycerol contained in TGs and DGs for all time $t \geq 0$,
\begin{equation} \label{eqn:MassCon}
	\int_{\Omega_{1}} [u(x,t) + v(x,t)] \dd{x} + \int_{\Omega_{2}} [w(x,t) + z(x,t)] \dd{x} = M_{0},
\end{equation}
where $M_{0} = \int_{\Omega_{1}} [u_{0}(x) + v_{0}(x)] \dd{x} + \int_{\Omega_{2}} [w_{0}(x) + z_{0}(x)] \dd{x} > 0$ is the given initial total mass.

Since the Michaelis-Menten kinetics does not allow for explicit stationary state solutions, we let the ATGL concentration $a_{3} > 0$ to be constant and consider its linear approximation,
\begin{equation} \label{eqn:LinearParabolic}
	f(w) = \rho w \qq{where} \rho = \frac{v_{3}}{k_{3}} a_{3} > 0 \qq{is constant,}
\end{equation}
which holds in the well-known Michaelis-Menten regime for small substrate concentrations, that is, when $w \ll k_{3}$. This is a plausible assumption in the active region where ATGL and phospholipids are expected to leave limited volume for TGs and DGs. We repeat that the reverse process $v \xrightarrow{\kappa} u$ in the reservoir region is an oversimplified model for the build-up of TGs from DGs and chosen linear for the sake of constructing a non-negative explicit stationary state solution for numerical testing purposes. For this reason, unless otherwise explicitly stated, we therefore refer to system \eqref{eqn:ParabolicModel} as the linear parabolic system where $f$ is as defined in \eqref{eqn:LinearParabolic}.

\subsubsection*{Overview}

Section~\ref{sec:math_prelim} contains the necessary mathematical preliminaries and the precise problem statement.

In Section~\ref{sec:wellposedness}, we study the linear parabolic system \eqref{eqn:ParabolicModel} and its corresponding linear stationary state (elliptic) system. We prove existence and uniqueness of solution for the parabolic and elliptic systems by using fixed point arguments following ideas of, e.g., \cite{egger2018}. Technical challenges are caused by the complex-balanced structure of the chemical reaction network on Figure~\ref{fig:chemfig}b. Moreover, we utilise the entropy method to prove the exponential convergence of the solution of system \eqref{eqn:ParabolicModel} to the complex-balanced equilibrium.  The main results are stated in Theorem~\ref{thm:WellposednessParabolicSystem}, Theorem~\ref{thm:WellposenessElliptic}, and Theorem~\ref{thm:ExponentialConvergence}.

Section~\ref{sec:numerical_analysis} contains results on the numerical schemes and simulations. Here, we compute the explicit radially symmetric stationary state solutions and use a finite element method to approximate the numerical solutions of the parabolic and elliptic problems. We demonstrate that the numerical schemes feature the desired convergence rates of discretisation errors.

In Section~\ref{sec:clustering}, we use numerical simulations to study the potential effects of ATGL clustering. Indeed, ATGL is known to be able to cluster and experimental data suggest that the distribution of ATGL over the surface of LDs can be quite heterogeneous. While ATGL heterogeneities cannot be addressed in the previous ODE models, we study a prototypical PDE model of lipolysis according to the processes depicted in Figure~\ref{fig:lipolysis} and Figure~\ref{fig:transacylation} and show simulations that ATGL clustering can significantly delay downstream metabolites compared to the same amount of unclustered ATGL.

Finally, we provide a summary and conclusions in Section~\ref{sec:conclusions}.

\section{Mathematical Preliminaries}\label{sec:math_prelim}

Let us start by introducing relevant mathematical notations and the problem statement. Let $\Omega \subset \mathbb{R}^{d}$ be a bounded Lipschitz domain. Let $\Omega_{1} \subset \Omega$ and define $\Omega_{2} = \Omega \setminus \overline{\Omega}_{1}$ such that $\Omega = \Omega_{1} \cup \Omega_{2}$ and $\Omega_{1} \cap \Omega_{2} = \emptyset$. Assume that $\Omega_{1}$ and $\Omega_{2}$ are bounded Lipschitz domains. Denote by $\Gamma := \overline{\Omega}_{1} \cap \overline{\Omega}_{2} \subset \Omega$ the interface which is assumed to be in the interior of $\Omega$. Define the Lebesgue product space $\mathbf{L}^{p}$ as $\mathbf{L}^{p} := L^{p}(\Omega_{1}) \times L^{p}(\Omega_{1}) \times L^{p}(\Omega_{2}) \times L^{p}(\Omega_{2})$ where $p \in [1, \infty)$, equipped with the norm
\[ \norm{\Phi}_{\mathbf{L}^{p}} = \qty(\norm{\varphi_{1}}_{L^{p}(\Omega_{1})}^{p} + \norm{\varphi_{2}}_{L^{p}(\Omega_{1})}^{p} + \norm{\varphi_{3}}_{L^{p}(\Omega_{2})}^{p} + \norm{\varphi_{4}}_{L^{p}(\Omega_{2})}^{p})^{\frac{1}{p}}, \]
for all $\Phi := (\varphi_{1}, \varphi_{2}, \varphi_{3}, \varphi_{4}) \in \mathbf{L}^{p}$. The Sobolev product space $\mathbf{H}^{m}$ is likewise defined as $\mathbf{H}^{m} := H^{m}(\Omega_{1}) \times H^{m}(\Omega_{1}) \times H^{m}(\Omega_{2}) \times H^{m}(\Omega_{2})$ where $m \geq 0$, equipped with the norm
\[ \norm{\Psi}_{\mathbf{H}^{m}} = \qty(\norm{\psi_{1}}_{H^{m}(\Omega_{1})}^{2} + \norm{\psi_{2}}_{H^{m}(\Omega_{1})}^{2} + \norm{\psi_{3}}_{H^{m}(\Omega_{2})}^{2} + \norm{\psi_{4}}_{H^{m}(\Omega_{2})}^{2})^{\frac{1}{2}}, \]
for all $\Psi := (\psi_{1}, \psi_{2}, \psi_{3}, \psi_{4}) \in \mathbf{H}^{m}$. Note that $\mathbf{H}^{0} = \mathbf{L}^{2}$ by convention. The classical Sobolev and Lebesgue spaces in $U$ are denoted by $H^{m}(U)$ and $L^{p}(U)$, respectively, where $U$ is either $\Omega_{1}$, $\Omega_{2}$, or $\Gamma$. The inner product in $L^{2}(U)$ is denoted by $(\cdot, \cdot)_{U}$. Finally, the dual space of $H^{m}(U)$ is denoted by $H^{m}(U)^{*}$.

We now consider the following definition of weak solutions: let $\mathbf{c} := (u,v,w,z)$ denote a tuple of functions where
\begin{align*}
	u, v \in L^{2}(0,T; H^{1}(\Omega_{1})) \cap H^{1}(0,T; H^{1}(\Omega_{1})^{*}), \\
	w, z \in L^{2}(0,T; H^{1}(\Omega_{2})) \cap H^{1}(0,T; H^{1}(\Omega_{2})^{*}).
\end{align*}
Then, a weak solution of the linear parabolic system \eqref{eqn:ParabolicModel} satisfies the weak formulation
\begin{equation} \label{eqn:VarEqn}
	\mathcal{B}(\dot{\mathbf{c}}, \Phi) + \mathcal{A}(\mathbf{c}, \Phi) = 0 \qq{for all test functions} \Phi \in \mathbf{H}^{1},
\end{equation}
where the linear form $\mathcal{B}$ is given by
\begin{equation} \label{eqn:BilinearForm1}
	\mathcal{B}(\dot{\mathbf{c}}, \Phi) := (\dot{u}, \varphi_{1})_{\Omega_{1}} + (\dot{v}, \varphi_{2})_{\Omega_{1}} + (\dot{w}, \varphi_{3})_{\Omega_{2}} + (\dot{z}, \varphi_{4})_{\Omega_{2}},
\end{equation}
the bilinear form $\mathcal{A}$ is given by
\begin{align}
	\mathcal{A}(\mathbf{c}, \Phi) &:= d_{3} (\nabla u, \nabla \varphi_{1})_{\Omega_{1}} + d_{2} (\nabla v, \nabla \varphi_{2})_{\Omega_{1}} + d_{3} (\nabla w, \nabla \varphi_{3})_{\Omega_{2}} \notag \\
	&+ d_{2} (\nabla z, \nabla \varphi_{4})_{\Omega_{2}} + \kappa (v, \varphi_{2} - \varphi_{1})_{\Omega_{1}}  + \rho (w, \varphi_{3} - \varphi_{4})_{\Omega_{2}} \notag \\
	&+ \mu(w - u, \varphi_{3} - \varphi_{1})_{\Gamma} + \nu(z - v, \varphi_{4} - \varphi_{2})_{\Gamma}, \label{eqn:BilinearForm2}
\end{align}
$\dot{\mathbf{c}} := (\dot{u}, \dot{v}, \dot{w}, \dot{z})$ denotes time derivatives, and in addition, the initial conditions in \eqref{eqn:ParabolicInitialData} and the total mass conservation law in \eqref{eqn:MassCon} hold.

\section{Well-posedness} \label{sec:wellposedness}

In this section, we discuss the existence, uniqueness, and continuous dependence on the initial datum of the weak solution of the linear parabolic system \eqref{eqn:ParabolicModel}.

\begin{lemma}[Continuity of $\mathcal{A}$]\label{lem:continuity}
	There exists a constant $K > 0$ depending only on the diffusion coefficients, reaction rates, flux constants, and the domains $\Omega_{1}$, $\Omega_{2}$, such that
	\begin{equation}\label{eqn:continuity}
		\abs{ \mathcal{A}(\mathbf{c}, \Phi) } \leq K \norm{\mathbf{c}}_{\mathbf{H}^{1}}\norm{\Phi}_{\mathbf{H}^{1}} \qq{for all} \mathbf{c}, \Phi \in \mathbf{H}^{1}.
	\end{equation}
\end{lemma}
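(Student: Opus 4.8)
The plan is to bound each of the eight terms in the definition \eqref{eqn:BilinearForm2} of $\mathcal{A}(\mathbf{c}, \Phi)$ separately and then collect the estimates. The first four terms are the diffusion contributions, which are the most straightforward: for instance, by the Cauchy–Schwarz inequality, $d_3 (\nabla u, \nabla\varphi_1)_{\Omega_1} \le d_3 \norm{\nabla u}_{L^2(\Omega_1)} \norm{\nabla\varphi_1}_{L^2(\Omega_1)} \le d_3 \norm{u}_{H^1(\Omega_1)} \norm{\varphi_1}_{H^1(\Omega_1)}$, and analogously for the $v$, $w$, $z$ gradient terms with constants $d_2, d_3, d_2$. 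The next two terms are the volumetric reaction contributions; again by Cauchy–Schwarz, $\kappa (v, \varphi_2 - \varphi_1)_{\Omega_1} \le \kappa \norm{v}_{L^2(\Omega_1)}(\norm{\varphi_1}_{L^2(\Omega_1)} + \norm{\varphi_2}_{L^2(\Omega_1)})$, and similarly for the $\rho$-term on $\Omega_2$, using the triangle inequality on the difference of test components.

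The main obstacle is the last two terms, which are boundary (interface) integrals over $\Gamma$ rather than volume integrals, so they cannot be controlled by $\mathbf{L}^2$ norms alone. The key tool here is the trace theorem: there is a constant $C_{\mathrm{tr}} > 0$ (depending on the Lipschitz domains $\Omega_1, \Omega_2$) such that $\norm{\phi}_{L^2(\Gamma)} \le C_{\mathrm{tr}} \norm{\phi}_{H^1(\Omega_i)}$ for traces of $H^1$ functions on either subdomain. Applying Cauchy–Schwarz on $L^2(\Gamma)$ followed by the trace inequality, I would estimate
\[
\mu(w - u, \varphi_3 - \varphi_1)_\Gamma \le \mu \norm{w - u}_{L^2(\Gamma)} \norm{\varphi_3 - \varphi_1}_{L^2(\Gamma)} \le \mu C_{\mathrm{tr}}^2 \bigl(\norm{u}_{H^1(\Omega_1)} + \norm{w}_{H^1(\Omega_2)}\bigr)\bigl(\norm{\varphi_1}_{H^1(\Omega_1)} + \norm{\varphi_3}_{H^1(\Omega_2)}\bigr),
\]
and similarly for the $\nu$-term involving $z - v$ and $\varphi_4 - \varphi_2$.

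Finally, I would collect all eight bounds. Each is a product of (a sum of some of the four $H^1$ component norms of $\mathbf{c}$) with (a sum of some component norms of $\Phi$), with a prefactor built from $d_2, d_3, \kappa, \rho, \mu, \nu, C_{\mathrm{tr}}$. Using the elementary inequalities $\norm{\psi_i}_{H^1} \le \norm{\Psi}_{\mathbf{H}^1}$ for each component and $a + b \le \sqrt{2}\,(a^2+b^2)^{1/2}$ to pass from sums of component norms to the product-space norm, every term is bounded by a constant multiple of $\norm{\mathbf{c}}_{\mathbf{H}^1}\norm{\Phi}_{\mathbf{H}^1}$. Summing yields \eqref{eqn:continuity} with $K$ equal to the sum of the individual constants, which depends only on the stated data. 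I expect no genuine difficulty beyond careful bookkeeping of the constants and the correct invocation of the trace theorem on the two subdomains.
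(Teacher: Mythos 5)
Your proposal is correct and follows essentially the same route as the paper's proof: Cauchy--Schwarz/H\"{o}lder for the volume terms, the trace theorem with constant $C_{\mathrm{tr}}$ for the two interface terms, and collection of the componentwise bounds into the product-space norms. The paper's own argument is just a terser version of this same bookkeeping, so there is nothing to add.
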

\begin{proof}
	The proof follows from direct computations using the triangle and H\"{o}lder's inequalities, and the fact that the $L^{2}$-norm is bounded above by the $H^{1}$-norm, e.g.
	\[ |d_{3} (\nabla u, \nabla \varphi_{1})_{\Omega_{1}}| \leq d_{3} \norm{u}_{H^{1}(\Omega_{1})}\norm{\varphi_{1}}_{H^{1}(\Omega_{1})} \leq d_{3}\norm{\mathbf{c}}_{\mathbf{H}^{1}}\norm{\Phi}_{\mathbf{H}^{1}}, \]
	and the Trace Theorem, see \cite[Theorem 1, Section 5.5]{evans2010}, e.g.
	\begin{align*}
		&|\mu(w - u, \varphi_{3} - \varphi_{1})_{\Gamma}| \\
		&\qquad \leq K_{1}\mu \qty(\norm{w}_{H^{1}(\Omega_{2})} + \norm{u}_{H^{1}(\Omega_{1})})\qty(\norm{\varphi_{1}}_{H^{1}(\Omega_{1})} +\norm{\varphi_{3}}_{H^{1}(\Omega_{2})}),
	\end{align*}
	for some constant $K_{1} > 0$. Hence, we obtain the constant $K > 0$ such that estimate \eqref{eqn:continuity} holds.
\end{proof}

\begin{lemma}[G\r{a}rding Inequality]\label{lem:GardingIneq}
	There exist constants $\alpha > 0$ and $\beta > 0$ such that for all $t \in [0, T]$, we have
	\begin{equation}\label{eqn:GardingIneq}
		\mathcal{A}(\mathbf{c}, \mathbf{c}) + \alpha \norm{\mathbf{c}}_{\mathbf{L}^{2}}^{2} \geq \beta \norm{\mathbf{c}}_{\mathbf{H}^{1}}^{2}.
	\end{equation}
	One admissible choice is $\alpha = 1 + \max \qty{ \frac{\kappa}{2}, \frac{\rho }{2} } > 0$ and $\beta = \min \left\{ 1, d_{2}, d_{3}, \kappa ,\rho \right\} > 0$.
\end{lemma}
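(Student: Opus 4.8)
The plan is to test the bilinear form against the argument itself, i.e.\ to compute $\mathcal{A}(\mathbf{c},\mathbf{c})$ by setting $\Phi = \mathbf{c} = (u,v,w,z)$ in \eqref{eqn:BilinearForm2}, and then to sort the resulting contributions into three groups by sign. First, the four gradient terms become $d_{3}\norm{\nabla u}_{L^{2}(\Omega_{1})}^{2} + d_{2}\norm{\nabla v}_{L^{2}(\Omega_{1})}^{2} + d_{3}\norm{\nabla w}_{L^{2}(\Omega_{2})}^{2} + d_{2}\norm{\nabla z}_{L^{2}(\Omega_{2})}^{2}$, which are non-negative with coefficients bounded below by $\min\{d_{2},d_{3}\}$. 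Second, testing with $\mathbf{c}$ forces $\varphi_{3}-\varphi_{1} = w-u$ and $\varphi_{4}-\varphi_{2} = z-v$, so the two interface terms collapse into the perfect squares $\mu\norm{w-u}_{L^{2}(\Gamma)}^{2} + \nu\norm{z-v}_{L^{2}(\Gamma)}^{2} \geq 0$ (using $\mu,\nu>0$); this is the dissipative interface structure of the complex-balanced network, and these terms may simply be discarded in a lower bound.

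The genuine obstacle is the reaction pair $\kappa(v,v-u)_{\Omega_{1}} + \rho(w,w-z)_{\Omega_{2}} = \kappa\norm{v}_{L^{2}(\Omega_{1})}^{2} + \rho\norm{w}_{L^{2}(\Omega_{2})}^{2} - \kappa(v,u)_{\Omega_{1}} - \rho(w,z)_{\Omega_{2}}$, whose cross terms $-\kappa(v,u)_{\Omega_{1}}$ and $-\rho(w,z)_{\Omega_{2}}$ are indefinite in sign; this is precisely why $\mathcal{A}$ is not coercive, as already flagged in the introduction. I would control them with Young's inequality, $\kappa\abs{(v,u)_{\Omega_{1}}} \leq \tfrac{\kappa}{2}\norm{v}_{L^{2}(\Omega_{1})}^{2} + \tfrac{\kappa}{2}\norm{u}_{L^{2}(\Omega_{1})}^{2}$ and analogously for $\rho(w,z)_{\Omega_{2}}$, so that the positive squares absorb half of themselves and the only remaining negative contributions are $-\tfrac{\kappa}{2}\norm{u}_{L^{2}(\Omega_{1})}^{2}$ and $-\tfrac{\rho}{2}\norm{z}_{L^{2}(\Omega_{2})}^{2}$.

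Collecting these estimates yields $\mathcal{A}(\mathbf{c},\mathbf{c}) \geq \min\{d_{2},d_{3}\}\,(\text{sum of the four squared gradients}) + \tfrac{\kappa}{2}\norm{v}_{L^{2}(\Omega_{1})}^{2} + \tfrac{\rho}{2}\norm{w}_{L^{2}(\Omega_{2})}^{2} - \tfrac{\kappa}{2}\norm{u}_{L^{2}(\Omega_{1})}^{2} - \tfrac{\rho}{2}\norm{z}_{L^{2}(\Omega_{2})}^{2}$. The final step is the G\r{a}rding shift: adding $\alpha\norm{\mathbf{c}}_{\mathbf{L}^{2}}^{2}$ with $\alpha = 1 + \max\{\tfrac{\kappa}{2},\tfrac{\rho}{2}\}$ raises every $L^{2}$ coefficient to at least $1$, since $\alpha - \tfrac{\kappa}{2} \geq 1$ and $\alpha - \tfrac{\rho}{2} \geq 1$ neutralise the two negative terms while the coefficients of $\norm{v}^{2}$ and $\norm{w}^{2}$ only increase. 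Combining this uniform $L^{2}$ lower bound with $\min\{d_{2},d_{3}\}$ on the gradients and recalling the definition of $\norm{\cdot}_{\mathbf{H}^{1}}$ gives $\mathcal{A}(\mathbf{c},\mathbf{c}) + \alpha\norm{\mathbf{c}}_{\mathbf{L}^{2}}^{2} \geq \beta\norm{\mathbf{c}}_{\mathbf{H}^{1}}^{2}$ with $\beta = \min\{1,d_{2},d_{3}\}$, which is at least the stated admissible value $\min\{1,d_{2},d_{3},\kappa,\rho\}$, so the inequality holds \emph{a fortiori} for the $\beta$ announced in the statement. The only delicate point is the sign bookkeeping of the indefinite cross terms; everything else is Cauchy--Schwarz together with the definitions of the product norms.
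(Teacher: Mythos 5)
Your proposal is correct and follows essentially the same route as the paper's proof: test with $\Phi=\mathbf{c}$, discard the non-negative interface squares, apply Young's inequality with weight $\tfrac12$ to the indefinite cross terms $\kappa(v,u)_{\Omega_{1}}$ and $\rho(w,z)_{\Omega_{2}}$, and absorb the remaining negative $L^{2}$ contributions with the shift $\alpha = 1+\max\{\tfrac{\kappa}{2},\tfrac{\rho}{2}\}$. The only difference is bookkeeping: the paper keeps the full $\kappa\norm{v}_{L^{2}(\Omega_{1})}^{2}$ and $\rho\norm{w}_{L^{2}(\Omega_{2})}^{2}$ in the positive part (which is why $\kappa$ and $\rho$ appear in its $\beta$), whereas you cancel half of them against the Young terms and obtain the marginally sharper constant $\beta=\min\{1,d_{2},d_{3}\}$, from which the stated $\beta=\min\{1,d_{2},d_{3},\kappa,\rho\}$ follows \emph{a fortiori}.
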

\begin{proof}
	By Cauchy-Schwarz and Young's inequalities, we obtain
	\begin{align*}
		&\mathcal{A}(\mathbf{c}, \mathbf{c}) \geq d_{3} \norm{\nabla u}_{L^{2}(\Omega_{1})}^{2} + d_{2} \norm{\nabla v}_{L^{2}(\Omega_{1})}^{2} + d_{3} \norm{\nabla w}_{L^{2}(\Omega_{2})}^{2} + d_{2} \norm{\nabla z}_{L^{2}(\Omega_{2})}^{2} \\
		&\quad + \norm{u}_{L^{2}(\Omega_{1})}^{2} + \kappa\norm{v}_{L^{2}(\Omega_{1})}^{2} + \rho \norm{w}_{L^{2}(\Omega_{2})}^{2} + \norm{z}_{L^{2}(\Omega_{2})}^{2}  \\
		&\quad - \qty( 1 + \frac{\kappa}{2} )\norm{u}_{L^{2}(\Omega_{1})}^{2} - \frac{\kappa}{2} \norm{v}_{L^{2}(\Omega_{1})}^{2} - \frac{\rho }{2} \norm{w}_{L^{2}(\Omega_{2})}^{2} - \qty( 1 + \frac{\rho }{2} )\norm{z}_{L^{2}(\Omega_{2})}^{2} \\
		&\quad \geq \beta \norm{\mathbf{c}}_{\mathbf{H}^{1}}^{2} - \alpha \norm{\mathbf{c}}_{\mathbf{L}^{2}}^{2},
	\end{align*}
	where $\beta = \min \left\{ 1, d_{2}, d_{3}, \kappa ,\rho \right\} > 0$ and $\alpha = 1 + \max \qty{ \frac{\kappa}{2}, \frac{\rho }{2} } > 0$, proving inequality \eqref{eqn:GardingIneq} as desired.
\end{proof}

The following theorem guarantees the existence, uniqueness, and continuous dependence from the initial datum of the global weak solution to system \eqref{eqn:ParabolicModel}. For clarity, the dual of $\mathbf{H}^{m}$ is denoted by $(\mathbf{H}^{m})^{*} := H^{m}(\Omega_{1})^{*} \times H^{m}(\Omega_{1})^{*}  \times H^{m}(\Omega_{2})^{*}  \times H^{m}(\Omega_{2})^{*} $.
\begin{theorem}[Well-posedness of the Parabolic System]\label{thm:WellposednessParabolicSystem}
	Given any initial datum $\mathbf{c}_{0} := (u_{0}, v_{0}, w_{0}, z_{0}) \in \mathbf{L}^{2}$, the linear system \eqref{eqn:ParabolicModel} possesses a unique weak solution $\mathbf{c} = (u,v,w,z)$ such that
	\[ u, v \in L^{2}(0,T; H^{1}(\Omega_{1})) \cap H^{1}(0,T; H^{1}(\Omega_{1})^{*}), \]
	and
	\[ w, z \in L^{2}(0,T; H^{1}(\Omega_{2})) \cap H^{1}(0,T; H^{1}(\Omega_{2})^{*}), \]
	satisfying the stability estimate for all $t \in [0, T]$,
	\begin{equation} \label{eqn:WellposednessEnergyEstimate}
		\norm{\mathbf{c}(t)}_{\mathbf{L}^{2}}^{2} + \int_{0}^{t} \norm{\mathbf{c}(\tau)}_{\mathbf{H}^{1}}^{2} \dd{\tau} + \int_{0}^{t} \norm{\dot{\mathbf{c}}(\tau)}_{(\mathbf{H}^{1})^{*}}^{2} \dd{\tau} \leq C \norm{\mathbf{c}_{0}}_{\mathbf{L}^{2}}^{2},
	\end{equation}
	for some constant $C > 0$ depending only on $T$, $K$ from the continuity constant of $\mathcal{A}$ in Lemma~\ref{lem:continuity}, and $\alpha, \beta$ from the G\r{a}rding inequality in Lemma~\ref{lem:GardingIneq}. If the initial datum is non-negative, then the solution remains non-negative for all time.
\end{theorem}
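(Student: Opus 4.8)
The structure of the two preceding lemmas points directly to the Faedo--Galerkin method as in, e.g., \cite[Ch.~7]{evans2010}: the continuity of $\mathcal{A}$ (Lemma~\ref{lem:continuity}) and the G\r{a}rding inequality (Lemma~\ref{lem:GardingIneq}) are precisely the two ingredients needed to run the standard abstract parabolic existence argument, the failure of coercivity being absorbed into the lower-order term $\alpha\norm{\mathbf{c}}_{\mathbf{L}^2}^2$. (One could alternatively decouple the system and treat the reaction and interface couplings by a fixed-point iteration in the spirit of \cite{egger2018}, but the G\r{a}rding structure makes the direct route immediate.) First I would fix a basis $\{e_k\}_{k\geq 1}$ of the separable Hilbert space $\mathbf{H}^1$, chosen $\mathbf{L}^2$-orthonormal and $\mathbf{H}^1$-orthogonal so that the $\mathbf{L}^2$-projection onto $\mathrm{span}\{e_1,\dots,e_m\}$ is stable in both norms, and seek Galerkin approximants $\mathbf{c}_m(t) = \sum_{k=1}^m g_k^m(t)\,e_k$ solving $\mathcal{B}(\dot{\mathbf{c}}_m, e_k) + \mathcal{A}(\mathbf{c}_m, e_k) = 0$ for $k=1,\dots,m$, with $\mathbf{c}_m(0)$ the projection of $\mathbf{c}_0$. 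As $\mathcal{A},\mathcal{B}$ are bilinear, this is a linear ODE system with nonsingular mass matrix, so linear ODE theory gives a unique global-in-time solution.

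Next I would derive the a priori estimates. Testing with $\mathbf{c}_m$ itself and using $\mathcal{B}(\dot{\mathbf{c}}_m,\mathbf{c}_m)=\tfrac12\tfrac{d}{dt}\norm{\mathbf{c}_m}_{\mathbf{L}^2}^2$ together with the G\r{a}rding inequality gives
\[ \tfrac12\tfrac{d}{dt}\norm{\mathbf{c}_m}_{\mathbf{L}^2}^2 + \beta\norm{\mathbf{c}_m}_{\mathbf{H}^1}^2 \leq \alpha\norm{\mathbf{c}_m}_{\mathbf{L}^2}^2, \]
whence Gr\"{o}nwall bounds $\norm{\mathbf{c}_m(t)}_{\mathbf{L}^2}^2 \lesssim e^{2\alpha t}\norm{\mathbf{c}_0}_{\mathbf{L}^2}^2$ and, after integration in time, $\int_0^T\norm{\mathbf{c}_m}_{\mathbf{H}^1}^2\dd{t} \lesssim \norm{\mathbf{c}_0}_{\mathbf{L}^2}^2$, uniformly in $m$. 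The bound $\int_0^T\norm{\dot{\mathbf{c}}_m}_{(\mathbf{H}^1)^*}^2\dd{t} \lesssim \norm{\mathbf{c}_0}_{\mathbf{L}^2}^2$ then follows from the equation, the stability of the projection, and the continuity constant $K$ of $\mathcal{A}$. These uniform bounds let me extract a subsequence with $\mathbf{c}_m \rightharpoonup \mathbf{c}$ in $L^2(0,T;\mathbf{H}^1)$ and $\dot{\mathbf{c}}_m \rightharpoonup \dot{\mathbf{c}}$ in $L^2(0,T;(\mathbf{H}^1)^*)$; linearity makes the passage to the limit in the weak formulation routine, while the embedding $L^2(0,T;\mathbf{H}^1)\cap H^1(0,T;(\mathbf{H}^1)^*)\hookrightarrow C([0,T];\mathbf{L}^2)$ recovers the initial datum and yields \eqref{eqn:WellposednessEnergyEstimate} by weak lower semicontinuity. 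Uniqueness and continuous dependence are immediate: the difference $\mathbf{d}$ of two solutions satisfies $\tfrac12\tfrac{d}{dt}\norm{\mathbf{d}}_{\mathbf{L}^2}^2 \leq \alpha\norm{\mathbf{d}}_{\mathbf{L}^2}^2$, so Gr\"{o}nwall with $\mathbf{d}(0)=0$ forces $\mathbf{d}\equiv 0$.

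For non-negativity I would test the weak formulation with $\Phi = -\mathbf{c}^- := (-u^-,-v^-,-w^-,-z^-)$, where $\varphi^- := \max(-\varphi,0)\geq 0$; this is admissible since $\mathbf{c}^-\in L^2(0,T;\mathbf{H}^1)$, and the time pairing $\mathcal{B}(\dot{\mathbf{c}},-\mathbf{c}^-)=\tfrac12\tfrac{d}{dt}\norm{\mathbf{c}^-}_{\mathbf{L}^2}^2$ is justified by the Lions--Magenes chain rule. Each diffusion term contributes $d_i\norm{\nabla(\cdot)^-}_{L^2}^2\geq 0$, so the crux is the sign of the reaction and interface couplings. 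Using $\varphi\varphi^- = -(\varphi^-)^2$ and the splitting $\varphi=\varphi^+-\varphi^-$, the interface term reorganises as $\mu(w-u,\,u^--w^-)_\Gamma = \mu\bigl(\norm{u^--w^-}_{L^2(\Gamma)}^2 + (w^+,u^-)_\Gamma + (u^+,w^-)_\Gamma\bigr)\geq 0$, and similarly for the $\nu$-term, so the interface couplings are in fact sign-favourable. The reaction terms $\kappa(v,\,u^--v^-)_{\Omega_1}$ and $\rho(w,\,z^--w^-)_{\Omega_2}$ are bounded below, after the same splitting and Young's inequality, by $-\tfrac{\kappa}{2}\norm{u^-}_{\Omega_1}^2$ and $-\tfrac{\rho}{2}\norm{z^-}_{\Omega_2}^2$. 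Collecting everything yields $\tfrac12\tfrac{d}{dt}\norm{\mathbf{c}^-}_{\mathbf{L}^2}^2 \leq \tfrac{\kappa+\rho}{2}\norm{\mathbf{c}^-}_{\mathbf{L}^2}^2$, and since non-negative initial data give $\norm{\mathbf{c}^-(0)}_{\mathbf{L}^2}=0$, Gr\"{o}nwall forces $\mathbf{c}^-\equiv 0$, i.e. $\mathbf{c}\geq 0$ for all time.

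The genuinely non-standard part is the sign bookkeeping for non-negativity, and in particular checking that the coupling across the interface $\Gamma$ does not destroy positivity: the completing-the-square identity turning the cross-traces into $\norm{u^--w^-}_{L^2(\Gamma)}^2$ is the decisive point and reflects the quasi-positive (complex-balanced) structure of the network on the right of Figure~\ref{fig:chemfig}. The existence, uniqueness, and stability parts, by contrast, are textbook once Lemmas~\ref{lem:continuity} and~\ref{lem:GardingIneq} are in hand; the only items deserving explicit care there are the $\mathbf{H}^1$-stability of the Galerkin projection used in the dual-norm estimate on $\dot{\mathbf{c}}_m$ and the space-time embedding invoked to make sense of the initial condition.
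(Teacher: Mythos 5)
Your proposal is correct, and it rests on the same two pillars as the paper's proof --- the continuity of $\mathcal{A}$ (Lemma~\ref{lem:continuity}) and the G\r{a}rding inequality (Lemma~\ref{lem:GardingIneq}) --- but the packaging differs at two points. For existence and uniqueness the paper does not run Faedo--Galerkin by hand: it performs the exponential change of variables $\widetilde{\mathbf{c}}(t) = e^{-\gamma t}\mathbf{c}(t)$ with $\gamma = \alpha$, observes that the shifted form $\widetilde{\mathcal{A}} = \mathcal{A} + \alpha(\cdot,\cdot)_{\mathbf{L}^{2}}$ is coercive, and then cites the abstract linear parabolic theory of Dautray--Lions; your Galerkin construction is the self-contained version of exactly that machinery, with your Gr\"{o}nwall absorption of $\alpha\norm{\cdot}_{\mathbf{L}^{2}}^{2}$ playing the role of the paper's exponential rescaling, so nothing is gained or lost mathematically --- yours is longer but self-contained, the paper's is shorter but outsourced. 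The stability estimate is derived the same way in both (testing with the solution, respectively its Galerkin truncation, then G\r{a}rding plus the duality bound via $K$), and your uniqueness-by-energy argument coincides with what the abstract theory delivers. Where your write-up genuinely improves on the paper is the non-negativity step: the paper tests with $e^{-\lambda t}u^{-}$ etc.\ and asserts the identity
\[ \frac{1}{2}\dv{}{t}\qty(e^{-\lambda t}\norm{\mathbf{c}^{-}(t)}_{\mathbf{L}^{2}}^{2}) + e^{-\lambda t}\qty(\mathcal{A}(\mathbf{c}^{-}(t),\mathbf{c}^{-}(t)) + \frac{\lambda}{2}\norm{\mathbf{c}^{-}(t)}_{\mathbf{L}^{2}}^{2}) = 0, \]
which silently discards the cross terms between positive and negative parts: since $\mathcal{A}(\mathbf{c},-\mathbf{c}^{-}) = \mathcal{A}(\mathbf{c}^{-},\mathbf{c}^{-}) - \mathcal{A}(\mathbf{c}^{+},\mathbf{c}^{-})$ and $\mathcal{A}(\mathbf{c}^{+},\mathbf{c}^{-})$ does not vanish, the paper's ``$=0$'' is really ``$\leq 0$'', and justifying that inequality requires precisely the terms you track --- $(w^{+},u^{-})_{\Gamma}$, $(u^{+},w^{-})_{\Gamma}$, $(v^{+},u^{-})_{\Omega_{1}}$, and so on --- all of which your completing-the-square computation shows carry the favourable sign. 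In other words, your sign bookkeeping substantiates a step the paper glosses over; your final Gr\"{o}nwall bound with constant $\tfrac{\kappa+\rho}{2}$ and the paper's appeal to G\r{a}rding applied to $\mathcal{A}(\mathbf{c}^{-},\mathbf{c}^{-})$ then reach the same conclusion.
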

\begin{proof}
	The existence and uniqueness of the global weak solution follows from Section 3, Chapter XVIII of \cite{dautray2000}. In particular, since the bilinear form $\mathcal{A}$ is only weakly coercive (that is, it only satisfies a G\r{a}rding inequality as in Lemma~\ref{lem:GardingIneq}), we introduce the following change of variables: for a finite $T > 0$, set $\widetilde{u}(t) = e^{-\gamma t}u(t)$, $\widetilde{v}(t) = e^{-\gamma t}v(t)$, $\widetilde{w}(t) = e^{-\gamma t}w(t)$, and $\widetilde{z}(t) = e^{-\gamma t}z(t)$, with $\gamma \in \mathbb{R}$ and $t \in [0, T]$. Let $\widetilde{\mathbf{c}}(t) := (\widetilde{u}(t), \widetilde{v}(t), \widetilde{w}(t), \widetilde{z}(t)) \in \mathbf{H}^{1}$. Then we have that
	\[ \dot{\widetilde{\mathbf{c}}} = e^{-\gamma t} \dot{\mathbf{c}} - \gamma \widetilde{\mathbf{c}}, \]
	and $\widetilde{\mathbf{c}}$ satisfies the variational equation
	\begin{equation} \label{eqn:TransformedVariationalEquation}
		\mathcal{B}(\dot{\widetilde{\mathbf{c}}}, \Phi) + \widetilde{\mathcal{A}}(\widetilde{\mathbf{c}}, \Phi) = 0 \qq{for all} \Phi \in \mathbf{H}^{1},
	\end{equation}
	where the bilinear form $\widetilde{\mathcal{A}}$ is defined as
	\[ \widetilde{\mathcal{A}}(\widetilde{\mathbf{c}}, \Phi) := \mathcal{A}(\widetilde{\mathbf{c}}, \Phi) + \gamma[(\widetilde{u}, \varphi_{1})_{\Omega_{1}} + (\widetilde{v}, \varphi_{2})_{\Omega_{1}} + (\widetilde{w}, \varphi_{3})_{\Omega_{2}} + (\widetilde{z}, \varphi_{4})_{\Omega_{2}}]. \]
	By choosing $\gamma = \alpha$ in Lemma~\ref{lem:GardingIneq}, the bilinear form $\widetilde{\mathcal{A}}$ is coercive:
	\[ \widetilde{\mathcal{A}}(\widetilde{\mathbf{c}}, \widetilde{\mathbf{c}}) = \mathcal{A}(\widetilde{\mathbf{c}}, \widetilde{\mathbf{c}}) + \alpha \norm{\widetilde{\mathbf{c}}}_{\mathbf{L}^{2}}^{2} \geq \beta \norm{\widetilde{\mathbf{c}}}_{\mathbf{H}^{1}}^{2}. \]
	Also, note that the new variables satisfy the given initial data in \eqref{eqn:ParabolicInitialData}, that is, $\widetilde{\mathbf{c}}(0) = \mathbf{c}_{0}$. Thus, there exists a unique solution $\widetilde{\mathbf{c}} \in \mathbf{H}^{1}$ to the variational equation \eqref{eqn:TransformedVariationalEquation}, and therefore to the original problem \eqref{eqn:VarEqn}.

	With the existence and uniqueness of solution, we now prove the stability estimate in \eqref{eqn:WellposednessEnergyEstimate}. Multiplying the PDE for $u$ with $e^{-\lambda t}u$ for some $\lambda > 0$ (to be determined later) in system \eqref{eqn:ParabolicModel} and integrating with respect to space, we get
	\begin{align}
		\frac{1}{2} & \dv{}{t} \qty(e^{-\lambda t} \norm{u(t)}_{L^{2}(\Omega_{1})}^{2}) + e^{-\lambda t}\frac{\lambda}{2}\norm{u(t)}_{L^{2}(\Omega_{1})}^{2} +  e^{-\lambda t}d_{3}\norm{\nabla u(t)}_{L^{2}(\Omega_{1})}^{2} \notag \\
		& - e^{-\lambda t} \kappa(v(t), u(t))_{\Omega_{1}} - e^{-\lambda t} \mu(w(t)-u(t), u(t))_{\Gamma} = 0. \label{eqn:est_u}
	\end{align}
	Analogously for $v$, $w$, and $z$, we obtain
	\begin{align}
		\frac{1}{2} & \dv{}{t} \qty(e^{-\lambda t} \norm{v(t)}_{L^{2}(\Omega_{1})}^{2}) + e^{-\lambda t}\frac{\lambda}{2}\norm{v(t)}_{L^{2}(\Omega_{1})}^{2} +  e^{-\lambda t}d_{3}\norm{\nabla v(t)}_{L^{2}(\Omega_{1})}^{2} \notag \\
		& + e^{-\lambda t} \kappa \norm{v(t)}_{L^{2}(\Omega_{1})}^{2} - e^{-\lambda t} \nu(z(t)-v(t), v(t))_{\Gamma} = 0, \label{eqn:est_v}
	\end{align}
	\begin{align}
		\frac{1}{2} & \dv{}{t} \qty(e^{-\lambda t} \norm{w(t)}_{L^{2}(\Omega_{2})}^{2}) + e^{-\lambda t}\frac{\lambda}{2}\norm{w(t)}_{L^{2}(\Omega_{2})}^{2} +  e^{-\lambda t}d_{2}\norm{\nabla w(t)}_{L^{2}(\Omega_{2})}^{2} \notag \\
		& + e^{-\lambda t} \rho \norm{w(t)}_{L^{2}(\Omega_{2})}^{2} + e^{-\lambda t} \mu(w(t)-u(t), w(t))_{\Gamma} = 0, \label{eqn:est_w}
	\end{align}
	\begin{align}
		\frac{1}{2} & \dv{}{t} \qty(e^{-\lambda t} \norm{z(t)}_{L^{2}(\Omega_{2})}^{2}) + e^{-\lambda t}\frac{\lambda}{2}\norm{z(t)}_{L^{2}(\Omega_{2})}^{2} +  e^{-\lambda t}d_{2}\norm{\nabla z(t)}_{L^{2}(\Omega_{2})}^{2} \notag \\
		& - e^{-\lambda t} \rho (z(t), w(t))_{\Omega_{2}} + e^{-\lambda t} \nu(z(t)-v(t), z(t))_{\Gamma} = 0. \label{eqn:est_z}
	\end{align}
	Adding Eqns. \eqref{eqn:est_u}, \eqref{eqn:est_v}, \eqref{eqn:est_w}, and \eqref{eqn:est_z}, we obtain
	\[ \frac{1}{2}\dv{}{t} \qty(e^{-\lambda t} \norm{\mathbf{c}(t)}_{\mathbf{L}^{2}}^{2}) + e^{-\lambda t}\qty( \mathcal{A}(\mathbf{c}(t), \mathbf{c}(t)) +  \frac{\lambda}{2}\norm{\mathbf{c}(t)}_{\mathbf{L}^{2}}^{2} ) = 0. \]
	Recalling the G\r{a}rding inequality \eqref{eqn:GardingIneq} and choosing $\lambda = 2\alpha$, we have
	\[ \dv{}{t} \qty(e^{-2 \alpha t} \norm{\mathbf{c}(t)}_{\mathbf{L}^{2}}^{2}) + 2\beta e^{-2\alpha t}\norm{\mathbf{c}(t)}_{\mathbf{H}^{1}}^{2} \leq 0.  \]
	Integration with respect to time over $(0,t)$ and using $\mathbf{c}(0) = \mathbf{c}_{0}$, we obtain, for every $t \in (0, T)$,
	\[ e^{-2\alpha T} \norm{\mathbf{c}(t)}_{\mathbf{L}^{2}}^{2} + 2\beta \int_{0}^{t} e^{-2\alpha \tau} \norm{\mathbf{c}(\tau)}_{\mathbf{H}^{1}}^{2} \dd{\tau} \leq \norm{\mathbf{c}_{0}}_{\mathbf{L}^{2}}^{2}. \]
	Since
	\[ e^{-2\alpha T} \int_{0}^{t} \norm{\mathbf{c}(\tau)}_{\mathbf{H}^{1}}^{2} \dd{\tau} \leq \int_{0}^{t} e^{-2\alpha \tau} \norm{\mathbf{c}(\tau)}_{\mathbf{H}^{1}}^{2} \dd{\tau}, \]
	we therefore get
	\begin{equation}\label{eqn:parabolic_est1}
		\norm{\mathbf{c}(t)}_{\mathbf{L}^{2}}^{2} + \int_{0}^{t} \norm{\mathbf{c}(\tau)}_{\mathbf{H}^{1}}^{2} \dd{\tau} \leq \frac{e^{2\alpha T}}{\min\{1, 2\beta\}} \norm{\mathbf{c}_{0}}_{\mathbf{L}^{2}}^{2}.
	\end{equation}
	Now, observe that by the continuity of $\mathcal{A}$, we have
	\[ \abs{\mathcal{B}(\dot{\mathbf{c}}(t), \Phi)} = \abs{-\mathcal{A}(\mathbf{c}(t), \Phi)} \leq K \norm{\mathbf{c}(t)}_{\mathbf{H}^{1}} \norm{\Phi}_{\mathbf{H}^{1}}, \]
	and therefore, we obtain
	\[ \norm{\dot{\mathbf{c}}(t)}_{(\mathbf{H}^{1})^{*}}^{2} \leq K^{2} \norm{\mathbf{c}(t)}_{\mathbf{H}^{1}}^{2}. \]
	Integration with respect to time implies that
	\begin{equation} \label{eqn:parabolic_est2}
		\int_{0}^{t} \norm{\dot{\mathbf{c}}(\tau)}_{(\mathbf{H}^{1})^{*}}^{2} \dd{\tau} \leq K^{2} \int_{0}^{t} \norm{\mathbf{c}(\tau)}_{\mathbf{H}^{1}}^{2} \dd{\tau} \leq K^{2}\frac{e^{2\alpha T}}{\min\{1, 2\beta\}} \norm{\mathbf{c}_{0}}_{\mathbf{L}^{2}}^{2}.
	\end{equation}
	Hence, adding the estimates \eqref{eqn:parabolic_est1} and \eqref{eqn:parabolic_est2}, we have
	\[ \norm{\mathbf{c}(t)}_{\mathbf{L}^{2}}^{2} + \int_{0}^{t} \norm{\mathbf{c}(\tau)}_{\mathbf{H}^{1}}^{2} \dd{\tau} + \int_{0}^{t} \norm{\dot{\mathbf{c}}(\tau)}_{(\mathbf{H}^{1})^{*}}^{2} \dd{\tau} \leq C \norm{\mathbf{c}_{0}}_{\mathbf{L}^{2}}^{2}, \]
	where $C = (1 + K^{2})\frac{e^{2\alpha T}}{\min\{1, 2\beta\}} > 0$, proving the stability estimate \eqref{eqn:WellposednessEnergyEstimate}.

	Finally, let us show the non-negativity of the solution provided the given initial datum is non-negative. In particular, given the non-negative initial datum $\mathbf{c}_{0}$, we multiply the PDE for $u$ with $e^{-\lambda t} u^{-}$ where $u^{-} := -\min\{0, u\} \in H^{1}(\Omega_{1})$ denotes the negative part of $u$, and analogously following the steps as in the stability estimate proof above, we obtain
	\[ \frac{1}{2}\dv{}{t} \qty(e^{-\lambda t} \norm{\mathbf{c}^{-}(t)}_{\mathbf{L}^{2}}^{2}) + e^{-\lambda t}\qty( \mathcal{A}(\mathbf{c}^{-}(t), \mathbf{c}^{-}(t)) +  \frac{\lambda}{2}\norm{\mathbf{c}^{-}(t)}_{\mathbf{L}^{2}}^{2} ) = 0. \]
	As before, the G\r{a}rding inequality in Lemma~\ref{lem:GardingIneq} and the positivity of the exponential function imply that
	\[ \dv{}{t} \qty(e^{-2\alpha t} \norm{\mathbf{c}^{-}(t)}_{\mathbf{L}^{2}}^{2}) \leq 0. \]
	Integration with respect to time yields $\norm{\mathbf{c}^{-}(t)}_{\mathbf{L}^{2}}^{2} \leq e^{2\alpha t} \norm{\mathbf{c}^{-}(0)}_{\mathbf{L}^{2}}^{2}=0$ since we have that $\mathbf{c}^{-}(0) = 0$ due to non-negativity of $\mathbf{c}_{0}$. Hence, $\mathbf{c}^{-}(x, t) = 0$ a.e. in $\Omega$ and thus, for all $t \in [0, T]$, the solution $\mathbf{c}$ is non-negative a.e. in $\Omega$.
\end{proof}

\begin{lemma}[Interpolation Estimates] \label{lem:interpolationEstimates}
	Let $\Omega \subset \mathbb{R}^{d}$, where $d=2,3$, be a bounded and Lipschitz domain and let $f \in H^{1}(\Omega)$. Then for any $\varepsilon, \tilde{\varepsilon} > 0$, there exist constants $C_{\varepsilon}, C_{\tilde{\varepsilon}} > 0$ such that
	\[ \norm{f}_{L^{2}(\Omega)}^{2} \leq \varepsilon \norm{\nabla f}_{L^{2}(\Omega)}^{2} + C_{\varepsilon}\norm{f}_{L^{1}(\Omega)}^{2} \text{ and } \norm{f}_{L^{2}(\partial\Omega)}^{2} \leq \tilde{\varepsilon} \norm{\nabla f}_{L^{2}(\Omega)}^{2} + C_{\tilde{\varepsilon}}\norm{f}_{L^{1}(\Omega)}^{2}.\]
\end{lemma}
\begin{proof}
	For the first estimate, the Sobolev Embedding Theorem \cite[Theorem 4.12]{Adams2003} implies that since $f \in H^{1}(\Omega)$, then $f \in L^{6}(\Omega)$ and $\norm{f}_{L^{6}(\Omega)} \leq C \norm{f}_{H^{1}(\Omega)}$ for some constant $C > 0$. Using the $L^{p}$ interpolation inequality \cite[Theorem 2.11]{Adams2003}, since $f \in L^{6}(\Omega) \cap L^{1}(\Omega)$, then $u \in L^{2}(\Omega)$ and
	\[ \norm{f}_{L^{2}(\Omega)} \leq \norm{f}_{L^{1}(\Omega)}^{2/5} \norm{f}_{L^{6}(\Omega)}^{3/5}. \]
	Therefore, we have
	\[ \norm{f}_{L^{2}(\Omega)} \leq \norm{f}_{L^{1}(\Omega)}^{2/5} \norm{f}_{L^{6}(\Omega)}^{3/5} \leq C \norm{f}_{L^{1}(\Omega)}^{2/5} \norm{f}_{H^{1}(\Omega)}^{3/5}. \]
	Squaring both sides leads to
	\[ \norm{f}_{L^{2}(\Omega)}^{2} \leq C^{2} (\norm{f}_{L^{1}(\Omega)}^{2})^{2/5} (\norm{f}_{H^{1}(\Omega)}^{2})^{3/5}. \]
	Using a rescaled $\varepsilon$-Young's inequality for conjugate exponents \cite[Appendix B.2.d]{evans2010} on the right-hand side, for any $0 < \tilde{\varepsilon} < 1$, there exists a constant $C_{\tilde{\varepsilon}} > 0$ (with $C^2$ absorbed into $C_{\tilde{\varepsilon}}$) such that
	\begin{align*}
		\norm{f}_{L^{2}(\Omega)}^{2} &\leq \tilde{\varepsilon}\norm{f}_{H^{1}(\Omega)}^{2} + C_{\tilde{\varepsilon}} \norm{f}_{L^{1}(\Omega)}^{2} \\
		&= \tilde{\varepsilon}(\norm{f}_{L^{2}(\Omega)}^{2} + \norm{\nabla f}_{L^{2}(\Omega)}^{2}) + C_{\tilde{\varepsilon}} \norm{f}_{L^{1}(\Omega)}^{2},
	\end{align*}
	which implies that
	\[ (1 - \tilde{\varepsilon})\norm{f}_{L^{2}(\Omega)}^{2} \leq \tilde{\varepsilon}\norm{\nabla f}_{L^{2}(\Omega)}^{2} + C_{\tilde{\varepsilon}}\norm{f}_{L^{1}(\Omega)}^{2}. \]
	Hence, we obtain with $\varepsilon = \tilde{\varepsilon}/(1 - \tilde{\varepsilon}) > 0$
	\[ \norm{f}_{L^{2}(\Omega)}^{2} \leq  \varepsilon \norm{\nabla f}_{L^{2}(\Omega)}^{2} + C_{\varepsilon} \norm{f}_{L^{1}(\Omega)}^{2}. \]

	The second estimate follows from \cite[Theorem 1.5.1.10]{grisvard2011} with $p=2$ and the interpolation estimate above.
\end{proof}

\begin{theorem}[Global $\mathbf{L}^{2}$ bounds]
	Given the non-negative initial datum $\mathbf{c}_{0} \in \mathbf{L}^{2}$, there exists a constant $C > 0$ such that for all $t \geq 0$, we have
	\begin{equation} \label{eqn:GlobalBound}
		\norm{\mathbf{c}(t)}_{\mathbf{L}^{2}}^{2} \leq C,
	\end{equation}
	that is, the global weak solution to system \eqref{eqn:ParabolicModel} is uniformly bounded in time.
\end{theorem}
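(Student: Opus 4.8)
The plan is to get time-uniform control from the two genuinely global structures available, namely the conservation law \eqref{eqn:MassCon} and the non-negativity from Theorem~\ref{thm:WellposednessParabolicSystem}, and then to upgrade this to an $\mathbf{L}^{2}$ bound. First I would observe that the stability estimate \eqref{eqn:WellposednessEnergyEstimate} is useless here, since its constant degrades like $e^{2\alpha T}$ as $T\to\infty$. Instead, non-negativity of $u,v,w,z$ together with \eqref{eqn:MassCon} gives the uniform $L^{1}$ bounds $\norm{u(t)}_{L^{1}(\Omega_{1})},\norm{v(t)}_{L^{1}(\Omega_{1})}\leq M_{0}$ and $\norm{w(t)}_{L^{1}(\Omega_{2})},\norm{z(t)}_{L^{1}(\Omega_{2})}\leq M_{0}$ for all $t\geq 0$. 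Testing the weak formulation with $\Phi=\mathbf{c}$ yields the energy identity $\tfrac12\tfrac{d}{dt}\norm{\mathbf{c}(t)}_{\mathbf{L}^{2}}^{2}=-\mathcal{A}(\mathbf{c},\mathbf{c})$, so everything reduces to producing a differential inequality $\tfrac{d}{dt}\norm{\mathbf{c}}_{\mathbf{L}^{2}}^{2}\leq-\theta\norm{\mathbf{c}}_{\mathbf{L}^{2}}^{2}+C(M_{0})$ with $\theta>0$, after which Gr\"onwall's lemma gives $\norm{\mathbf{c}(t)}_{\mathbf{L}^{2}}^{2}\leq\max\{\norm{\mathbf{c}_{0}}_{\mathbf{L}^{2}}^{2},C(M_{0})/\theta\}$, which is the claim.

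The natural attempt to build such an inequality is to split each component into its region-wise average and a zero-mean fluctuation, e.g. $u=\bar{u}+u'$ with $\bar{u}$ the mean of $u$ over $\Omega_{1}$. The diffusion and interface contributions to $\mathcal{A}(\mathbf{c},\mathbf{c})$ are non-negative and, via a Poincar\'{e}-type inequality adapted to the dissipation functional (the region gradients together with the interface jump seminorms $\norm{w-u}_{\Gamma}$ and $\norm{z-v}_{\Gamma}$), dominate the $\mathbf{H}^{1}$-norm of the fluctuations; the means, being non-negative with total mass bounded by $M_{0}$, are individually bounded by a constant depending only on $M_{0}$ and the subdomain measures. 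The step I expect to be the main obstacle is controlling the indefinite reaction terms $\kappa(v,v-u)_{\Omega_{1}}+\rho(w,w-z)_{\Omega_{2}}$, which are exactly the reason $\mathcal{A}$ is only weakly coercive in Lemma~\ref{lem:GardingIneq} and which, for $d_{2}\neq d_{3}$, also spoil the obvious weighted Lyapunov candidates such as $\tfrac12\int_{\Omega_{1}}[(u+v)^{2}+v^{2}]$ (the induced cross term $(d_{2}+d_{3})\nabla u\cdot\nabla v$ is sign-indefinite). After the mean--fluctuation split and Young's inequality, the genuinely sign-changing part of these products involves only the bounded means and so contributes just the constant $C(M_{0})$; the residual quadratic fluctuation terms are then absorbed into the diffusion--interface dissipation, which closes the Gr\"onwall inequality provided the reaction rates are controlled by the diffusion through the Poincar\'{e} constant.

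To obtain the bound unconditionally, i.e. without a rate-versus-diffusion balance, the cleaner route is to exploit the smoothing (ultracontractivity) of the underlying \emph{autonomous} linear parabolic semigroup $S(t)$ generated by the system. Writing $\mathbf{c}(t)=S(1)\mathbf{c}(t-1)$ for $t\geq1$ and using the $\mathbf{L}^{1}\to\mathbf{L}^{2}$ regularisation estimate over a unit time window, one gets $\norm{\mathbf{c}(t)}_{\mathbf{L}^{2}}\leq C\norm{\mathbf{c}(t-1)}_{\mathbf{L}^{1}}\leq 4CM_{0}$ for all $t\geq1$, where the $L^{1}$ bound is the uniform estimate from the first paragraph and $C$ is uniform in the base time by autonomy; the remaining range $t\in[0,1]$ is covered directly by the stability estimate \eqref{eqn:WellposednessEnergyEstimate} with $T=1$ and $\mathbf{c}_{0}\in\mathbf{L}^{2}$. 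Either way one concludes the asserted uniform-in-time bound \eqref{eqn:GlobalBound}.
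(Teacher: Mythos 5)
Your first two paragraphs are, in substance, the paper's own proof. The paper differentiates $\mathcal{H}(t)=\tfrac12\|\mathbf{c}(t)\|_{\mathbf{L}^2}^2$ (i.e.\ tests with $\mathbf{c}$), uses non-negativity together with the conservation law \eqref{eqn:MassCon} to get the uniform bounds $\|u\|_{L^1(\Omega_1)},\|v\|_{L^1(\Omega_1)},\|w\|_{L^1(\Omega_2)},\|z\|_{L^1(\Omega_2)}\le M_0$, and then implements exactly your mean--fluctuation control through two cited inequalities: Grisvard's trace interpolation $\|\cdot\|_{L^2(\Gamma)}^2\le\varepsilon\|\nabla\cdot\|_{L^2}^2+C_\varepsilon\|\cdot\|_{L^1}^2$ for the interface terms, and Ne\v{c}as' mean Poincar\'e inequality $\|\cdot\|_{L^2}^2\le C_P\bigl(\|\nabla\cdot\|_{L^2}^2+\|\cdot\|_{L^1}^2\bigr)$ for the two components ($u$ and $z$) that carry no damping of their own; this yields $\tfrac{d}{dt}\mathcal{H}+a\mathcal{H}\le b$ with $b\sim M_0^2$, and Gr\"onwall concludes --- precisely your target differential inequality. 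Be aware, though, that the caveat you state openly (the argument closes only if the reaction rates are dominated by diffusion through the Poincar\'e constant) is present in the paper's proof as well, merely disguised: the paper ``chooses'' $0<\varepsilon_5<\min\{d_3/\kappa,C_P\}$, but a Poincar\'e constant cannot be chosen below its optimal value (the inequality only gets stronger, hence generally false), so positivity of the paper's constant $a$ in fact requires the same balance $C_P\kappa<d_3$, $C_P\rho<d_2$ that you flag. Your write-up is the more honest account; you also have the sign of the interface terms right (in $-\mathcal{A}(\mathbf{c},\mathbf{c})$ they are dissipative and can simply be dropped), whereas the paper carries them with a plus sign in \eqref{eqn:HFunc} and then spends the trace interpolation neutralising them.

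The genuine gap is in your third paragraph, the route that was supposed to remove the condition. The smoothing estimate $\|S(1)\|_{\mathbf{L}^1\to\mathbf{L}^2}<\infty$ is asserted, not proved, and for this operator it is not an off-the-shelf fact: the generator is a non-self-adjoint system operator coupling two subdomains through Robin-type transmission conditions on $\Gamma$, so scalar Neumann heat-kernel or ultracontractivity results do not apply directly. The standard way to prove it would be a Nash iteration, whose ingredients are exactly those of the direct argument --- positivity preservation, the uniform $L^1$ control from mass conservation, and a Nash/Poincar\'e-type functional inequality for the form --- plus a Duhamel step to handle the bounded reaction coupling; none of this comes for free, and without it the inequality $\|\mathbf{c}(t)\|_{\mathbf{L}^2}\le C\|\mathbf{c}(t-1)\|_{\mathbf{L}^1}$ is an unsupported leap. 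If one wants the statement unconditionally, the cleanest available route within the paper is different from both of yours: the exponential convergence to the positive, bounded equilibrium in Theorem~\ref{thm:ExponentialConvergence} (whose proof does not rely on the present theorem) immediately yields a time-uniform $\mathbf{L}^2$ bound.
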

\begin{proof}
	Take the time derivative of the quadratic function
	\[ \mathcal{H}(t) = \frac{1}{2}\qty( \norm{u(t)}_{L^{2}(\Omega_{1})}^{2} + \norm{v(t)}_{L^{2}(\Omega_{1})}^{2} + \norm{w(t)}_{L^{2}(\Omega_{2})}^{2} + \norm{z(t)}_{L^{2}(\Omega_{2})}^{2} ), \]
	to obtain (suppressing the variable $t$ in the notations)
	\begin{align}
		\dv{}{t}\mathcal{H} &= -d_{3}\norm{\nabla u}_{L^{2}(\Omega_{1})}^{2} - d_{2}\norm{\nabla v}_{L^{2}(\Omega_{1})}^{2} - d_{3}\norm{\nabla w}_{L^{2}(\Omega_{2})}^{2} - d_{2}\norm{\nabla z}_{L^{2}(\Omega_{2})}^{2} \notag \\
		&\quad -\kappa\norm{v}_{L^{2}(\Omega_{1})}^{2} - \rho \norm{w}_{L^{2}(\Omega_{2})}^{2} + \kappa(u, v)_{\Omega_{1}} + \rho (w, z)_{\Omega_{2}} \notag \\
		&\quad + \mu \norm{w - u}_{L^{2}(\Gamma)}^{2} + \nu\norm{z - v}_{L^{2}(\Gamma)}^{2}. \label{eqn:HFunc}
	\end{align}
	Let us estimate the inner products and the $L^{2}$-norms at the interface $\Gamma$. Observe that by Cauchy-Schwarz and Young's inequalities, we get
	\begin{align*}
		\kappa(u, v)_{\Omega_{1}} &\leq \frac{\kappa}{2}\qty(\norm{u}_{L^{2}(\Omega_{1})}^{2} + \norm{v}_{L^{2}(\Omega_{1})}^{2}), \\
		\rho(w, z)_{\Omega_{2}} &\leq \frac{\rho}{2}\qty(\norm{w}_{L^{2}(\Omega_{2})}^{2} + \norm{z}_{L^{2}(\Omega_{2})}^{2}), \\
		\mu \norm{w - u}_{L^{2}(\Gamma)}^{2} &\leq 2\mu \qty(\norm{w}_{L^{2}(\Gamma)}^{2} + \norm{u}_{L^{2}(\Gamma)}^{2}), \\
		\nu \norm{z - v}_{L^{2}(\Gamma)}^{2} &\leq 2\nu \qty(\norm{z}_{L^{2}(\Gamma)}^{2} + \norm{v}_{L^{2}(\Gamma)}^{2}).
	\end{align*}
	From Lemma~\ref{lem:interpolationEstimates}, for all $u, v \in H^{1}(\Omega_{1})$ and $w, z \in H^{1}(\Omega_{2})$ and for any $\varepsilon_{i} > 0$, $i=1,2,\ldots,6$, there exist corresponding constants $C_{\varepsilon_{i}} > 0$, $i=1,2,\ldots,6$, such that
	\begin{align*}
		\norm{u}_{L^{2}(\Gamma)}^{2} &\leq \varepsilon_{1} \norm{\nabla u}_{L^{2}(\Omega_{1})}^{2} + C_{\varepsilon_{1}}\norm{u}_{L^{1}(\Omega_{1})}^{2}, \\
		\norm{v}_{L^{2}(\Gamma)}^{2} &\leq \varepsilon_{2} \norm{\nabla v}_{L^{2}(\Omega_{1})}^{2} + C_{\varepsilon_{2}}\norm{v}_{L^{1}(\Omega_{1})}^{2}, \\
		\norm{w}_{L^{2}(\Gamma)}^{2} &\leq \varepsilon_{3} \norm{\nabla w}_{L^{2}(\Omega_{2})}^{2} + C_{\varepsilon_{3}}\norm{w}_{L^{1}(\Omega_{2})}^{2}, \\
		\norm{z}_{L^{2}(\Gamma)}^{2} &\leq \varepsilon_{4} \norm{\nabla z}_{L^{2}(\Omega_{2})}^{2} + C_{\varepsilon_{4}}\norm{z}_{L^{1}(\Omega_{2})}^{2},\\
		\norm{u}_{L^{2}(\Omega_{1})}^{2} &\leq  \varepsilon_{5} \norm{\nabla u}_{L^{2}(\Omega_{1})}^{2} + C_{\varepsilon_{5}} \norm{u}_{L^{1}(\Omega_{1})}^{2},\\
		\norm{z}_{L^{2}(\Omega_{2})}^{2} &\leq \varepsilon_{6} \norm{\nabla z}_{L^{2}(\Omega_{2})}^{2} + C_{\varepsilon_{6}} \norm{z}_{L^{1}(\Omega_{2})}^{2}.
	\end{align*}
    Due to the non-negativity of solutions and the total mass conservation law in \eqref{eqn:MassCon}, we also know that
	\[ \norm{u}_{L^{1}(\Omega_{1})}, \ \norm{v}_{L^{1}(\Omega_{1})}, \ \norm{w}_{L^{1}(\Omega_{2})},\ \norm{z}_{L^{1}(\Omega_{2})} \leq M_{0}. \]
	Therefore, going back to \eqref{eqn:HFunc}, and using the interpolation estimates above, we obtain
	\begin{align*}
		\dv{}{t}\mathcal{H} &\leq - d_{3}\norm{\nabla u}_{L^{2}(\Omega_{1})}^{2} - d_{2}\norm{\nabla v}_{L^{2}(\Omega_{1})}^{2} - d_{3}\norm{\nabla w}_{L^{2}(\Omega_{2})}^{2} - d_{2}\norm{\nabla z}_{L^{2}(\Omega_{2})}^{2} \\
		&\quad +\frac{\kappa}{2}\norm{u}_{L^{2}(\Omega_{1})}^{2} - \frac{\kappa}{2}\norm{v}_{L^{2}(\Omega_{1})}^{2} - \frac{\rho}{2}\norm{w}_{L^{2}(\Omega_{2})}^{2} + \frac{\rho}{2}\norm{z}_{L^{2}(\Omega_{2})}^{2} \\
		&\quad + 2\mu\norm{u}_{L^{2}(\Gamma)}^{2} + 2\nu\norm{v}_{L^{2}(\Gamma)}^{2} + 2\mu\norm{w}_{L^{2}(\Gamma)}^{2} + 2\nu\norm{z}_{L^{2}(\Gamma)}^{2} \\
		&\leq -\qty(\frac{d_{3}}{2} - 2\mu\varepsilon_{1})\norm{\nabla u}_{L^{2}(\Omega_{1})}^{2} - (d_{2} - 2\nu\varepsilon_{2})\norm{\nabla v}_{L^{2}(\Omega_{1})}^{2} \\
		&\quad - (d_{3} - 2\mu\varepsilon_{3})\norm{\nabla w}_{L^{2}(\Omega_{2})}^{2} - \qty(\frac{d_{2}}{2} - 2\nu\varepsilon_{4})\norm{\nabla z}_{L^{2}(\Omega_{2})}^{2} \\
		&\quad - \qty(\frac{d_{3}}{2}\frac{1}{\varepsilon_{5}} - \frac{\kappa}{2})\norm{u}_{L^{2}(\Omega_{1})}^{2} - \frac{\kappa}{2}\norm{v}_{L^{2}(\Omega_{1})}^{2} \\
		&\quad - \frac{\rho}{2}\norm{w}_{L^{2}(\Omega_{2})}^{2} - \qty(\frac{d_{2}}{2} \frac{1}{\varepsilon_{6}} - \frac{\rho}{2})\norm{z}_{L^{2}(\Omega_{2})}^{2} \\
		&\quad + (2\mu C_{\varepsilon_{1}} + C_{\varepsilon_{5}} d_{3}/2) \norm{u}_{L^{1}(\Omega_{1})}^{2} + 2\nu C_{\varepsilon_{2}}\norm{v}_{L^{1}(\Omega_{1})}^{2} \\
		&\quad + 2\mu C_{\varepsilon_{3}}\norm{w}_{L^{1}(\Omega_{2})}^{2} + (2\nu C_{\varepsilon_{4}} + C_{\varepsilon_{6}} d_{2}/2)\norm{z}_{L^{1}(\Omega_{2})}^{2}.
	\end{align*}
	Note that $\frac{d_{3}}{2} - 2\mu\varepsilon_{1} > 0$ provided $\varepsilon_{1} < \frac{d_{3}}{4\mu}$. Thus, choose $0 < \varepsilon_{1} < \frac{d_{3}}{4\mu}$. Analogously, choose $0 < \varepsilon_{2} < \frac{d_{2}}{2\nu}$, $0 < \varepsilon_{3} < \frac{d_{3}}{2\mu}$, $0 < \varepsilon_{4} < \frac{d_{2}}{4\nu}$, $0 < \varepsilon_{5} < \frac{d_{3}}{\kappa}$, and $0 < \varepsilon_{6} < \frac{d_{2}}{\rho}$. Therefore, we have that
	\[ \dv{}{t}\mathcal{H} + a \mathcal{H} \leq b, \]
	where
    \[ a = \min\qty{ \frac{d_{3}}{2}\frac{1}{\varepsilon_{5}} - \frac{\kappa}{2}, \frac{\kappa}{2}, \frac{\rho}{2}, \frac{d_{2}}{2} \frac{1}{\varepsilon_{6}} - \frac{\rho}{2} } > 0, \]
    and
    \[ b = 2M_{0}^{2}(\mu(C_{\varepsilon_{1}} + C_{\varepsilon_{3}}) + \nu(C_{\varepsilon_{2}} + C_{\varepsilon_{4}}) + C_{\varepsilon_{5}}d_{3} + C_{\varepsilon_{6}}d_{2}) > 0, \]
    are both constants. By a classical Gronwall argument, we obtain the estimate
	\[ \mathcal{H}(t) \leq e^{-a t} \mathcal{H}(0) + \frac{b}{a}(e^{-at} - 1). \]
	Since $e^{-at} \leq 1$ for all $t \geq 0$, we have $\norm{\mathbf{c}(t)}_{\mathbf{L}^{2}}^{2} \leq C$ for all $t \geq 0$ where $C = \mathcal{H}(0) > 0$ as desired.
\end{proof}

\begin{theorem}[Regularity]
	If $\Omega_{1}$ and $\Omega_{2}$ are bounded $C^{2}$-domains and $\mathbf{c}_{0} \in \mathbf{H}^{1}$, then the weak solution $\mathbf{c} = (u, v, w, z)$ to system \eqref{eqn:ParabolicModel} in Theorem~\ref{thm:WellposednessParabolicSystem} satisfies the following regularities:
	\begin{align*}
		u, v \in L^{2}(0, T; H^{2}(\Omega_{1})) \cap H^{1}(0, T; L^{2}(\Omega_{1})), \\
		w, z \in L^{2}(0, T; H^{2}(\Omega_{2})) \cap H^{1}(0, T; L^{2}(\Omega_{2})).
	\end{align*}
\end{theorem}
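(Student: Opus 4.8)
The plan is to upgrade the regularity $\mathbf{c}\in L^{2}(0,T;\mathbf{H}^{1})$ already furnished by Theorem~\ref{thm:WellposednessParabolicSystem} in two stages: first establish the time-derivative bound $\dot{\mathbf{c}}\in L^{2}(0,T;\mathbf{L}^{2})$, and then, reading the system at almost every fixed time as a family of elliptic boundary-value problems with $L^{2}$ right-hand sides, invoke classical elliptic regularity on the smooth ball $\Omega_{1}$ and spherical shell $\Omega_{2}$ to reach $L^{2}(0,T;\mathbf{H}^{2})$.

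For the first stage I would work with a Galerkin approximation $\mathbf{c}_{m}$ (as in the existence proof) and test the approximate equation with $\dot{\mathbf{c}}_{m}$. The term $\mathcal{B}(\dot{\mathbf{c}}_{m},\dot{\mathbf{c}}_{m})=\|\dot{\mathbf{c}}_{m}\|_{\mathbf{L}^{2}}^{2}$ is exactly the quantity to be controlled. In $\mathcal{A}(\mathbf{c}_{m},\dot{\mathbf{c}}_{m})$ the principal (diffusion) part reproduces $\tfrac12\frac{d}{dt}$ of the weighted $H^{1}$-seminorms $d_{3}\|\nabla u_{m}\|^{2}+\cdots$, and, crucially, the interface couplings have the favourable structure
\[
\mu(w-u,\dot w-\dot u)_{\Gamma}+\nu(z-v,\dot z-\dot v)_{\Gamma}=\tfrac12\frac{d}{dt}\big(\mu\|w-u\|_{L^{2}(\Gamma)}^{2}+\nu\|z-v\|_{L^{2}(\Gamma)}^{2}\big),
\]
so that they too become exact time derivatives of nonnegative quantities. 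The remaining zero-order couplings $\kappa(v,\dot v-\dot u)_{\Omega_{1}}+\rho(w,\dot w-\dot z)_{\Omega_{2}}$ are not sign-definite, but Young's inequality bounds them by a small multiple of $\|\dot{\mathbf{c}}_{m}\|_{\mathbf{L}^{2}}^{2}$ (absorbed into the left-hand side) plus a constant times $\|\mathbf{c}_{m}\|_{\mathbf{L}^{2}}^{2}$, which is integrable in time by the energy estimate \eqref{eqn:WellposednessEnergyEstimate}. Integrating in $t$ and passing to the limit $m\to\infty$ yields $\dot{\mathbf{c}}\in L^{2}(0,T;\mathbf{L}^{2})$ together with $\mathbf{c}\in L^{\infty}(0,T;\mathbf{H}^{1})$, provided $\mathbf{c}_{0}\in\mathbf{H}^{1}$ so that the seminorms at $t=0$ are finite; for merely $\mathbf{L}^{2}$ data the same computation multiplied by the weight $t$ gives the estimate on every $(\tau,T)$, $\tau>0$, by parabolic smoothing.

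For the second stage I would freeze an admissible time $t$ and interpret each PDE as an elliptic problem with $L^{2}$ data. For instance, $-d_{3}\Delta u=\kappa v-\dot u\in L^{2}(\Omega_{1})$ on the ball $\Omega_{1}$, subject on $\partial\Omega_{1}=\Gamma$ to the Robin condition $d_{3}\partial_{\eta}u+\mu u=\mu\,w|_{\Gamma}$, whose datum $\mu w|_{\Gamma}$ lies in $H^{1/2}(\Gamma)$ since $w(t)\in H^{1}(\Omega_{2})$ has an $H^{1/2}$ trace. Likewise $w$ solves $-d_{3}\Delta w=-\rho w-\dot w\in L^{2}(\Omega_{2})$ on the shell $\Omega_{2}$ with a Robin condition on $\Gamma$ (datum $u|_{\Gamma}\in H^{1/2}(\Gamma)$) and the homogeneous Neumann condition on $\partial\Omega$; analogous statements hold for $v$ and $z$. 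Because $\Omega_{1}$ and $\Omega_{2}$ have smooth boundaries and the two boundary components $\Gamma$ and $\partial\Omega$ of the shell are separated by the positive distance $\delta$ (so no boundary junctions arise), classical elliptic regularity for Neumann/Robin problems (e.g.\ \cite{evans2010} or \cite{grisward2011}) gives $u,v\in H^{2}(\Omega_{1})$ and $w,z\in H^{2}(\Omega_{2})$ with a bound such as
\[
\|u(t)\|_{H^{2}(\Omega_{1})}\le C\big(\|\kappa v(t)-\dot u(t)\|_{L^{2}(\Omega_{1})}+\|w(t)\|_{H^{1}(\Omega_{2})}+\|u(t)\|_{L^{2}(\Omega_{1})}\big).
\]
Squaring, integrating over $(0,T)$, and using $\dot{\mathbf{c}}\in L^{2}(0,T;\mathbf{L}^{2})$ and $\mathbf{c}\in L^{\infty}(0,T;\mathbf{H}^{1})$ renders the right-hand side finite, yielding the claimed $L^{2}(0,T;\mathbf{H}^{2})$ regularity.

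I expect the main obstacle to be the treatment of the interface coupling. Two points need care: verifying at the Galerkin level that the trace terms genuinely assemble into exact time derivatives (which hinges on the interface terms in $\mathcal{A}$ pairing the jump $w-u$, resp.\ $z-v$, with the matching test difference $\varphi_{3}-\varphi_{1}$, resp.\ $\varphi_{4}-\varphi_{2}$), and invoking an elliptic regularity statement that legitimately decouples the interface conditions by treating each neighbouring trace as prescribed $H^{1/2}(\Gamma)$ Robin data rather than as a fully coupled transmission problem. A secondary technical point is the initial-data regularity: the clean $L^{2}(0,T;\mathbf{H}^{2})$ statement down to $t=0$ requires $\mathbf{c}_{0}\in\mathbf{H}^{1}$, and otherwise one must settle for regularity on $(\tau,T)$ obtained via the time-weighted (smoothing) version of the energy estimate.
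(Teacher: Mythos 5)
Your proposal is correct and follows essentially the same route as the paper: the paper's proof of this theorem is a one-line appeal to classical parabolic regularity (Section 9.2 of \cite{salsa2015}), and what you have written out---testing the Galerkin approximation with $\dot{\mathbf{c}}_{m}$, observing that the symmetric interface terms assemble into exact time derivatives of $\tfrac{\mu}{2}\|w-u\|_{L^{2}(\Gamma)}^{2}+\tfrac{\nu}{2}\|z-v\|_{L^{2}(\Gamma)}^{2}$, absorbing the non-symmetric zero-order couplings by Young's inequality, and then reading the system at a.e.\ fixed time as decoupled elliptic Robin problems with $H^{1/2}(\Gamma)$ data---is precisely the classical argument behind that citation, adapted to the interface-coupled structure which the paper does not spell out. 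Your closing caveat is in fact a refinement of the paper's claim rather than a defect of your proof: the theorem is stated for weak solutions whose initial datum is merely $\mathbf{c}_{0}\in\mathbf{L}^{2}$ (the hypothesis of Theorem~\ref{thm:WellposednessParabolicSystem}), whereas the classical result being invoked gives $L^{2}(0,T;H^{2})\cap H^{1}(0,T;L^{2})$ down to $t=0$ only for $\mathbf{H}^{1}$ initial data; with $\mathbf{L}^{2}$ data one obtains, exactly as you note, either the time-weighted (smoothing) estimate or the stated regularity on $(\tau,T)$ for every $\tau>0$.
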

\begin{proof}
	The proof follows from classical arguments since $u_0, v_0 \in H^{1}(\Omega_{1})$ and $w_0, z_0 \in H^{1}(\Omega_{2})$, see for instance \cite[Section 9.4]{salsa2022}.
\end{proof}

We now prove the well-posedness of the corresponding elliptic problem of system \eqref{eqn:ParabolicModel}. Our main result is summarised in the following theorem.
\begin{theorem}[Well-posedness of the Elliptic System] \label{thm:WellposenessElliptic}
	Suppose $\Omega_{1}$ and $\Omega_{2}$ are bounded $C^{4}$-domains. For any given positive total mass $M_{0} > 0$, there exists a unique positive equilibrium $\mathbf{c}_{\infty}(M_0) := (u_{\infty}, v_{\infty}, w_{\infty}, z_{\infty})$ solving the corresponding elliptic system of \eqref{eqn:ParabolicModel}:
	\begin{equation} \label{eqn:EllipticModel1}
		\begin{split}
			& \begin{cases}
				- d_{3} \Delta u_{\infty} = \kappa v_{\infty}, & \quad \ \ x \in \Omega_{1}, \\
				d_{3} \partial_{\eta} u_{\infty} = \mu[w_{\infty} - u_{\infty}], & \quad \ \ x \in \Gamma,\\
				- d_{2} \Delta v_{\infty} = -\kappa v_{\infty}, & \quad \ \ x \in \Omega_{1},\\
				d_{2} \partial_{\eta} v_{\infty} = \nu[z_{\infty} - v_{\infty}], & \quad \ \ x \in \Gamma,\\
			\end{cases} \\
			& \begin{cases}
				- d_{3} \Delta w_{\infty} = - \rho w_{\infty}, & x \in \Omega_{2},\\
				-d_{3} \partial_{\eta} w_{\infty} = -\mu[w_{\infty} - u_{\infty}], & x \in \Gamma, \\
				d_{3} \partial_{\eta} w_{\infty} = 0, & x \in \partial\Omega,\\
				- d_{2} \Delta z_{\infty} = \rho w_{\infty}, & x \in \Omega_{2},\\
				-d_{2} \partial_{\eta} z_{\infty} = -\nu[z_{\infty} - v_{\infty}], & x \in \Gamma, \\
				d_{2} \partial_{\eta} z_{\infty} = 0, & x \in \partial\Omega,
			\end{cases}
		\end{split}
	\end{equation}
	where the total mass $M_0$ corresponding to the total mass conservation law \eqref{eqn:MassCon} of the evolution problem is given by
	\begin{equation} \label{eqn:MassElliptic}
		\int_{\Omega_{1}} [u_{\infty}(x) + v_{\infty}(x)] \dd{x} + \int_{\Omega_{2}} [w_{\infty}(x) + z_{\infty}(x)] \dd{x} = M_{0}.
	\end{equation}
	Moreover, we have
	\[ u_{\infty}, v_{\infty} \in C^{2}(\overline{\Omega}_{1}), \quad w_{\infty}, z_{\infty} \in C^{2}(\overline{\Omega}_{2}), \]
	and for some constants $0 < m \leq M < \infty$,
	\begin{align*}
		0 < m \leq u_{\infty}(x), v_{\infty}(x) \leq M \qq{for all} x \in \overline{\Omega}_{1}, \\
		0 < m \leq w_{\infty}(x), z_{\infty}(x) \leq M \qq{for all} x \in \overline{\Omega}_{2}.
	\end{align*}
\end{theorem}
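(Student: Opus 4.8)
My plan is to recast the elliptic system \eqref{eqn:EllipticModel1} as the weak problem of finding $\mathbf{c}_\infty \in \mathbf{H}^1$ with $\mathcal{A}(\mathbf{c}_\infty, \Phi) = 0$ for all $\Phi \in \mathbf{H}^1$, where $\mathcal{A}$ is the bilinear form \eqref{eqn:BilinearForm2}. Equilibria are then precisely the elements of $\ker A$, where $A \colon \mathbf{H}^1 \to (\mathbf{H}^1)^*$ is the operator induced by $\mathcal{A}$, and the prescribed mass \eqref{eqn:MassElliptic} is what singles out one element of this kernel. The whole difficulty is concentrated in the fact that $A$ is neither symmetric nor coercive, so its kernel cannot be analysed by a naive energy identity.

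First I would exploit the Gårding inequality. Splitting $\mathcal{A}(\mathbf{c},\Phi) = [\mathcal{A}(\mathbf{c},\Phi) + \alpha(\mathbf{c},\Phi)_{\mathbf{L}^2}] - \alpha(\mathbf{c},\Phi)_{\mathbf{L}^2}$, Lemma~\ref{lem:continuity} and Lemma~\ref{lem:GardingIneq} show the bracketed form is continuous and coercive, hence an isomorphism $\mathbf{H}^1 \to (\mathbf{H}^1)^*$ by Lax--Milgram, while $\mathbf{c}\mapsto \alpha(\mathbf{c},\cdot)_{\mathbf{L}^2}$ is compact by the Rellich embedding. Thus $A$ is Fredholm of index zero. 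Testing \eqref{eqn:BilinearForm2} with the constant tuple $\Phi=(1,1,1,1)$ makes every term vanish, so the constants lie in $\ker A^*$; by the Fredholm alternative $\dim\ker A = \dim\ker A^* \geq 1$, producing a nontrivial equilibrium. This is exactly the functional-analytic shadow of the mass conservation law \eqref{eqn:MassCon}.

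The crux is to show that $\ker A$ is exactly one-dimensional and spanned by a strictly positive tuple. Here I would use that the reaction--transport coupling $v\to u\to w\to z\to v$ is irreducible---every species communicates with every other through the two bulk reactions and the two interface fluxes---and that the associated evolution is positivity preserving (consistent with the nonnegativity part of Theorem~\ref{thm:WellposednessParabolicSystem}). Viewing $-A$ as the generator of a positive, irreducible semigroup on $\mathbf{L}^2$ for which $0$ is an eigenvalue (with the constants as a positive left eigenvector, by mass conservation), a Krein--Rutman/Perron--Frobenius argument identifies $0$ as the simple leading eigenvalue with a strictly positive eigenfunction, so $\ker A = \mathbb{R}\,\mathbf{c}_e$ with $\mathbf{c}_e>0$. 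Equivalently and more elementarily, once strict positivity of a nontrivial equilibrium is obtained from the strong maximum principle and Hopf's lemma (applied on each subdomain and propagated across the transmission conditions on $\Gamma$), a sliding argument yields simplicity: for any equilibrium $\mathbf{g}$ the tuple $\mathbf{g}-t\,\mathbf{c}_e$ with $t=\min_i \inf (g_i/c_{e,i})$ is a nonnegative equilibrium attaining the value zero, hence vanishes identically, so $\mathbf{g}=t\,\mathbf{c}_e$. Since the linear mass functional $\mathbf{c}\mapsto \int_{\Omega_1}(u+v)+\int_{\Omega_2}(w+z)$ is strictly positive on $\mathbf{c}_e$, for each $M_0>0$ there is a unique scaling $s>0$ realising total mass $M_0$, giving the unique positive equilibrium $\mathbf{c}_\infty(M_0)=s\,\mathbf{c}_e$.

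Finally, regularity and the two-sided bounds. Each component solves a scalar elliptic equation with a lower-order right-hand side already in $L^2$ and with Robin/Neumann data furnished by traces of the other components; a standard bootstrap---elliptic $H^2$-regularity followed by Schauder estimates, using the smoothness of the spherical interface $\Gamma$ and of the outer boundary $\partial\Omega$---upgrades $\mathbf{c}_\infty$ to $C^2(\overline{\Omega}_1)\times C^2(\overline{\Omega}_1)\times C^2(\overline{\Omega}_2)\times C^2(\overline{\Omega}_2)$. The upper bound $M$ then follows from the maximum principle, since each equation has the form $-d\Delta c + (\text{nonnegative})\,c = (\text{bounded source})$, and the lower bound $m>0$ from the strict positivity above together with continuity on the compact closures. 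The main obstacle I anticipate is precisely this simplicity-with-positivity step: because $\mathcal{A}$ is neither symmetric nor coercive, the one-dimensionality of the kernel cannot be read off from an energy estimate and must instead be extracted from the positivity and irreducibility of the complex-balanced network across the transmission interface.
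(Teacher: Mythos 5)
Your route is genuinely different from the paper's, and its main line is viable. You get a nontrivial equilibrium almost for free: the G\r{a}rding inequality (Lemma~\ref{lem:GardingIneq}) plus Rellich compactness makes $A$ Fredholm of index zero, and since $\mathcal{A}(\mathbf{c},(1,1,1,1))=0$ for every $\mathbf{c}$ (this is exactly the weak form of mass conservation \eqref{eqn:MassCon}), the constants lie in $\ker A^{*}$, so $\dim\ker A\geq 1$. The paper instead constructs a non-negative solution by hand: it sets up the partially decoupled auxiliary system \eqref{eqn:EllipticModel2} whose components can be solved one after another by Lax--Milgram, proves non-negativity by testing with negative parts, upgrades to $\mathbf{H}^{4}$/classical solutions by maximal regularity, obtains strict positivity from the strong maximum principle, rescales to restore the mass $M_{0}$, and closes with a Schauder fixed point; uniqueness then comes from the entropy structure, since the entropy production \eqref{eqn:RelEntproduction} between two equilibria vanishes, forcing the component ratios to be one common constant, which equal masses force to equal one. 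The trade-off is clear: your Fredholm step is much cheaper than the paper's fixed-point construction, but it produces a kernel element with no sign information, so all the real work is displaced into the Perron--Frobenius step, whereas the paper's heavier construction has positivity built in at every stage.

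Two concrete weak points. First, your ``equivalently and more elementarily'' sliding variant is circular as stated: the strong maximum principle and Hopf's lemma can only upgrade a \emph{non-negative} nontrivial equilibrium to a strictly positive one (the propagation across $\Gamma$ does work exactly as you describe), but the Fredholm argument hands you no non-negative equilibrium to start from. Without the semigroup/Krein--Rutman machinery --- or the paper's positivity-preserving iteration --- there is no $\mathbf{c}_{e}>0$ to slide against. Second, in the Krein--Rutman variant the decisive hypotheses are asserted rather than proved: positivity of the semigroup is indeed available from Theorem~\ref{thm:WellposednessParabolicSystem}, but irreducibility across the two subdomains and the interface, and the identification of $0$ as the spectral bound, both require proofs; irreducibility in particular amounts to a parabolic strong-maximum-principle propagation through the coupling graph $v\to u\leftrightarrow w\to z\leftrightarrow v$, which is real work of the same order as the paper's maximum-principle analysis of the auxiliary system. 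If you carry out those two verifications, your argument closes and even yields a slightly stronger conclusion than the paper's (the kernel of $A$ is exactly one-dimensional, not merely that positive equilibria with equal mass coincide); as written, however, the existence of a strictly positive equilibrium --- the crux of the theorem --- is not yet established.
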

\begin{proof}
	The existence of solutions to \eqref{eqn:EllipticModel1} is far from trivial due to the complex-balanced coupling of four equations in two subdomains (recall Figure~\ref{fig:chemfig}b). We are unaware of a general existence theory for such systems; regardless of \eqref{eqn:EllipticModel1} being linear. In the following, we use some ideas from \cite{kfellnerbtang2017} and prove existence via a fixed point argument applied to an iteration of auxiliary solutions. One possible choice of an auxiliary system would preserve the total mass throughout the iterations, yet not the non-negativity of the solutions as such. Therefore, we choose the following partially decoupled auxiliary system, which features non-negative solutions for given non-negative input:
	\begin{equation} \label{eqn:EllipticModel2}
		\begin{split}
			& \begin{cases}
				- d_{3} \Delta u = \kappa v, & \quad \ \ x \in \Omega_{1}, \\
				d_{3} \partial_{\eta} u = \mu[w_{0} - u], & \quad \ \ x \in \Gamma,\\
				- d_{2} \Delta v = -\kappa v, & \quad \ \ x \in \Omega_{1},\\
				d_{2} \partial_{\eta} v = \nu[z_{0} - v], & \quad \ \ x \in \Gamma,\\
			\end{cases} \\
			& \begin{cases}
				- d_{3} \Delta w = - \rho w, & x \in \Omega_{2},\\
				-d_{3} \partial_{\eta} w = -\mu[w - u_{0}], & x \in \Gamma, \\
				d_{3} \partial_{\eta} w = 0, & x \in \partial\Omega,\\
				- d_{2} \Delta z = \rho w, & x \in \Omega_{2},\\
				-d_{2} \partial_{\eta} z = -\nu[z - v_{0}], & x \in \Gamma, \\
				d_{2} \partial_{\eta} z = 0, & x \in \partial\Omega,
			\end{cases}
		\end{split}
	\end{equation}
	where the input functions $\mathbf{c}_{0} = (u_{0}, v_{0}, w_{0}, z_{0}) \in \mathbf{Y}$ are given from the space
	\[ \mathbf{Y} := \{ \mathbf{c} = (u, v, w, z) \in \mathbf{H}^{3}: u, v, w, z \geq 0 \}. \]
	Note that the solution $\mathbf{c} = (u, v, w, z)$ cannot be expected  to have the same total mass as the previous iteration $\mathbf{c}_{0} = (u_{0}, v_{0}, w_{0}, z_{0})$. The solution $\mathbf{c}$ will even be zero if the trace of $\mathbf{c}_{0}$ vanishes at $\Gamma$.

	In order to prove the existence of solutions to \eqref{eqn:EllipticModel1} with positive total mass $M_0$ as fixed point of an iteration scheme, we therefore show positivity of the solutions $\mathbf{c}$ subject to positive $\mathbf{c}_{0}$ by using the strong maximum principle, which requires higher regularity and classical solutions. In particular, we choose, as an initial approximation, $\mathbf{c}_{0}$ to be positive, piecewise constant functions in the two subdomains such that for any $M_{0} > 0$, the total mass conservation law holds,
	\[ \int_{\Omega_{1}} [u_{0} + v_{0}] \dd{x} + \int_{\Omega_{2}} [w_{0} + z_{0}] \dd{x} = M_{0} > 0. \]

	The existence and uniqueness of a non-negative solution $\mathbf{c}$ to the auxiliary system \eqref{eqn:EllipticModel2} is shown as follows. Let us first consider the PDE for $v$ which is independent of the unknown $u$ in $\Omega_{1}$:
	\[ -d_{2} \Delta v + \kappa v = 0 \qq{in} \Omega_{1}, \quad d_{2} \partial_{\eta}v + \nu v = \nu z_{0} \qq{on} \Gamma. \]
	The Lax-Milgram theorem guarantees the existence and uniqueness of the weak solution $v \in H^{1}(\Omega_{1})$ to the variational problem: given $z_{0} \in H^{3}(\Omega_{2})$, find $v \in H^{1}(\Omega_{1})$ such that
	\[ \int_{\Omega_{1}} d_{2} \nabla v \cdot \nabla \varphi_{2} \dd{x} + \int_{\Gamma} \nu v \varphi_{2} \dd{s} + \int_{\Omega_{1}} \kappa v \varphi_{2} \dd{x} = \int_{\Gamma} \nu z_{0} \varphi_{2} \dd{s}, \]
	for all test functions $\varphi_{2} \in H^{1}(\Omega_{1})$. To show non-negativity given $z_{0} \geq 0$, observe that using $\varphi_{2} = v^{-} := -\min\{v, 0\}$, the negative part of $v$ as test function, and noting that $v v^{-} = -(v^{-})^{2}$ and $\nabla v \cdot \nabla v^{-} = - \chi_{\{v < 0\}} \abs{\nabla v}^{2}$, we obtain
	\[ -d_{2} \int_{\Omega_{1}} \chi_{\{v \leq 0\}} |\nabla v|^{2} \dd{x} - \kappa \int_{\Omega_{1}} (v^{-})^{2} \dd{x} - \nu \int_{\Gamma} (v^{-})^{2} \dd{s} = \int_{\Gamma} z_{0} v^{-} \dd{s}. \]
	Since the left-hand side is non-positive and the right-hand side is non-negative, then both sides are equal to zero and thus, $\int_{\Omega_{1}} (v^{-})^{2} \dd{x} = 0$ and $v \geq 0$ is non-negative in $\Omega_{1}$. Since $z_{0} \in H^{3 - \frac{1}{2}}(\Gamma) = H^{2 + \frac{1}{2}}(\Gamma)$ and $\Omega_{1}$ is a bounded $C^{4}$-domain, by maximal regularity for elliptic systems \cite[Theorem 8.31]{salsa2022}, we have that $v \in H^{4}(\Omega_{1})$ and the stability estimate
	\[ \norm{v}_{H^{4}(\Omega_{1})} \leq C \norm{z_{0}}_{H^{3}(\Omega_{2})}, \]
	for some constant $C > 0$ holds. By Sobolev Embedding Theorem for $n=2,3$, we get $v \in C^{2,\alpha}(\overline{\Omega}_{1})$ for some $\alpha \in (0, 1)$, and thus, $v \in C^{2}(\overline{\Omega}_{1})$.
	Hence, the strong maximum principle \cite{pao1992} implies that $v$ is strictly positive on $\overline{\Omega}_{1}$ whenever $z_0$ is strictly positive on $\Gamma$.

	With this unique positive solution $v$, we now consider the PDE for $u$:
	\[ - d_{3} \Delta u = \kappa v \qq{in} \Omega_{1}, \qquad d_{3} \partial_{\eta} u = \mu[w_{0} - u] \qq{on} \Gamma. \]
	As before, the Lax-Milgram Theorem guarantees the existence and uniqueness of a weak solution $u \in H^{1}(\Omega_{1})$ to the variational problem: find $u \in H^{1}(\Omega_{1})$ such that
	\[ \int_{\Omega_{1}} d_{3} \nabla u \cdot \nabla \varphi_{1} \dd{x} + \int_{\Gamma} \mu u \varphi_{1} \dd{s} = \int_{\Omega_{1}} \kappa v \varphi_{1} \dd{x} + \int_{\Gamma} \mu w_{0} \varphi_{1} \dd{s}, \]
	for all test functions $\varphi_{1} \in H^{1}(\Omega_{1})$. Since $v \in H^{4}(\Omega_{1}) \subset H^{2}(\Omega_{1})$ and $w_{0} \in H^{3 - \frac{1}{2}}(\Gamma) = H^{2 + \frac{1}{2}}(\Gamma)$, by maximal regularity, $u \in H^{4}(\Omega_{1})$ and
	\[ \norm{u}_{H^{4}(\Omega_{1})} \leq C\qty{ \norm{w_{0}}_{H^{3}(\Omega_{2})} + \norm{z_{0}}_{H^{3}(\Omega_{2})} }, \]
	for some constant $C > 0$. Hence, following the same embedding arguments as for $v$, we get $u \in C^{2}(\overline{\Omega}_{1})$. To prove non-negativity of the unique solution we choose $\varphi_{1} = u^{-}$ as test function and obtain
	\[ -\int_{\Omega_{1}} \chi_{\{u < 0\}} d_{3} \abs{\nabla u}^{2} \dd{x} - \int_{\Gamma} \mu (u^{-})^{2} \dd{s} = \int_{\Omega_{1}} \kappa v u^{-} \dd{x} + \int_{\Gamma} \mu w_{0} u^{-}. \]
	Note that the left-hand side is again non-positive while the right-hand side is non-negative since $v, w_{0} \geq 0$. Hence, both sides must be equal to zero and as a consequence, $\int_{\Omega_{1}} \chi_{\{u < 0\}} \abs{\nabla u}^{2} \dd{x} = 0$ and $\int_{\Gamma} \mu (u^{-})^{2} \dd{s} = 0$, which implies that $u^{-}$ is constant in $\Omega_{1}$ and $u^{-} = 0$ on $\Gamma$. By continuity (up to the boundary), we have that $u^{-} = 0$ in $\Omega_{1}$, and hence, $u \geq 0$ in $\Omega_{1}$. Finally, the strong maximum principles implies that $u$ is strictly positive in $\overline{\Omega}_{1}$ provided that either $v$ is strictly positive in $\Omega_{1}$ or $w_0$ is strictly positive on $\Gamma$.

	Following analogous arguments for $w$ and $z$ in $\Omega_{2}$, we therefore obtain a non-negative weak solution $\mathbf{c} = (u,v,w,z) \in \mathbf{H}^{3}$ to the auxiliary system \eqref{eqn:EllipticModel2}, and in addition, due to maximal regularity, we have $\mathbf{c} \in \mathbf{H}^{4}$ and the \textit{a priori} estimate
	\begin{equation} \label{eqn:Estimate1}
		\norm{\mathbf{c}}_{\mathbf{H}^{4}} \leq C \norm{\mathbf{c}_{0}}_{\mathbf{H}^{3}},
	\end{equation}
	holds for some constant $C > 0$. By Sobolev embeddings for $n=2,3$, we have that $\mathbf{c}$ is a classical solution to the auxiliary system \eqref{eqn:EllipticModel2}. Finally, by the strong maximum principle, strictly positive input functions $\mathbf{c}_0$ yield a strictly positive solution $\mathbf{c}$.

	However, we cannot expect the solution $\mathbf{c}$ to represent the same total mass $M_0 > 0$ as the initial guess $\mathbf{c}_{0}$. Nevertheless, the positivity of $\mathbf{c}$ implies that
	\[ \int_{\Omega_{1}} [u + v] \dd{x} + \int_{\Omega_{2}} [w + z] \dd{x} = M_{1}>0, \]
	for some $M_{1} > 0$. We can now rescale the input functions $\mathbf{c}_{0}$ and the solutions to \eqref{eqn:EllipticModel2} by choosing a multiplier $\lambda = M_{0} / M_{1}> 0$: by replacing $\mathbf{c}_{0}$ with $\lambda \mathbf{c}_0$ as input in the auxiliary system \eqref{eqn:EllipticModel2}, we obtain (by linearity) $\lambda$-times the previous solution. Hence, the rescaled solution $\mathbf{c} \in \mathbf{Y}$ contains the desired total mass
	\[ \int_{\Omega_{1}} [u + v] \dd{x} + \int_{\Omega_{2}} [w + z] \dd{x} = M_{0}. \]

	Define the mapping $\mathcal{I}: \mathbf{Y} \to \mathbf{Y}$ by $\mathcal{I}\mathbf{c}_{0} = \mathbf{c}$. Due to the \textit{a priori} estimate \eqref{eqn:Estimate1}, $\mathcal{I}$ is a compact mapping. By the Schauder fixed point theorem, there exists a fixed point denoted by $\mathbf{c}_{\infty} := (u_{\infty}, v_{\infty}, w_{\infty}, z_{\infty})$ of the mapping $\mathcal{I}$, and this fixed point is thus the non-negative weak solution of the elliptic system \eqref{eqn:EllipticModel1} with total mass $M_0$. The equilibrium solution $\mathbf{c}_{\infty}$ enjoys higher regularity due to the maximal regularity, that is, $\mathbf{c}_{\infty} \in \mathbf{H}^{4}$. Thus, as above, the Sobolev Embedding Theorem implies that for $n=2, 3$, we obtain
	\[ u_{\infty}, v_{\infty} \in C^{2}(\overline{\Omega}_{1}) \qq{and} w_{\infty}, z_{\infty} \in C^{2}(\overline{\Omega}_{2}). \]
	Due to strong maximum principle, we also conclude that given $M_{0} > 0$, we have that $\mathbf{c}_{\infty} > 0$. In case $M_{0} = 0$, we recover the trivial solution $\mathbf{c}_{\infty} = 0$.

	The upper bound $M > 0$ for the equilibrium solutions is due to the continuity of the functions and the compactness of their corresponding domains. The positive lower bound $m >0$ follows from the strong maximum principle like for to the solutions of the auxiliary system \eqref{eqn:EllipticModel2}.

	Finally, we prove the uniqueness of the equilibrium solution to system \eqref{eqn:EllipticModel1} via the entropy structure of the parabolic system \eqref{eqn:ParabolicModel}. From Appendix~\ref{sec:A1}, recall the \textit{relative entropy functional} $\mathcal{E}$ in equation~\eqref{eqn:RelativeEntropy} and the \textit{entropy-production functional} $\mathcal{D}$ in equation~\eqref{eqn:EntropyProduction}. Let $\mathbf{c}_{\infty}^{(1)} := (u_{\infty}^{(1)}, v_{\infty}^{(1)}, w_{\infty}^{(1)}, z_{\infty}^{(1)})$ and $\mathbf{c}_{\infty}^{(2)} := (u_{\infty}^{(2)}, v_{\infty}^{(2)}, w_{\infty}^{(2)}, z_{\infty}^{(2)})$ be two solutions of the stationary state system \eqref{eqn:EllipticModel1}. By being equilibrium solutions, we see that in Lemma~\ref{lem:EntropyProductionFunc}, we have $\mathcal{D}(\mathbf{c}_{\infty}^{(1)}|\mathbf{c}_{\infty}^{(2)}) = -\dv{t} \mathcal{E}(\mathbf{c}_{\infty}^{(1)}|\mathbf{c}_{\infty}^{(2)}) = 0$ independent of time. Thus, from the entropy-production functional \eqref{eqn:EntropyProduction}, we obtain
	\[ \frac{u_{\infty}^{(1)}}{u_{\infty}^{(2)}} \equiv \frac{v_{\infty}^{(1)}}{v_{\infty}^{(2)}} \equiv \frac{w_{\infty}^{(1)}}{w_{\infty}^{(2)}} \equiv \frac{z_{\infty}^{(1)}}{z_{\infty}^{(2)}} \equiv k, \]
	for some constant $0 \ne k \in \mathbb{R}$. If the equilibrium solutions $\mathbf{c}_{\infty}^{(1)}$ and $\mathbf{c}_{\infty}^{(2)}$ satisfy the same total mass conservation law, then we see that $ k \equiv 1$ and thus, $ \mathbf{c}_{\infty}^{(1)} \equiv \mathbf{c}_{\infty}^{(2)} $, which proves uniqueness of the equilibrium solution for a fixed total mass.
\end{proof}

We end this section with the following property of the global weak solution of system \eqref{eqn:ParabolicModel}.
\begin{theorem}[Exponential Convergence to Equilibrium] \label{thm:ExponentialConvergence}
	Under the assumptions of Theorem~\ref{thm:WellposenessElliptic}, we further conclude that every global weak solution $\mathbf{c} = (u,v,w,z)$ to system \eqref{eqn:ParabolicModel} with given fixed initial total mass $M_{0} > 0$ converges exponentially to $\mathbf{c}_{\infty} = (u_{\infty}, v_{\infty}, w_{\infty}, z_{\infty})$ in the sense that
	\[ \begin{split}
		&\int_{\Omega_{1}} \qty[ \frac{|u - u_{\infty}|^{2}}{u_{\infty}} + \frac{|v - v_{\infty}|^{2}}{v_{\infty}}] \dd{x} + \int_{\Omega_{2}} \qty[ \frac{|w - w_{\infty}|^{2}}{w_{\infty}} + \frac{|z - z_{\infty}|^{2}}{z_{\infty}} ] \dd{x} \\
        &\qquad \qquad \leq C \exp(-\lambda t),
	\end{split} \]
	for all $t > 0$, where the constants $\lambda > 0$ and $C > 0$ can be explicitly computed.
\end{theorem}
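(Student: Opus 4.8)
The plan is to recognise the left-hand side as a shifted relative entropy and then run the standard entropy method: reduce everything to an entropy entropy-production (EEP) inequality and close with Gr\"onwall's lemma. First I would rewrite the quantity to be estimated. Expanding $\frac{|u-u_{\infty}|^{2}}{u_{\infty}} = \frac{u^{2}}{u_{\infty}} - 2u + u_{\infty}$ and summing the four analogous terms, the total mass conservation law \eqref{eqn:MassCon} applied to both $\mathbf{c}(t)$ and $\mathbf{c}_{\infty}$ (which share the mass $M_{0}$ by \eqref{eqn:MassElliptic}) yields
\[
	E(t) := \int_{\Omega_{1}}\qty[\frac{|u-u_{\infty}|^{2}}{u_{\infty}} + \frac{|v-v_{\infty}|^{2}}{v_{\infty}}]\dd{x} + \int_{\Omega_{2}}\qty[\frac{|w-w_{\infty}|^{2}}{w_{\infty}} + \frac{|z-z_{\infty}|^{2}}{z_{\infty}}]\dd{x} = \mathcal{E}(\mathbf{c}|\mathbf{c}_{\infty}) - M_{0}.
\]
Since $M_{0}$ is constant and $\mathbf{c}_{\infty}$ is a time-independent solution of \eqref{eqn:ParabolicModel}, the entropy-production identity \eqref{eqn:RelEntproduction} applies with $\mathbf{c}_{1} = \mathbf{c}$ and $\mathbf{c}_{2} = \mathbf{c}_{\infty}$, giving $\dv{}{t}E(t) = -\mathcal{D}(\mathbf{c}|\mathbf{c}_{\infty}) \leq 0$; here the positivity bounds $0 < m \leq \mathbf{c}_{\infty} \leq M$ from Theorem~\ref{thm:WellposenessElliptic} ensure all integrands are well defined and the $H^{2}$-regularity justifies the integration by parts behind \eqref{eqn:RelEntproduction}. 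The theorem then reduces to an EEP inequality $\mathcal{D}(\mathbf{c}|\mathbf{c}_{\infty}) \geq \lambda E(t)$ for an explicit $\lambda > 0$, after which Gr\"onwall's lemma gives $E(t) \leq E(0)e^{-\lambda t}$, i.e.\ the claim with $C = E(0) = \mathcal{E}(\mathbf{c}_{0}|\mathbf{c}_{\infty}) - M_{0} < \infty$ (finite since $\mathbf{c}_{0} \in \mathbf{L}^{2}$ and $\mathbf{c}_{\infty} \geq m$).

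The core of the proof is therefore the EEP inequality. I would introduce the relative concentrations $U = u/u_{\infty}$, $V = v/v_{\infty}$, $W = w/w_{\infty}$, $Z = z/z_{\infty}$, so that $E(t) = \int_{\Omega_{1}}[u_{\infty}(U-1)^{2} + v_{\infty}(V-1)^{2}] + \int_{\Omega_{2}}[w_{\infty}(W-1)^{2} + z_{\infty}(Z-1)^{2}]$, while $\mathcal{D}$ is the weighted Dirichlet form in \eqref{eqn:RelEntproduction} built from the gradients $\norm{\nabla U}_{L^{2}(\Omega_{1})}^{2},\dots$, the bulk reaction differences $|U-V|^{2}$, $|W-Z|^{2}$, and the interface differences $|U-W|^{2}$, $|V-Z|^{2}$. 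Using $m \leq \mathbf{c}_{\infty}$, the gradient terms dominate $\norm{\nabla U}_{L^{2}(\Omega_{1})}^{2}$ and its analogues, so the Poincar\'e--Wirtinger inequality on the connected domains $\Omega_{1}, \Omega_{2}$ controls the oscillations $\norm{U - \overline{U}}_{L^{2}(\Omega_{1})}^{2}$ around the means $\overline{U}, \overline{V}, \overline{W}, \overline{Z}$. The reaction terms then bound $|\overline{U}-\overline{V}|^{2}$ and $|\overline{W}-\overline{Z}|^{2}$, while the interface terms, combined with the Trace Theorem and the oscillation bounds, bound $|\overline{U}-\overline{W}|^{2}$ and $|\overline{V}-\overline{Z}|^{2}$. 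Since the resulting graph of couplings $U$--$V$--$Z$--$W$--$U$ is connected, all four means are pinned to a single common value $\overline{c}$ up to terms of order $\mathcal{D}$; finally the conservation constraint $\int_{\Omega_{1}}[u_{\infty}(U-1)+v_{\infty}(V-1)] + \int_{\Omega_{2}}[w_{\infty}(W-1)+z_{\infty}(Z-1)] = 0$ forces $|\overline{c}-1|^{2} \lesssim \mathcal{D}$. Assembling these estimates with Young's inequality yields $E(t) \lesssim \mathcal{D}(\mathbf{c}|\mathbf{c}_{\infty})$ with an explicitly computable constant, which is exactly the desired EEP inequality.

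The hard part will be this EEP inequality, and within it the interface coupling. Unlike the single-domain case, the two species families live on distinct subdomains and communicate only through the reaction terms inside each subdomain and the trace terms on $\Gamma$; one must verify that these couplings form a connected graph so that the single conserved mass is the only obstruction preventing coercivity, and only then can $\mathcal{D}$ control the full $E$. Keeping $\lambda$ explicit requires carefully tracking the Poincar\'e, trace, and Young constants through the chain of estimates, and the non-constant weights $u_{\infty}, v_{\infty}, w_{\infty}, z_{\infty}$ add a mild technical layer that is absorbed throughout by the uniform bounds $0 < m \leq \mathbf{c}_{\infty} \leq M$.
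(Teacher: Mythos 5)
Your proposal is correct and takes essentially the same route as the paper: your shifted functional $E(t)=\mathcal{E}(\mathbf{c}|\mathbf{c}_{\infty})-M_{0}$ coincides (by expanding the square and mass conservation) with the paper's $\mathcal{E}(\mathbf{c}-\mathbf{c}_{\infty}|\mathbf{c}_{\infty})$, and both arguments differentiate it via the entropy-production identity \eqref{eqn:RelEntproduction}, invoke an entropy entropy-production inequality $\mathcal{D}\geq\lambda\mathcal{E}$, and conclude with Gr\"onwall. The only difference is that the paper obtains the EEP inequality by citing Lemma~2.4 of \cite{kfellnerbtang2017}, whereas you sketch its proof directly (Poincar\'e and trace estimates, connectivity of the reaction--interface coupling graph, and the mass-conservation constraint), which is precisely the argument underlying the cited lemma.
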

\begin{proof}
	By definition \eqref{eqn:RelativeEntropy} and \eqref{eqn:EntropyProduction}, we have $\dv{}{t} \mathcal{E}(\mathbf{c} - \mathbf{c}_{\infty}|\mathbf{c}_{\infty}) + \mathcal{D}(\mathbf{c} - \mathbf{c}_{\infty}|\mathbf{c}_{\infty}) = 0$. We use the result in Lemma 2.4 of \cite{kfellnerbtang2017}: given a fixed total mass $M_{0} > 0$, the following entropy entropy-production estimate
	\begin{equation} \label{eqn:Entropyproduction}
		\mathcal{D}(\mathbf{c} - \mathbf{c}_{\infty}|\mathbf{c}_{\infty}) \geq \lambda \mathcal{E}(\mathbf{c} - \mathbf{c}_{\infty}|\mathbf{c}_{\infty}),
	\end{equation}
	holds for any non-negative measurable functions $\mathbf{c} = (u,v,w,z)$ satisfying the mass conservation law \eqref{eqn:MassCon}, where $\mathbf{c}_{\infty} = (u_{\infty}, v_{\infty}, w_{\infty}, z_{\infty} )$ is as defined in Theorem~\ref{thm:WellposenessElliptic}, and the constant $\lambda$ can be explicitly estimated. With estimate \eqref{eqn:EntropyProduction}, we have that
	\[ \dv{}{t} \mathcal{E}(\mathbf{c} - \mathbf{c}_{\infty}|\mathbf{c}_{\infty}) + \lambda\mathcal{E}(\mathbf{c} - \mathbf{c}_{\infty}|\mathbf{c}_{\infty}) \leq 0, \]
	and by a classical Gronwall argument, we get $\mathcal{E}(\mathbf{c} - \mathbf{c}_{\infty}|\mathbf{c}_{\infty}) \leq C \exp(-\lambda t)$ as desired.
\end{proof}

\section{Numerical Simulations} \label{sec:numerical_analysis}

We first consider the equilibrium system \eqref{eqn:EllipticModel1}. For convenience, we work only in two dimensions ($d=2$), however, one can easily extend it to three dimensions. We then consider the variational problem: find $\mathbf{c} \in \mathbf{H}^{1}$ such that
\begin{equation} \label{eqn:EllipticVarForm}
	\mathcal{A}(\mathbf{c}, \Phi) = 0 \qq{for all test functions} \Phi \in \mathbf{H}^{1},
\end{equation}
where the bilinear form is given by
\begin{align}
	\mathcal{A}(\mathbf{c}, \Phi) &:= d_{3} (\nabla u, \nabla \varphi_{1})_{\Omega_{1}} + d_{2} (\nabla v, \nabla \varphi_{2})_{\Omega_{1}} + d_{3} (\nabla w, \nabla \varphi_{3})_{\Omega_{2}} \notag \\
	&+ d_{2} (\nabla z, \nabla \varphi_{4})_{\Omega_{2}} + \kappa (v, \varphi_{2} - \varphi_{1})_{\Omega_{1}}  + \rho (w, \varphi_{3} - \varphi_{4})_{\Omega_{2}} \notag \\
	&+ \mu(w - u, \varphi_{3} - \varphi_{1})_{\Gamma} + \nu(z - v, \varphi_{4} - \varphi_{2})_{\Gamma}, \label{eqn:BilinearFormElliptic}
\end{align}
and such that, in addition, the total mass conservation law in \eqref{eqn:MassCon} is satisfied. The bilinear form \eqref{eqn:BilinearFormElliptic} is the same as the bilinear form in \eqref{eqn:BilinearForm2} yet without time dependence as we are considering an equilibrium state. Note that the existence and uniqueness of the nontrivial positive weak solution of \eqref{eqn:EllipticVarForm} is already established in Theorem~\ref{thm:WellposenessElliptic}. We remark again that the bilinear form $\mathcal{A}$ in \eqref{eqn:BilinearFormElliptic} is continuous and not coercive, however, it satisfies the G\r{a}rding inequality \eqref{eqn:GardingIneq} in Lemma \ref{lem:GardingIneq}.

Following the framework in \cite[Chapter 2]{QuarteroniValli1999}, let $\Omega_{1,h}$ and $\Omega_{2,h}$ be polygonal approximations of the subdomains $\Omega_{1}$ and $\Omega_{2}$, respectively, sharing the same polygonal interface $\Gamma_{h} := \partial\Omega_{1,h} \cap \partial\Omega_{2,h}$. Let $\mathcal{T}_{1,h}$ and $\mathcal{T}_{2,h}$ be shape-regular triangulations of $\Omega_{1,h}$ and $\Omega_{2,h}$, respectively. We assume that the two triangulations are constructed such that they match along $\Gamma_h$, that is, the nodes and edges of $\mathcal{T}_{1,h}$ and $\mathcal{T}_{2,h}$ coincide at $\Gamma_{h}$. Thus, consider $\mathcal{T}_{h} := \mathcal{T}_{1,h} \cup \mathcal{T}_{2,h}$ as a conforming shape-regular triangulation of $\Omega_{h} := \overline{\Omega}_{1,h} \cup \Omega_{2,h}$. We also assume that $\Gamma_{h} \subset \Omega_{h}$. The mesh size is denoted by $h = \max \{ h_{T} : T \in \mathcal{T}_{h} \}$. Finally, define the finite dimensional product space $\mathbf{V}_{h}^{k} := V_{1,h}^{k} \times V_{1,h}^{k} \times V_{2,h}^{k} \times V_{2,h}^{k}$ where
\begin{align*}
	V_{1,h}^{k} &:= \{ \varphi_{1} \in C(\Omega_{1,h}) : \varphi_{1}|_{T} \in \mathbb{P}^{k}(T) \qq{for all} T \in \mathcal{T}_{1,h} \}, \\
	V_{2,h}^{k} &:= \{ \varphi_{2} \in C(\Omega_{2,h}) : \varphi_{2}|_{T} \in \mathbb{P}^{k}(T) \qq{for all} T \in \mathcal{T}_{2,h} \},
\end{align*}
and $\mathbb{P}^{k}$ denotes the space of polynomial functions of degree less than or equal to $k \in \mathbb{N}$.

\begin{remark}[Variational Crimes]\label{rem:VarCrim}
	In the \texttt{FEniCS} implementation, we approximate the domains $\Omega_{1}$ and $\Omega_{2}$ by $\Omega_{1,h}$ and $\Omega_{2,h}$, respectively, where in particular, $\Omega_{2,h} \not\subset \Omega_{2}$ (see Figure~\ref{fig:var_crime}). Moreover, we commit another variational crime by using quadrature rules for the integrals.
\end{remark}
\begin{figure}[htbp]
	\centering
	\includegraphics[width = 1\textwidth]{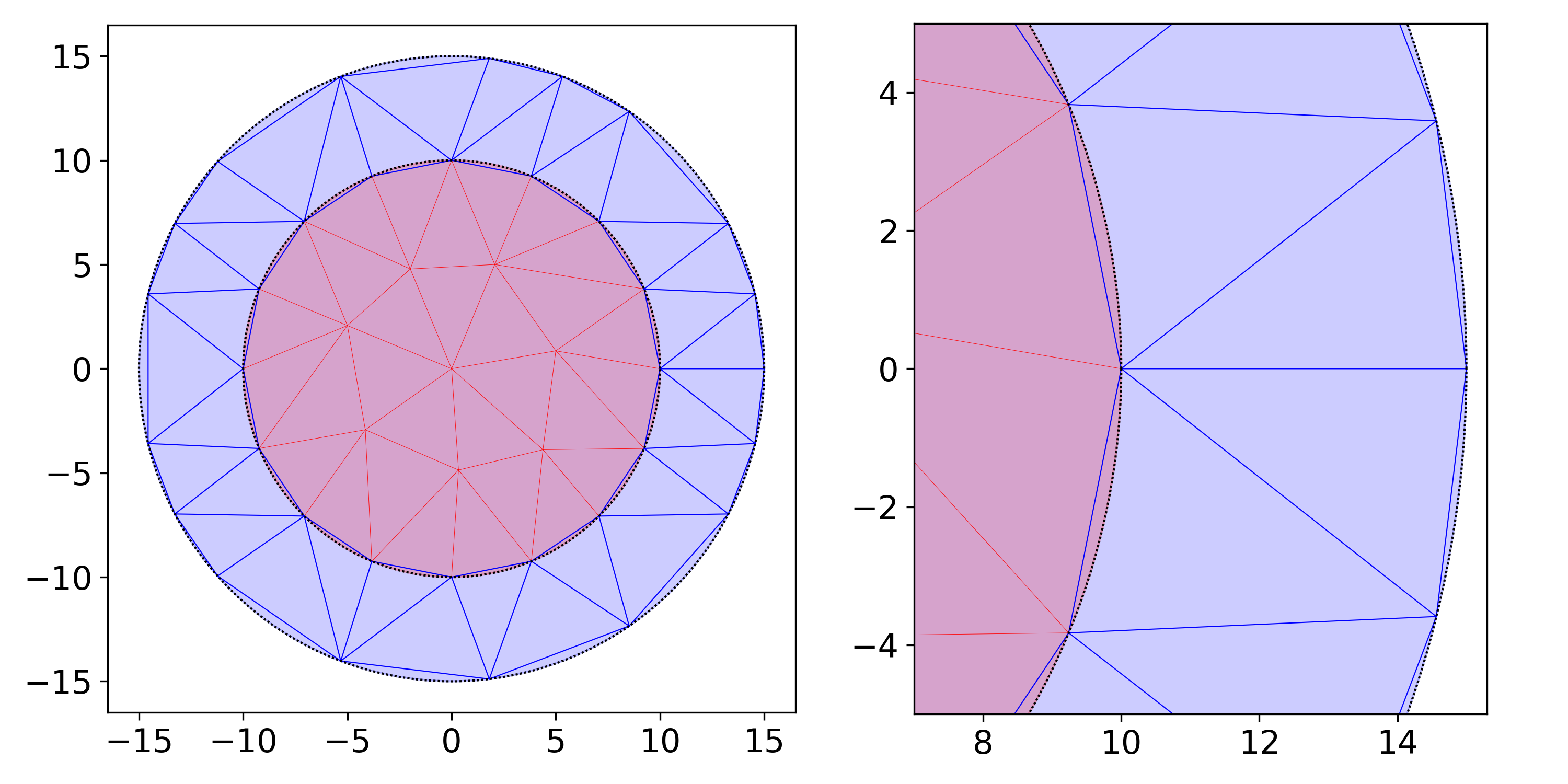}
	\caption{Illustration that $\Omega_{2,h} \not \subset \Omega_{2}$ due to the nonconvexity of the active region $\Omega_{2}$. Here, we set $R = 10$ and $\delta = 5$}
	\label{fig:var_crime}
\end{figure}

For $\mathbf{c}_{h} := (u_{h}, v_{h}, w_{h}, z_{h}) \in \mathbf{V}_{h}^{k}$, we use the norm
\[ \norm{\mathbf{c}_{h}}_{\mathbf{V}_{h}^{k}} = \qty(\norm{u_{h}}_{H^{1}(\Omega_{1,h})}^{2} + \norm{v_{h}}_{H^{1}(\Omega_{1,h})}^{2} + \norm{w_{h}}_{H^{1}(\Omega_{2,h})}^{2} + \norm{z_{h}}_{H^{1}(\Omega_{2,h})}^{2})^{\frac{1}{2}}. \]
Consider the following discrete variational problem corresponding to \eqref{eqn:EllipticVarForm}: find $\mathbf{c}_{h} \in \mathbf{V}_{h}^{k}$ such that
\begin{align}
	\mathcal{A}_{h}(\mathbf{c}_{h}, \Phi) &:= d_{3} (\nabla u_{h}, \nabla \varphi_{1})_{\Omega_{1,h}} + d_{2} (\nabla v_{h}, \nabla \varphi_{2})_{\Omega_{1,h}} \notag \\
	&+ d_{3} (\nabla w_{h}, \nabla \varphi_{3})_{\Omega_{2,h}} + d_{2} (\nabla z_{h}, \nabla \varphi_{4})_{\Omega_{2,h}} \notag \\
	&+ \kappa (v_{h}, \varphi_{2} - \varphi_{1})_{\Omega_{1,h}} + \rho(w_{h}, \varphi_{3} - \varphi_{4})_{\Omega_{2,h}} \notag \\
	&+ \mu(w_{h} - u_{h}, \varphi_{3} - \varphi_{1})_{\Gamma_{h}} + \nu(z_{h} - v_{h}, \varphi_{4} - \varphi_{2})_{\Gamma_{h}} \notag \\
	& = 0 \qq{for all test functions} \Phi \in \mathbf{V}_{h}^{k}, \label{eqn:VarProb_Elliptic_Discrete}
\end{align}
and in addition, the discrete total mass conservation law
\begin{equation} \label{eqn:Discrete_MassCon}
	\int_{\Omega_{1,h}} (u_{h} + v_{h}) \dd{x} + \int_{\Omega_{2,h}} (w_{h} + z_{h}) \dd{x} = M_{0,h},
\end{equation}
holds, where $M_{0,h} = \int_{\Omega_{1,h}} (u_{0,h} + v_{0,h}) \dd{x} + \int_{\Omega_{2,h}} (w_{0,h} + z_{0,h}) \dd{x} > 0$ is the given initial total mass of the discrete system. Observe that $\mathcal{A}_{h}$ is the same bilinear form as in the bilinear form $\mathcal{A}$ in \eqref{eqn:BilinearFormElliptic}, except that it acts on a different function space.

In order to have a comparison to the numerical solutions, we first compute for the explicit radially symmetric solutions of the eqilibrium system \eqref{eqn:EllipticModel1}. For functions depending only on the radius $r$, with $r^{2}=x^{2}+y^{2}$ for all $(x, y) \in \Omega$, the Laplace operator reduces to $\Delta = \dv[2]{}{r} + \frac{1}{r} \dv{}{r}$. Therefore, the elliptic system \eqref{eqn:EllipticModel1} is transformed into the following second order system of boundary value problems
\begin{equation} \label{eqn:ExactSolutions}
	\begin{split}
		&\begin{cases}
			d_{3}(u'' + \frac{1}{r} u') = -\kappa v, & r \in (0, R), \\
			d_{3}u'(R) = \mu[w(R) - u(R)], & \\
			d_{2}(v'' + \frac{1}{r} v') = \kappa v, & r \in (0, R), \\
			d_{2}v'(R) = \nu[z(R) - v(R)],
		\end{cases} \\
		&\begin{cases}
			d_{3}(w'' + \frac{1}{r} w') = \rho w, & r \in (R, R+\delta), \\
			d_{3}w'(R) = \mu[w(R) - u(R)], & d_{3}w'(R+\delta) = 0, \\
			d_{2}(z'' + \frac{1}{r}z') = -\rho w, & r \in (R, R+\delta), \\
			d_{2}z'(R) = \nu[z(R) - v(R)], & d_{2}z'(R+\delta) = 0,
		\end{cases}
	\end{split}
\end{equation}
where $' = \dv{}{r}$. Note that $u, v : (0,R) \to \mathbb{R}^{+}$ and $w, z: (R, R+\delta) \to \mathbb{R}^{+}$ due to symmetry. Since the system parameters are assumed to be positive constants, we expect to obtain radially symmetric solutions, however, this is not always the case for nonconstant parameters. This is also the reason behind the numerical simulations being implemented in two dimensions in preparation for the nonhomogeneous parameters.
\begin{theorem}[Radially Symmetric Solutions]
	The solutions of the boundary value problem \eqref{eqn:ExactSolutions} are given by
	\begin{align}
		& v(r) = k_{1} I_{0}(\alpha r), && u(r) = \frac{C_{1}}{d_{3}} - \frac{d_{2}}{d_{3}}v(r), \label{eqn:ExactSolutions_uv} \\
		& w(r) = k_{2} I_{0}(\beta r) + k_{3} K_{0}(\beta r), && z(r) = \frac{C_{2}}{d_{2}} - \frac{d_{3}}{d_{2}}w(\beta r), \label{eqn:ExactSolutions_wz}
	\end{align}
	where $\alpha^{2} = \kappa/d_{2}$, $\beta^{2} = \rho /d_{3}$, $k_{1}$, $k_{2}$, $k_{3}$ are explicitly computed constants, $I_{0}$ and $K_{0}$ are the modified Bessel functions of the first and second kind (of order zero), respectively, and $C_{1}, C_{2}$ are the constant total substrate concentrations in $\Omega_{1}$ and $\Omega_{2}$, respectively.
\end{theorem}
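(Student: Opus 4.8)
The plan is to exploit two structural features of \eqref{eqn:ExactSolutions}: first, the equations for $v$ and for $w$ each decouple from their partner and reduce to a modified Bessel equation of order zero; second, the additive combinations $d_3 u + d_2 v$ and $d_3 w + d_2 z$ obey radial Laplace equations, which produces precisely the linear relations between $u,v$ and between $w,z$ asserted in \eqref{eqn:ExactSolutions_uv}--\eqref{eqn:ExactSolutions_wz}.

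In the reservoir region I would first treat $v$. Its equation $d_2(v'' + \frac1r v') = \kappa v$ is, after dividing by $d_2$ and setting $\alpha^2 = \kappa/d_2$, the modified Bessel equation $v'' + \frac1r v' - \alpha^2 v = 0$, whose general solution is $k_1 I_0(\alpha r) + \tilde k_1 K_0(\alpha r)$. Since a radially symmetric $C^2(\overline{\Omega}_1)$ solution must stay bounded as $r \to 0$ and $K_0$ is singular at the origin, we are forced to set $\tilde k_1 = 0$, giving $v = k_1 I_0(\alpha r)$. For $u$, rather than integrating the inhomogeneous equation directly, I would add the $u$- and $v$-equations to obtain $(r(d_3 u + d_2 v)')' = 0$, so that $d_3 u + d_2 v = A_1 \ln r + B_1$; boundedness at the origin again forces $A_1 = 0$, leaving the constant total concentration $d_3 u + d_2 v = C_1$ and hence $u = C_1/d_3 - (d_2/d_3) v$, as claimed.

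The active region is handled analogously, with one difference. The equation for $w$ is the modified Bessel equation with $\beta^2 = \rho/d_3$, but since the annulus $(R, R+\delta)$ excludes the origin both kinds are admissible, so $w = k_2 I_0(\beta r) + k_3 K_0(\beta r)$ with $k_2$ and $k_3$ both retained. Adding the $w$- and $z$-equations again yields $d_3 w + d_2 z = A_2 \ln r + B_2$, but this time the logarithmic term is not excluded by regularity. Instead I would differentiate and evaluate at $r = R + \delta$, where the two homogeneous Neumann conditions $d_3 w'(R+\delta) = 0$ and $d_2 z'(R+\delta) = 0$ force $A_2/(R+\delta) = 0$, hence $A_2 = 0$ and $d_3 w + d_2 z = C_2$, giving $z = C_2/d_2 - (d_3/d_2) w$.

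It remains to pin down the five constants $k_1, k_2, k_3, C_1, C_2$. I would impose the four interface relations at $r = R$ together with the outer flux condition $w'(R+\delta) = 0$ (using the identities $I_0' = I_1$ and $K_0' = -K_1$), and close the system with the mass constraint \eqref{eqn:MassElliptic}. I expect the main obstacle to lie exactly here: the linear system arising from the six boundary and interface conditions is homogeneous and, because of the underlying mass-conservation structure, rank-deficient by one, so its solution space is one-dimensional and the genuinely selecting condition is the mass normalization \eqref{eqn:MassElliptic}. Verifying this consistency and carrying out the Bessel-function algebra -- for instance, observing that the pair $d_3 u'(R) = \mu[w(R)-u(R)] = d_3 w'(R)$ forces $u'(R) = w'(R)$, and likewise $v'(R) = z'(R)$ -- is the bookkeeping-heavy part, but it becomes routine once the structural reductions above are in place.
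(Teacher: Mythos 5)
Your proposal is correct and follows essentially the same route as the paper: reduce the $v$- and $w$-equations to modified Bessel equations of order zero (discarding $K_{0}$ in the ball by boundedness at the origin), add the paired equations to obtain the affine relations $d_{3}u + d_{2}v = C_{1}$ and $d_{3}w + d_{2}z = C_{2}$, and determine the remaining constants from the interface and Neumann conditions together with the mass normalization. One minor point in your favour: you eliminate the $\ln r$ term in the annulus using the two homogeneous Neumann conditions at $r = R+\delta$, which is more rigorous than the paper's appeal to non-negativity (an argument that only works in the ball, where $\ln r$ blows up); otherwise the paper merely packages the constant determination differently, namely as an inhomogeneous $2\times 2$ system for $(k_{1},k_{2})$ with $k_{3} = k_{2}\, I_{1}(\beta(R+\delta))/K_{1}(\beta(R+\delta))$ slaved to the outer flux condition, which is equivalent to your description of a rank-deficient homogeneous system closed by the mass constraint.
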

\begin{proof}
	The derivation of the explicit solutions is straightforward. Observe that adding the ODEs for $u$ and $v$ (similarly, for $w$ and $z$) results to the ODE $y'' + \frac{1}{r}y' = 0$ for $y = d_{3}u + d_{2}v$ with fundamental solutions $\ln r$ and one. Since we want non-negative solutions, we disregard $\ln r$. Hence, we obtain the scaled conservation of total concentrations in $\Omega_{1}$ and $\Omega_{2}$
	\begin{equation} \label{eqn:ExactSolutions_MassCon1}
		d_{3}u(r) + d_{2}v(r) = C_{1} \qq{and} d_{3}w(r) + d_{2}z(r) = C_{2}.
	\end{equation}

	We now solve the ordinary differential system for $u$ and $v$. The general solution of the ODE for $v$ is given by $v(r) = k_{1} I_{0}(\alpha r) + \widetilde{k}_{1}K_{0}(\alpha r)$, where $\alpha^{2} = \kappa/d_{2}$, $I_{j}$ and $K_{j}$ are the \textit{modified Bessel functions} \cite[Section 10.25]{olver2010} of order $j \in \mathbb{Z}$ (first kind and second kind, respectively). Since $K_{0}$ is unbounded at the origin, we take $\widetilde{k}_{1} = 0$ so that we obtain $v(r) = k_{1} I_{0}(\alpha r)$. The constant $k_{1}$ can be explicitly computed using the boundary condition for $v$ in terms of the value of $z$ at the interface, which we will determine later, in particular, we get $k_{1} = z(R)/[\frac{d_{2}}{\nu}\alpha I_{1}(\alpha R) + I_{0}(\alpha R)]$. Due to the scaled conservation of total concentrations in \eqref{eqn:ExactSolutions_MassCon1}, we therefore obtain the exact solutions of $u$ and $v$ in \eqref{eqn:ExactSolutions_uv}. Solving the ODE for $w$ and arguing analogously as above, we finally obtain the exact solutions for $w$ and $z$ in \eqref{eqn:ExactSolutions_wz} with $\beta^{2} = \rho /d_{3}$ where the constants $k_{1}$ and $k_{2}$ are explicitly given by solving the linear system
	\begin{equation*}
		\begin{bmatrix}
			A_{11} & A_{12} \\
			A_{21} & A_{22}
		\end{bmatrix} \begin{bmatrix}
			k_{1} \\ k_{2}
		\end{bmatrix} = \begin{bmatrix}
			0 \\ \frac{1}{2}(C_{1} + C_{2})
		\end{bmatrix},
	\end{equation*}
	where
	\begin{align*}
		&A_{11} = d_{2} \alpha I_{1}(\alpha R), \\
		&A_{12} = d_{3} \beta K_{1}(\beta R) \qty[ \frac{I_{1}(\beta R)}{K_{1}(\beta R)} - \frac{I_{1}(\beta (R+\delta))}{K_{1}(\beta(R+\delta))}], \\
		&A_{21} = \frac{d_{3}^{2}}{\mu}\alpha I_{1}(\alpha R) + d_{2}I_{0}(\alpha R) + \frac{1}{2}d_{2}\alpha I_{1}(\alpha R) \qty(\frac{d_{3}}{\mu} - \frac{d_{2}}{\nu}), && \\
		&A_{22} = d_{3}K_{0}(\beta R) \qty[ \frac{I_{0}(\beta R)}{K_{0}(\beta R)} + \frac{I_{1}(\beta (R+\delta))}{K_{1}(\beta(R+\delta))}].
	\end{align*}
	Note that the two by two coefficient matrix above has rank two and hence, uniquely solvable for $k_{1}$ and $k_{2}$. Finally, we have $k_{3} = k_{2} \frac{I_{1}(\beta (R+\delta))}{K_{1}(\beta(R+\delta))}$.
\end{proof}

\begin{figure}[htb]
	\centering
	\includegraphics[width=0.9\textwidth]{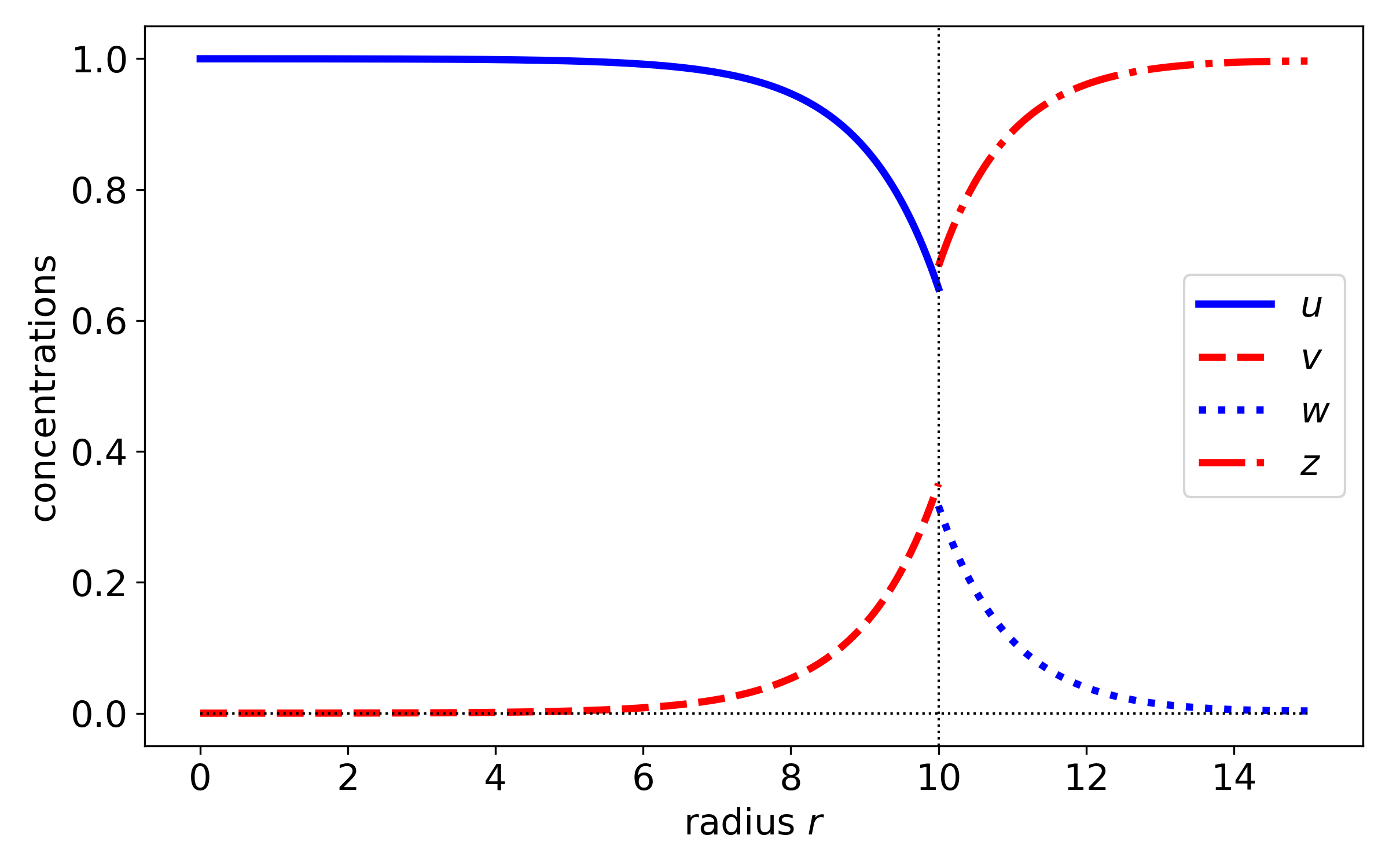}
	\caption{Profiles of the radially symmetric exact solutions in the reservoir and active regions. The triglyceride concentrations $u$ and $w$ are decreasing as the radius increases while the diglyceride concentrations $v$ and $z$ are increasing. The discontinuity at the interface is due to the Robin boundary condition}
	\label{fig:ExactSolutions}
\end{figure}

We now present some numerical simulations of the test model in system \eqref{eqn:EllipticModel1}. To this end, we use generic values for the constant parameters in the system, as we are more interested in the qualitative behaviours of the solutions than the quantitative. Hence, let all constant parameters ($d_{3}$, $d_{2}$, $\mu$, $\nu$, $\kappa$, $\rho $, $C_{1}$, $C_{2}$) in the system be equal to unity, the radius of the reservoir region $\Omega_{1}$ be $R = 10$, and the thickness of the active region $\Omega_{2}$ be $\delta = 5$. We implement the numerical simulations in \texttt{FEniCS}, a Python-based finite element computing software, using linear Lagrange basis functions $\mathbb{P}^{1}$. Figure~\ref{fig:ExactSolutions} and Figure~\ref{fig:FEMSolutions} show the profiles of the radially symmetric explicit solution and the finite element solution in two dimensions, respectively.
\begin{figure}[htb]
	\centering
	\includegraphics[width=0.5\linewidth]{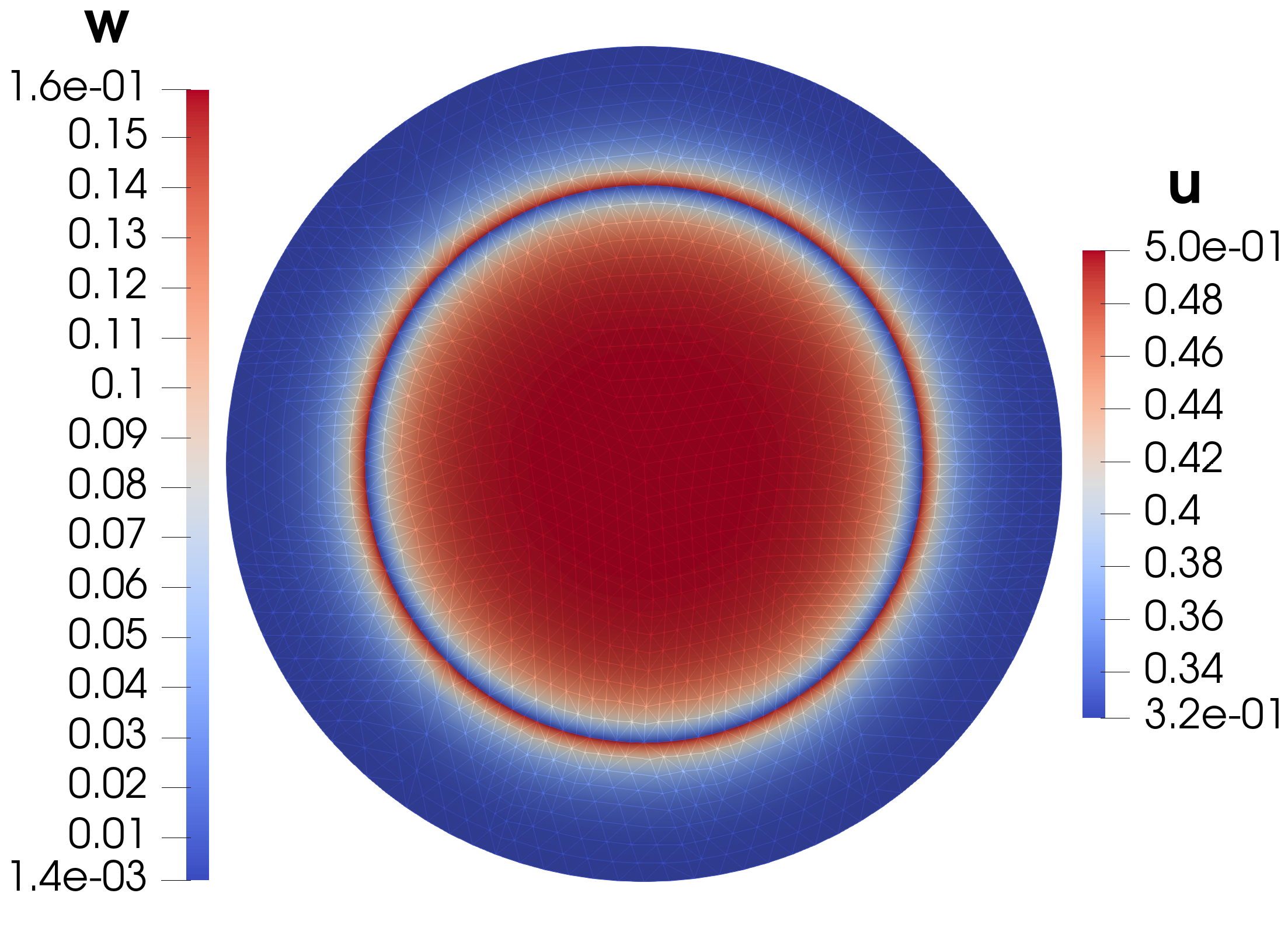}\includegraphics[width=0.5\linewidth]{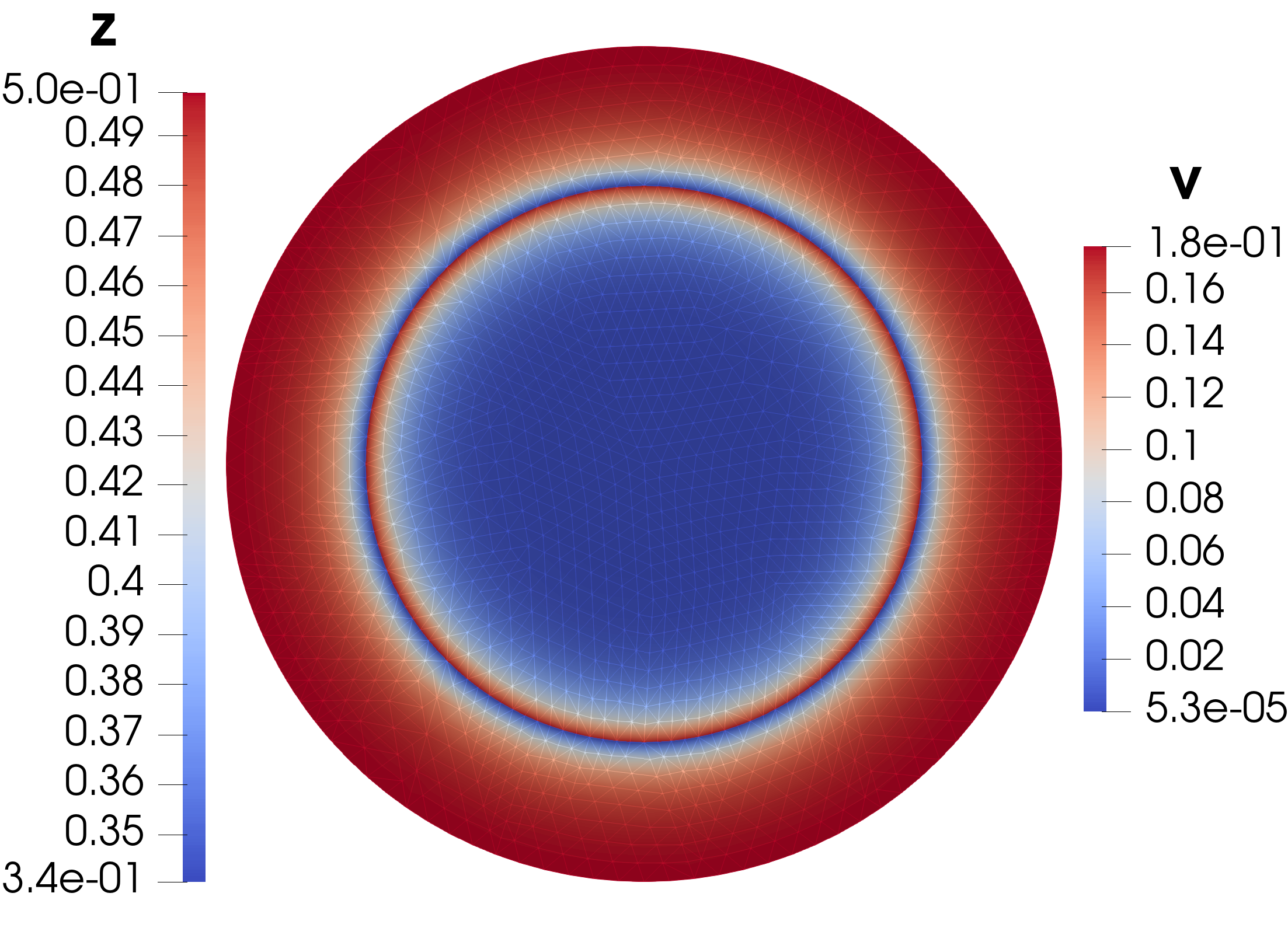}
	\caption{The finite element solution of the linear elliptic problem exhibits the same monotonicity and symmetry properties of the exact solutions. Note that the colour scales are different for each function}
	\label{fig:FEMSolutions}
\end{figure}

\begin{remark}\label{rem:ndof-mesh}
	In two-dimensions, we expect the estimated order of convergence in the $H^{1}$- and $L^{2}$-norms with respect to the number of degrees of freedom (\verb|ndof|) to be $k/2$ and $(k+1)/2$, respectively. Here, $k$ is the polynomial degree of the basis functions.
\end{remark}

We also obtain a numerical verification of the estimated order of convergence of the discretisation errors in the $H^{1}$- and $L^{2}$-norms. Figure~\ref{fig:ErrorsL2H1_rhalf} shows the quadratic convergence in the $L^{2}$- and linear convergence in the $H^{1}$-norms of the discretisation error for each of the unknown variables with respect to the mesh size. The subfigures show the discretisation error plotted against the mesh size, both in logarithmic scales. In each subfigure, we observe from the top right to the bottom left a decreasing error with decreasing mesh size. In terms of the number of degrees of freedom (\verb|ndof|), this corresponds to the estimated order of convergence of $0.5$ and $1.0$ with respect to the $H^{1}$- and $L^{2}$-norms, respectively (see Remark~\ref{rem:ndof-mesh}).
\begin{figure}[htb]
	\centering
	\includegraphics[width=0.9\linewidth]{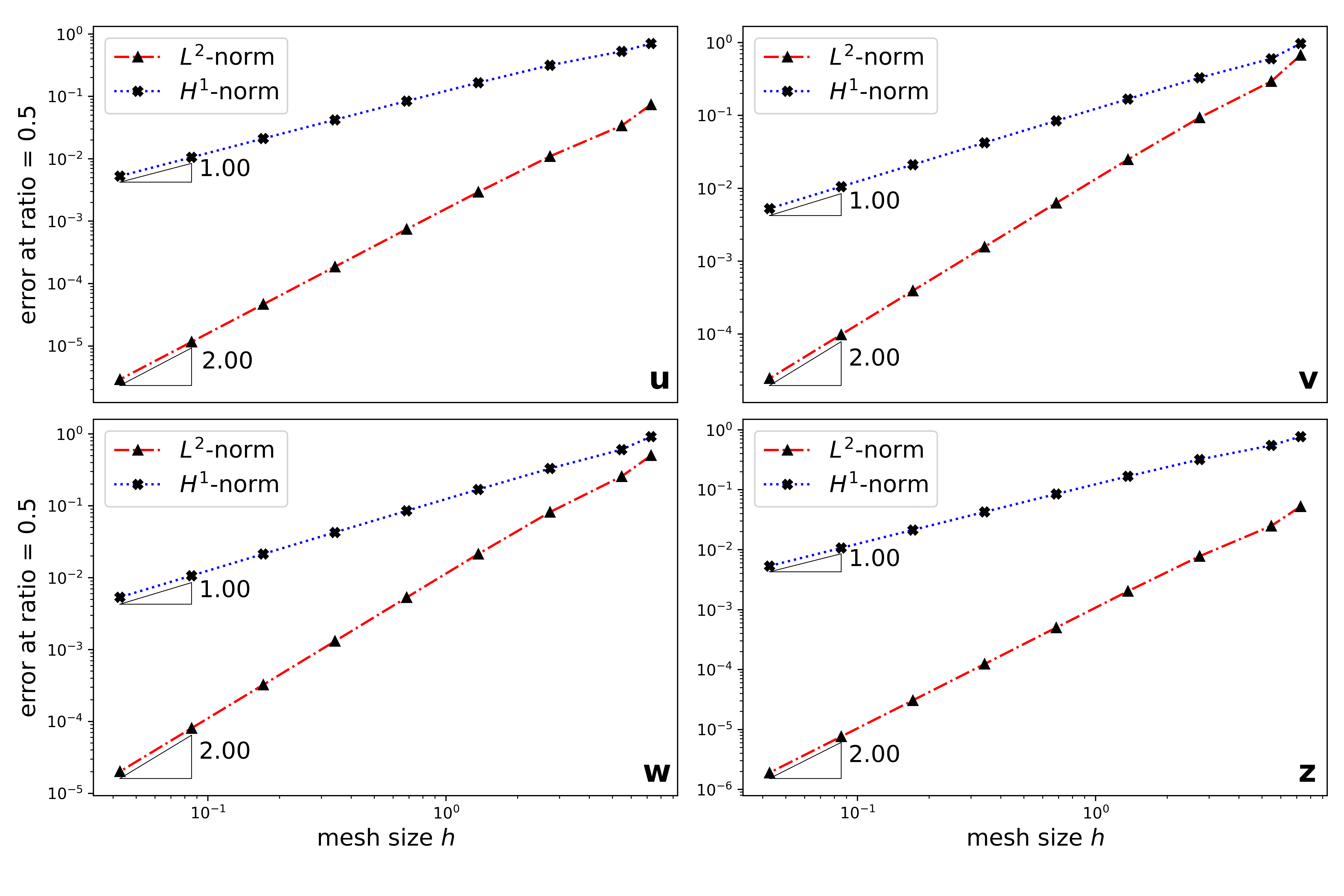}
	\caption{At ratio $0.5$, we observe the linear and quadratic order of convergence for the discretisation errors in $H^{1}$- and $L^{2}$-norms against the mesh size, resp. for all variables}
	\label{fig:ErrorsL2H1_rhalf}
\end{figure}

\begin{figure}[htb]
	\centering
	\includegraphics[width=0.9\linewidth]{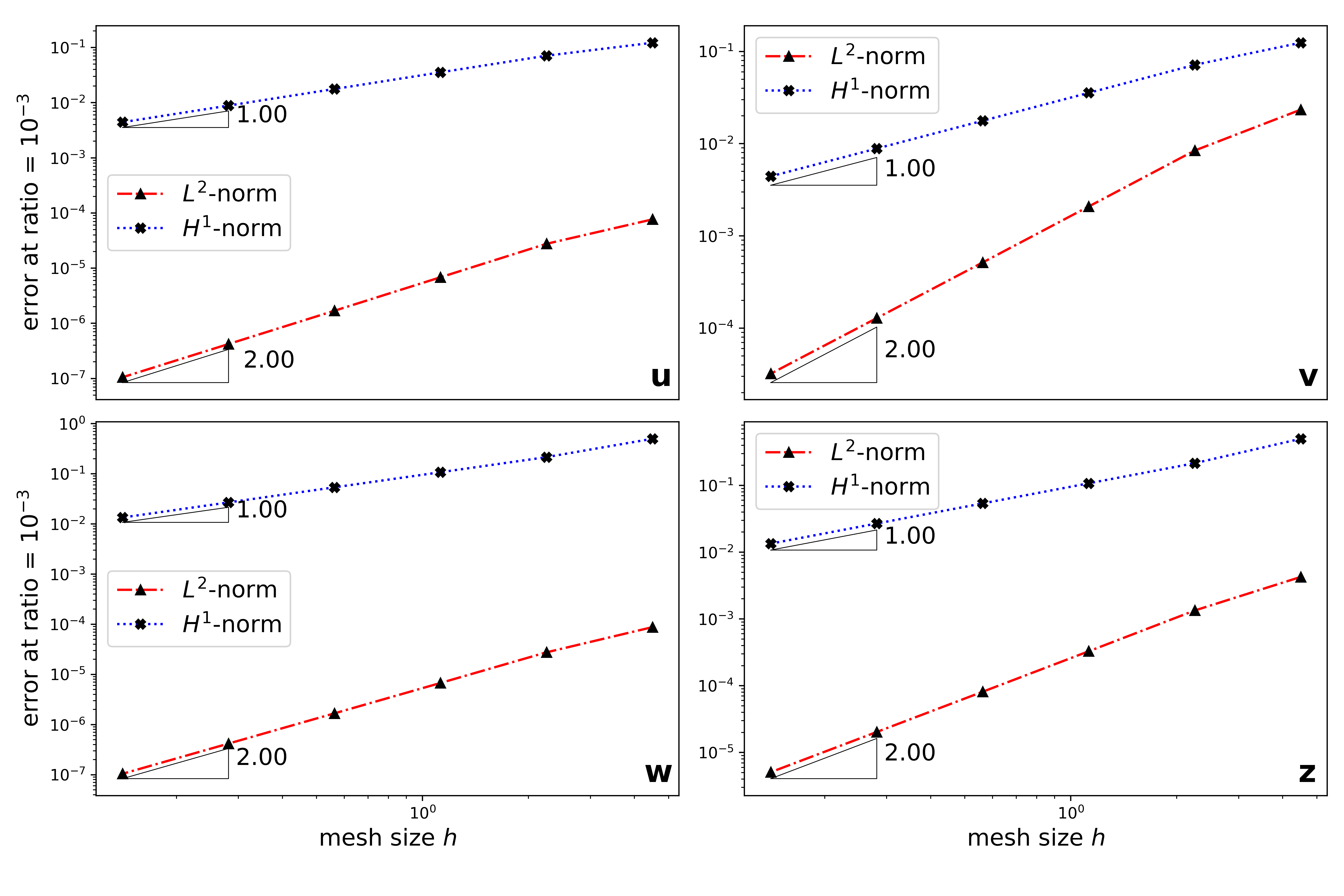}
	\caption{For the ratio $\delta/R$ of order $10^{-3}$, we still observe linear convergence for $H^{1}$- and quadratic convergence for $L^{2}$-norm of the discretisation errors against the mesh size}
	\label{fig:ErrorsL2H1_ratio1e-3}
\end{figure}

Next, we investigate the case when the ratio of the annular thickness $\delta$ and the radius $R$ of the reservoir region decreases. Here, we fix the radius $R=10$ (for consistency of plotting) and decrease the annular thickness $\delta$ down to a ratio of $10^{-3}$, which corresponds to the nondimensionalised geometry of LDs of size 5 $\mu$m in relation to a hypothesised molecular reach of ATGL of about 5 nm. In Figure~\ref{fig:ErrorsL2H1_ratio1e-3}, we still obtain the quadratic and linear convergence in the $L^{2}$- and $H^{1}$-norm of the discretisation error, respectively. The numerical solutions are shown in Figure~\ref{fig:uv_sol_r3}, which have the same qualitative behaviours as in the numerical solutions in Figure~\ref{fig:FEMSolutions}.

\begin{figure}[htb]
	\centering
	\includegraphics[width=0.45\linewidth]{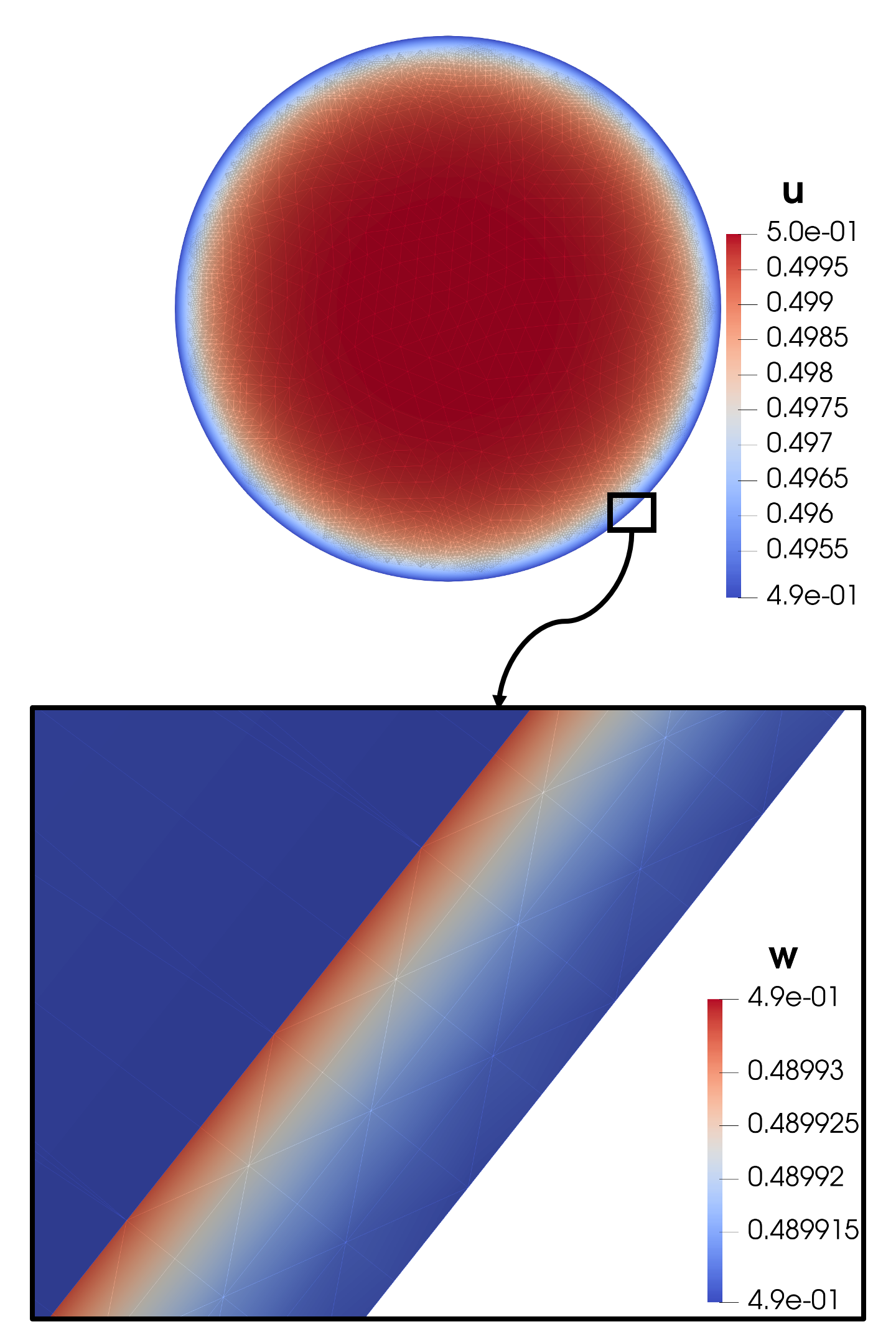}\includegraphics[width=0.45\linewidth]{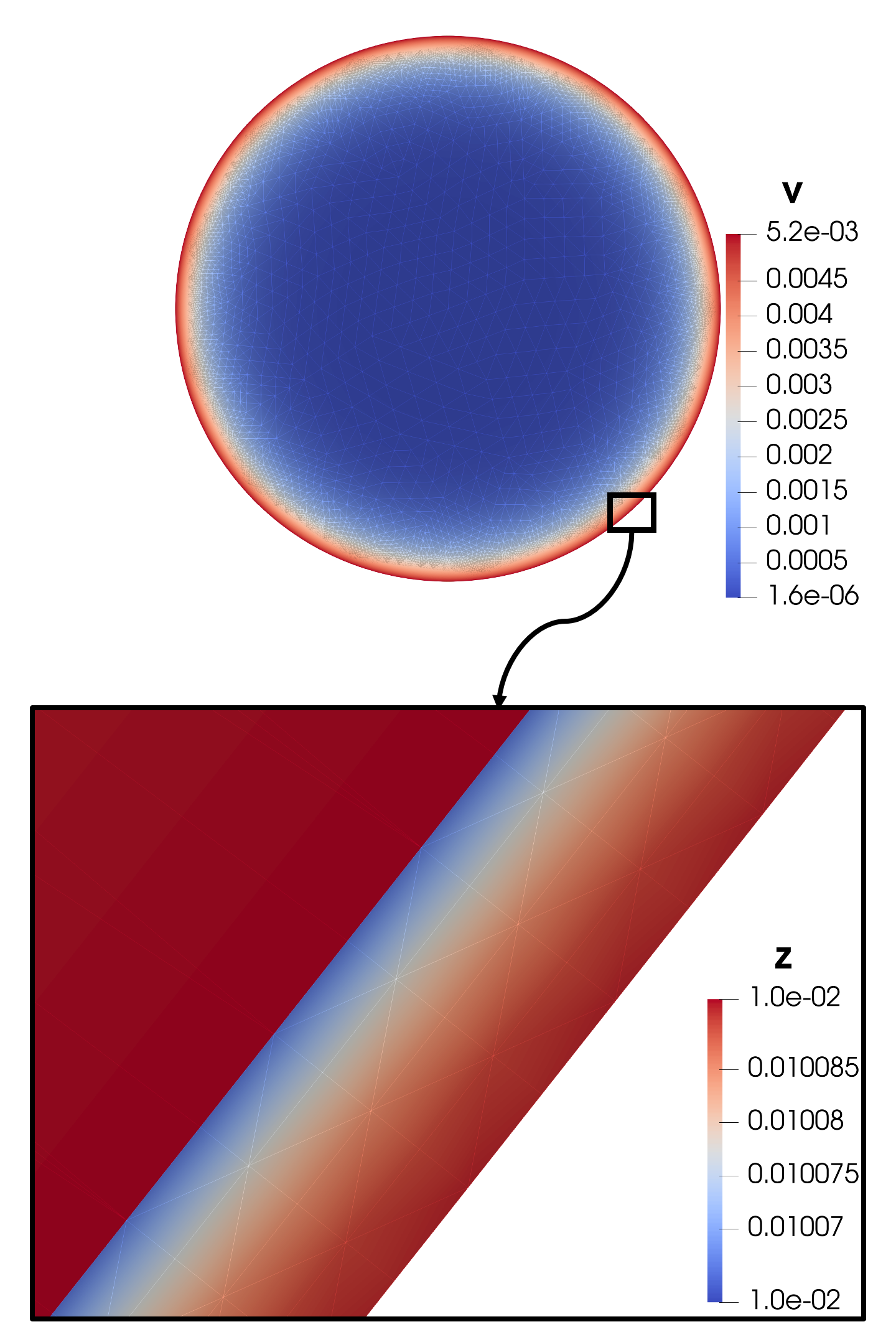}
	\caption{Numerical solutions for the ratio $\delta/R$ of order $10^{-3}$. For $w$ and $z$, only a portion of the strip is shown due to the thinness of the active region}
	\label{fig:uv_sol_r3}
\end{figure}

To demonstrate the exponential convergence of the global solutions to the equilibrium states, we implement a backward Euler time discretisation and then a classical finite element method to solve the resulting linear elliptic problem. In Figure~\ref{fig:entropy}, the exponential decay of the relative entropy functional $\mathcal{E}(\mathbf{c} - \mathbf{c}_{\infty}|\mathbf{c}_{\infty})$ in logarithmic scale is plotted against time. Here, we still use the generic values for the constant parameters with the addition that the parabolic system is solved on the time interval $[0, 10]$ with step size $0.01$. We observe that the entropy functional $\mathcal{E}$ approaches the graph of the linear function $y = -\lambda t + C$ as predicted in Theorem~\ref{thm:ExponentialConvergence}. For the given set of constant parameters, we have $\lambda \approxeq 1.79$ and $C \approxeq 107.6$.
\begin{figure}[htb]
	\centering
	\includegraphics[width=0.8\linewidth]{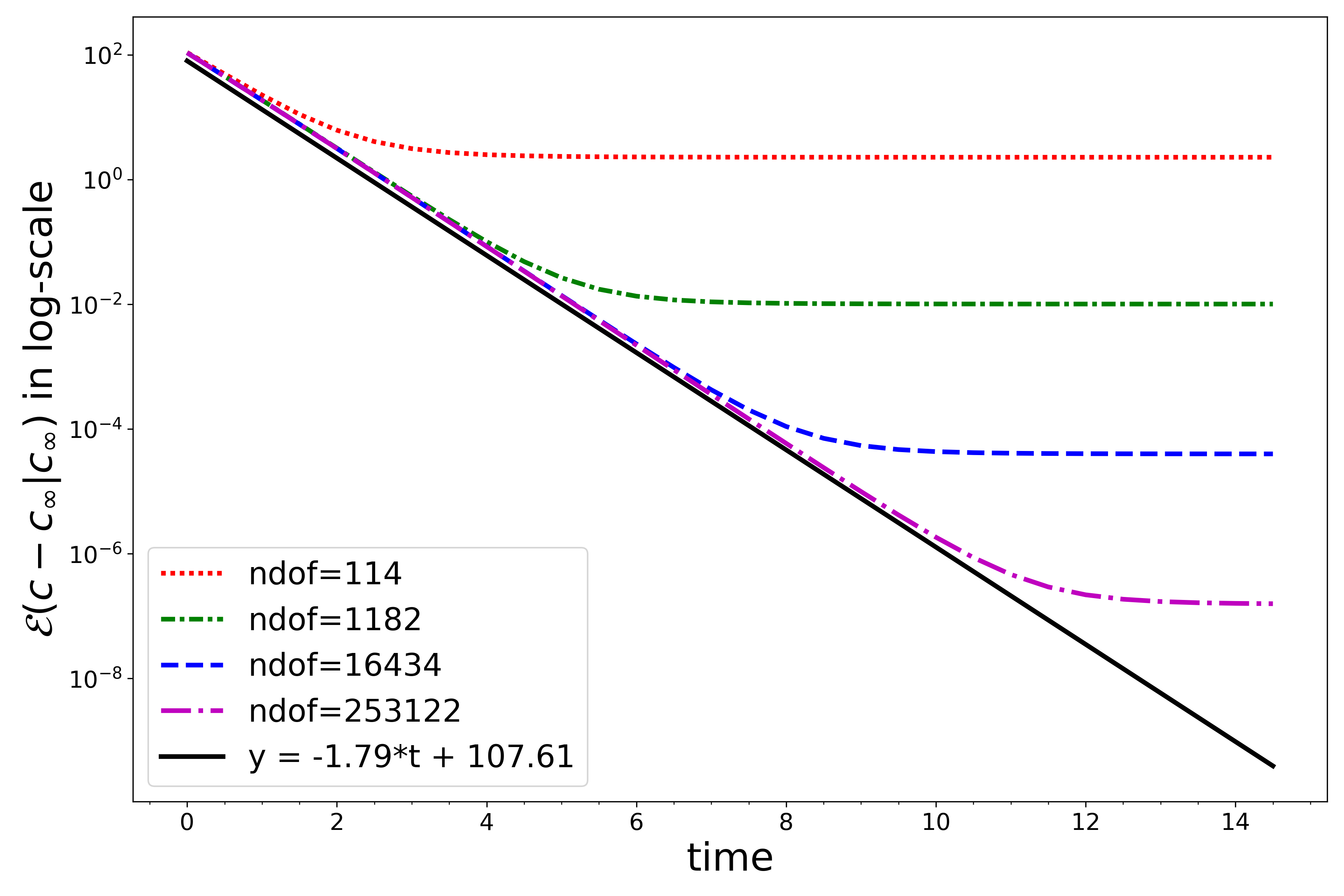}
	\caption{The relative entropy functional $\mathcal{E}(\mathbf{c} - \mathbf{c}_{\infty}|\mathbf{c}_{\infty})$ is exponentially decaying through time. Observe that the exponential decay is independent of the number of degrees of freedom (and hence, of mesh size)}
	\label{fig:entropy}
\end{figure}

Due to the geometry of the computational domain, and for comparison purposes, we also implemented a numerical scheme using \verb|GeoPDEs|, an open source and free computing Matlab package, which uses a method called \textit{isogeometric analysis} \cite{vazquez2016,defalco2011}. A detailed introduction into the method can be found in \cite{hughes2009}. The main difference of isogeometric analysis to the classical finite element method is the use of NURBS \textit{(Non-Uniform Rational B-Splines)} as basis functions that can exactly represent conical geometries. With this, our computational domains (reservoir and active regions) are exactly constructed while in the classical finite element implementation, these domains are only approximated (by triangulations) using polynomials as basis functions.

With appropriate change of basis functions, we solve the discrete variational problem \eqref{eqn:VarProb_Elliptic_Discrete} by considering it as an eigenproblem: we want to find the eigenfunction corresponding to the eigenvalue zero. Note that the eigenfunction is only determined up to a multiplicative factor, which we fix by imposing the total mass conservation law in \eqref{eqn:Discrete_MassCon} to obtain the solution.

For the rest of this section, we now use \verb|GeoPDEs| in the numerical simulations. We expect to obtain a better rate of convergence as the polynomial degree of the basis functions increases. In Figure~\ref{fig:ord_h1_err_all_ndof} and Figure~\ref{fig:ord_l2_err_all_ndof}, we observe that the estimated order of convergence increases by approximately one-half as the polynomial degree increases by one. The figures show the $H^{1}$- and $L^{2}$-norms of the discretisation error plotted against the number of degrees of freedom for polynomial degrees $k=1,2,3$. With respect to the mesh size, the estimated order of convergence for the $H^{1}$-norm error is therefore equal to $1$, $2$, $3$ while for the $L^{2}$-norm error, we have $2$, $3$, and $4$, for polynomial degrees $k=1, 2, 3$, respectively (see Remark~\ref{rem:ndof-mesh}). For $k=1$, we reiterate the results in Figure \ref{fig:ErrorsL2H1_rhalf} and Figure~\ref{fig:ErrorsL2H1_ratio1e-3}.
\begin{figure}[htb]
	\centering
	\includegraphics[width=0.9\linewidth]{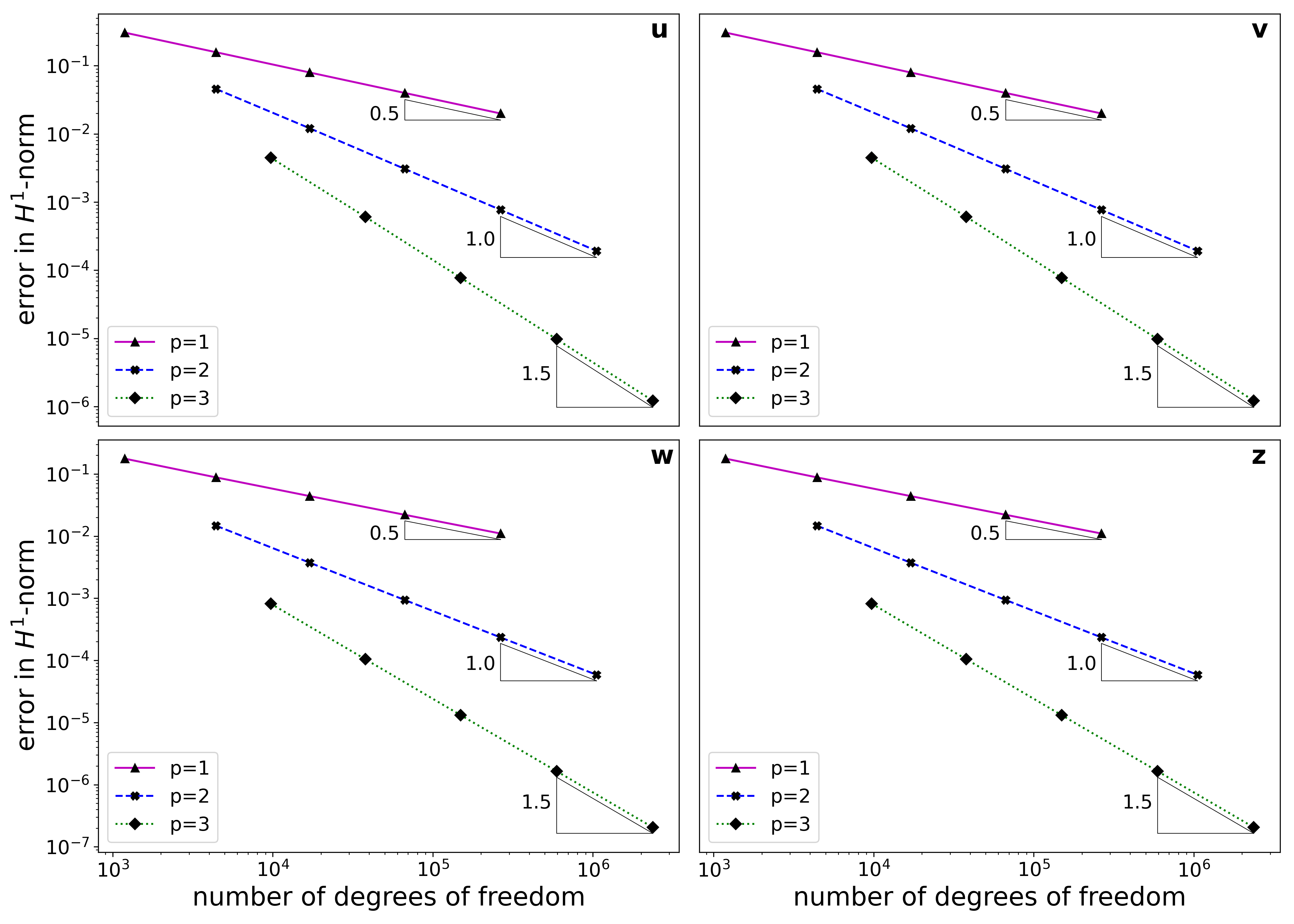}
	\caption{The order of convergence for the discretisation error in the $H^{1}$-norm plotted against the number of degrees of freedom increases by one-half as the degree of the NURBS basis functions increases by one}
	\label{fig:ord_h1_err_all_ndof}
\end{figure}

\begin{figure}[htb]
	\centering
	\includegraphics[width=0.9\linewidth]{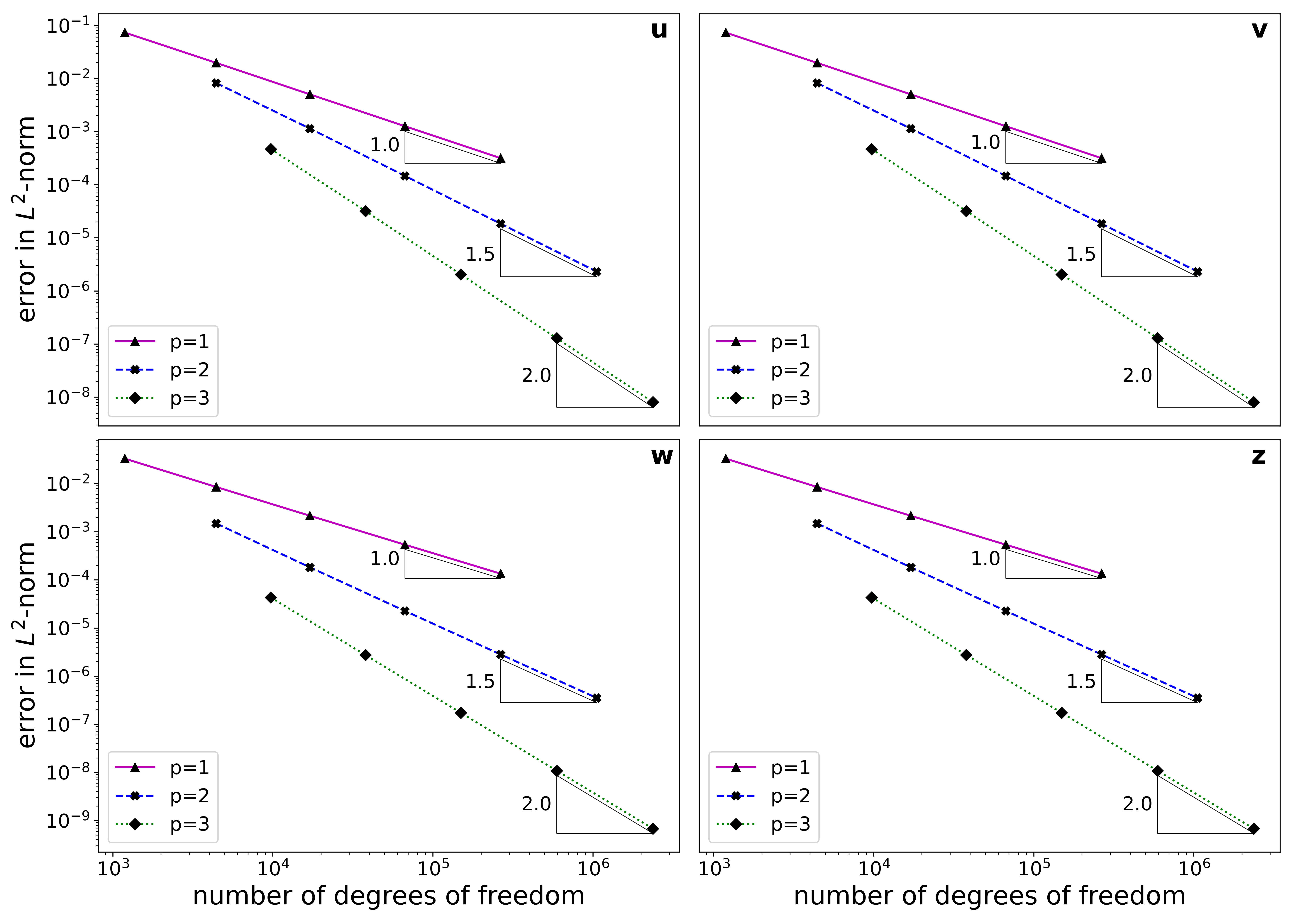}
	\caption{The order of convergence for the discretisation error in the $L^{2}$-norm plotted against the number of degrees of freedom increases by one-half as the degree of the NURBS basis functions increases by one}
	\label{fig:ord_l2_err_all_ndof}
\end{figure}

Finally, we present numerical simulations for the nonlinear elliptic system where the forward process of TG degradation into DG and FA is modelled by the Michaelis-Menten kinetics. In particular, we consider the variational problem: find $\mathbf{c}_{h} \in \mathbf{V}_{h}^{k}$ such that it holds
\begin{equation}\label{eqn:VarProb_Elliptic_Discrete_Nonlinear}
	\begin{split}
		\mathcal{A}_{h}(\mathbf{c}_{h}, \Phi) &= d_{3} (\nabla u_{h}, \nabla \varphi_{1})_{\Omega_{1,h}} + d_{2} (\nabla v_{h}, \nabla \varphi_{2})_{\Omega_{1,h}} \\
		&+ d_{3} (\nabla w_{h}, \nabla \varphi_{3})_{\Omega_{2,h}} + d_{2} (\nabla z_{h}, \nabla \varphi_{4})_{\Omega_{2,h}} \\
		&+ \kappa (v_{h}, \varphi_{2} - \varphi_{1})_{\Omega_{1,h}} + \qty(\frac{v_{3} w_{h}}{k_{3} + w_{h}}, \varphi_{3} - \varphi_{4})_{\Omega_{2,h}} \\
		&+ \mu(w_{h} - u_{h}, \varphi_{3} - \varphi_{1})_{\Gamma_{h}} + \nu(z_{h} - v_{h}, \varphi_{4} - \varphi_{2})_{\Gamma_{h}} \\
		&= 0 \qq{for all test functions} \Phi \in \mathbf{V}_{h}^{k},
	\end{split}
\end{equation}
and in addition, the discrete total mass conservation law in \eqref{eqn:Discrete_MassCon} holds. We solve the nonlinear system \eqref{eqn:VarProb_Elliptic_Discrete_Nonlinear} using a simple Picard iteration method. In particular, we linearise system \eqref{eqn:VarProb_Elliptic_Discrete_Nonlinear} by replacing the unknown state in the denominator of the Michaelis-Menten term with a previously known solution, more precisely, at iteration step $n$,  we take $\frac{v_{3} w_{h}^{n}}{k_{3} + w_{h}^{n-1}}$ where $w_{h}^{n-1}$ is a known solution from the previous iteration. Figure~\ref{fig:r3_nonlin_err} shows the convergence of the solution after twelve iterations using the $H^{1}$- and $L^{2}$-norms of the difference of the previous $\mathbf{c}_{h}^{n-1}$ and the current $\mathbf{c}_{h}^{n}$ solutions. Here, we initialise the iteration by first solving the linear problem in system \eqref{eqn:VarProb_Elliptic_Discrete} and then taking the solution $w_{h}^{0}$ as the initial iterate in the nonlinear system~\eqref{eqn:VarProb_Elliptic_Discrete_Nonlinear}.

\begin{figure}[htb]
	\centering
	\includegraphics[width=0.9\linewidth]{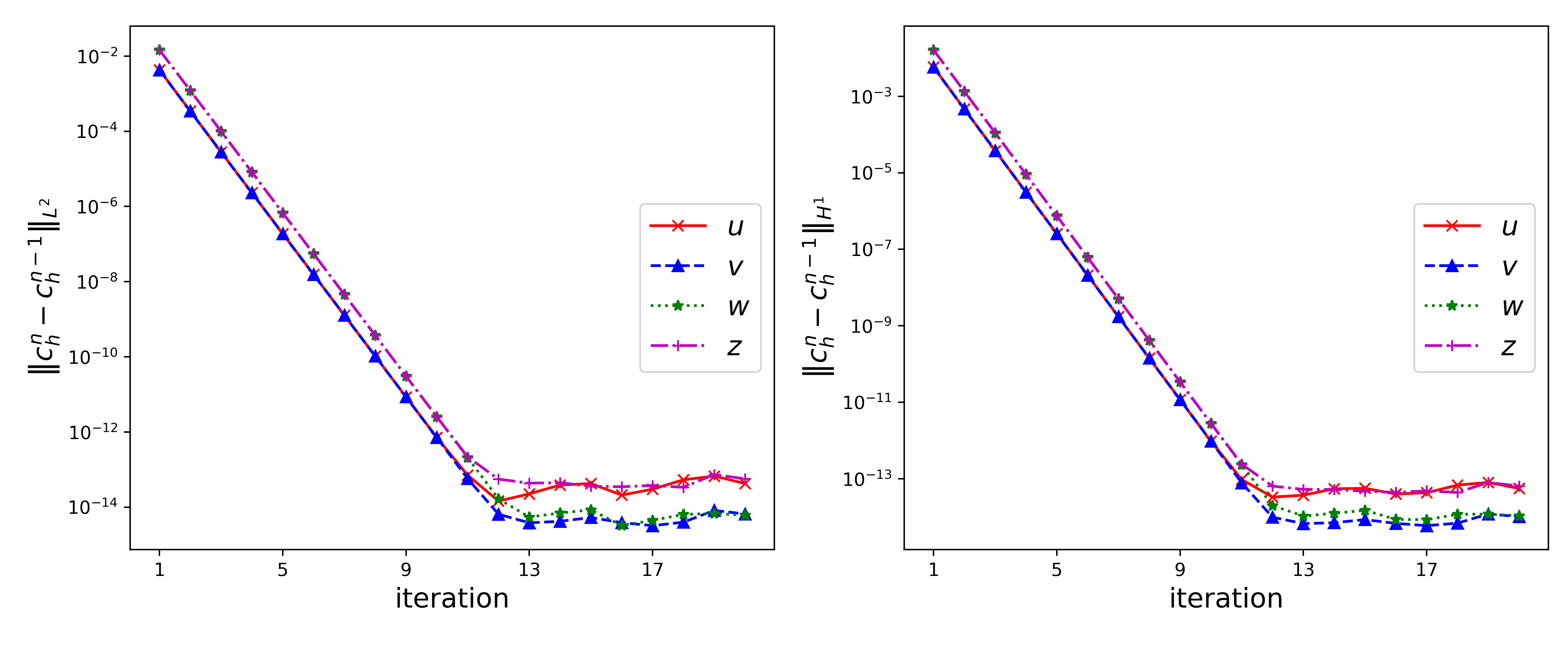}
	\caption{Convergence of the Picard iteration for the nonlinear elliptic system at ratio = 0.5 in the $L^{2}$- and $H^{1}$-norms. Observe that we obtain convergence after twelve iterations for all unknown states}
	\label{fig:r3_nonlin_err}
\end{figure}

\section{Effects of ATGL Clustering on Lipolysis}\label{sec:clustering}

In this section, we illustrate the effects of ATGL clustering on the surface of LDs via numerical examples. ATGL molecules are known to be able to form sizeable clusters and there is experimental evidence of spatially heterogeneous distributions of ATGL on LDs \cite{Kulminskaya-2021,Padmanabha-2018}. However, a quantitative assessment of the effects of ATGL heterogeneities to lipolysis is far beyond the currently available experimental protocols, which are only able to quantitatively measure substrate and enzyme concentrations of entire populations of LDs.

In the following, we present a prototypical model for lipolysis, which processes TGs over DGs and MGs to GLs and three FAs according to hydrolysis in Figure~\ref{fig:lipolysis} and transacylation in Figure~\ref{fig:transacylation}. The PDE model reads for all $t \in (0, T]$ and some arbitrary observation time $T > 0$
\begin{equation}\label{eqn:ParabolicModelRealistic}
	\begin{split}
		\begin{cases}
			\pdv{}{t} q_{3i} - d_{3} \Delta q_{3i}  = 0, & x \in \Omega_{1}, \\
			d_{3} \partial_{\eta} q_{3i} = \mu[q_{3} - q_{3i}], & x \in \Gamma, \\
			\pdv{}{t} q_{3} - d_{3} \Delta q_{3} = - \dfrac{v_{3}a_{3}(x) q_{3}}{k_{3} + q_{3}} + \sigma a_{3}(x)q_{2}^{2}, & x \in \Omega_{2}, \\
			d_{3} \partial_{\eta} q_{3} = \mu[q_{3} - q_{3i}], & x \in \Gamma, \\
			d_{3} \partial_{\eta} q_{3} = 0, & x \in \partial\Omega, \\
			\pdv{}{t} q_{2} - d_{2} \Delta q_{2} = +\dfrac{v_{3}a_{3}(x) q_{3}}{k_{3} + q_{3}} - \dfrac{v_{2} e_{2} q_{2}}{k_{2} + q_{2}} - 2\sigma a_{3}(x) q_{2}^{2}, & x \in \Omega_{2}, \\
			d_{2} \partial_{\eta} q_{2} = 0, & x \in \partial \Omega, \\
			d_{2} \partial_{\eta} q_{2} = 0, & x \in \Gamma, \\
			\pdv{}{t} q_{1} - d_{1} \Delta q_{1} = -\dfrac{v_{1} m_{1} q_{1}}{k_{1} + q_{1}} + \dfrac{v_{2} e_{2} q_{2}}{k_{2} + q_{2}} + \sigma a_{3}(x) q_{2}^{2}, & x \in \Omega_{2}, \\
			d_{1} \partial_{\eta} q_{1} = 0, & x \in \Gamma, \\
			d_{1} \partial_{\eta} q_{1} = 0, & x \in \partial\Omega, \\
			\pdv{}{t} q_{0} - d_{0} \Delta q_{0} = +\dfrac{v_{1} m_{1} q_{1}}{k_{1} + q_{1}}, & x \in \Omega_{2}, \\
			d_{0} \partial_{\eta} q_{0} = 0, & x \in \Gamma, \\
			d_{0} \partial_{\eta} q_{0} = 0, & x \in \partial\Omega,
		\end{cases}
	\end{split}
\end{equation}
where $q_{3i}$ and $q_3$ denote the TG concentrations in the reservoir region $\Omega_{1}$ and the active layer $\Omega_{2}$. The exchange of TGs over the interface between reservoir region and active layer is governed by the constant flux rate $\mu > 0$ and the TG diffusion constant $d_{3} > 0$. Moreover, $q_{2}$, $q_{1}$ and $q_{0}$ are the concentrations of DGs, MGs, and GLs in the active region with constant diffusion coefficients $d_{2}$, $d_{1}$, $d_{0} > 0$.

Note that DGs and MGs are confined to the active region by homogeneous Neumann boundary conditions as we assume that the increased polarity of DGs and MGs compared to the unpolar TGs (with their symmetrical configuration of three FAs) prevents them from leaving the active region near the surface of the LD. Hence, the reservoir region is occupied only by TGs. This matches the observations that \textit{in vivo} DGs and MGs are negligible inside LDs except under pathological conditions.

Next, $v_{3}$, $k_{3}>0$ are Michaelis-Menten kinetic parameters of TG hydrolysis catalysed by the AGTL concentration $a_{3}(x)$, for which we detail below three different spatial distributions. The AGTL concentration $a_3(x)$ also facilitates DG transacylation (recall Figure~\ref{fig:transacylation}), which is modelled according to the mass action law (MAL) with a parameter $\sigma>0$. Finally, the constant parameters $v_{2}$, $k_{2}>0$ and $v_{1}$, $k_{1}>0$ denote the Michaelis-Menten kinetic parameters of DG hydrolysis catalysed by the HSL concentration $e_{2} > 0$ and MG hydrolysis catalysed by MGL with concentration $m_{1} > 0$. Note that we do not consider HSL and MGL to cluster.

System \eqref{eqn:ParabolicModelRealistic} is subject to the TG only initial data
\begin{equation} \label{eqn:ParabolicModelRealistic_Init}
	\begin{cases}
		q_{3i}(x, 0) = q_{3i}^{0}(x), & x \in \Omega_{1}, \\
		q_{3}(x, 0) = q_{3}^{0}(x), & x \in \Omega_{2}, \\
		q_{2}(x, 0) = 0, & x \in \Omega_{2}, \\
		q_{1}(x, 0) = 0, & x \in \Omega_{2}, \\
		q_{0}(x, 0) = 0, & x \in \Omega_{2}.
	\end{cases}
\end{equation}
Observe that solutions to system \eqref{eqn:ParabolicModelRealistic} conserve the total amount of glycerol, that is for all $0 < t \leq T$
\begin{multline}
	\int_{\Omega_{1}} q_{3i}(x,t) \dd{x} + \int_{\Omega_{2}} [ q_{3}(x,t) + q_{2}(x,t) + q_{1}(x,t) + q_{0}(x,t)] \dd{x} \\
	= \int_{\Omega_{1}} q_{3i}^{0}(x) \dd{x} + \int_{\Omega_{2}} q_{3}^{0}(x) \dd{x}.
\end{multline}

For the ATGL concentration $a_{3}(x)=a_{3}(r, \theta)$, we use the following three radial Gaussian profiles to study the effects of clustering: the radially homogeneous distribution,
\begin{equation*}
	a_{3}^{(1)}(r,\theta) = \frac{1}{s\sqrt{2\pi}} e^{-\frac{1}{2}\qty(\frac{r - \overline{r}}{s})}, \quad \theta \in [0, 2\pi],
\end{equation*}
the two clusters distribution,
\begin{equation*}
	a_{3}^{(2)}(r,\theta) =
	\begin{cases}
		\frac{\alpha_{1}}{s\sqrt{2\pi}} e^{-\frac{1}{2}\qty(\frac{r - \overline{r}}{s})}, & \theta \in [-\pi/8, \pi/8] \cup [7\pi/8, 9\pi/8], \\
		0, & \text{elsewhere},
	\end{cases}
\end{equation*}
and the four clusters distribution,
\begin{equation*}
	a_{3}^{(4)}(r,\theta) =
	\begin{cases}
		\frac{\alpha_{2}}{s\sqrt{2\pi}} e^{-\frac{1}{2}\qty(\frac{r - \overline{r}}{s})}, & \theta \in \begin{aligned}
			& [0, \pi/4] \cup [\pi/2, 3\pi/4] \cup \\
			& [\pi, 5\pi/4] \cup [3\pi/2, 7\pi/4],
		\end{aligned} \\
		0, & \text{elsewhere},
	\end{cases}
\end{equation*}
where  $\alpha_{1}$, $\alpha_{2}$ are chosen so that the total amount of ATGL is equal, i.e.
\[ \int_{\Omega_{2}} a_{3}^{(1)} \dd{x} = \int_{\Omega_{2}} a_{3}^{(2)} \dd{x} = \int_{\Omega_{2}} a_{3}^{(4)} \dd{x}.
\]

\begin{figure}[htb]
	\centering
	\includegraphics[width=0.33\linewidth]{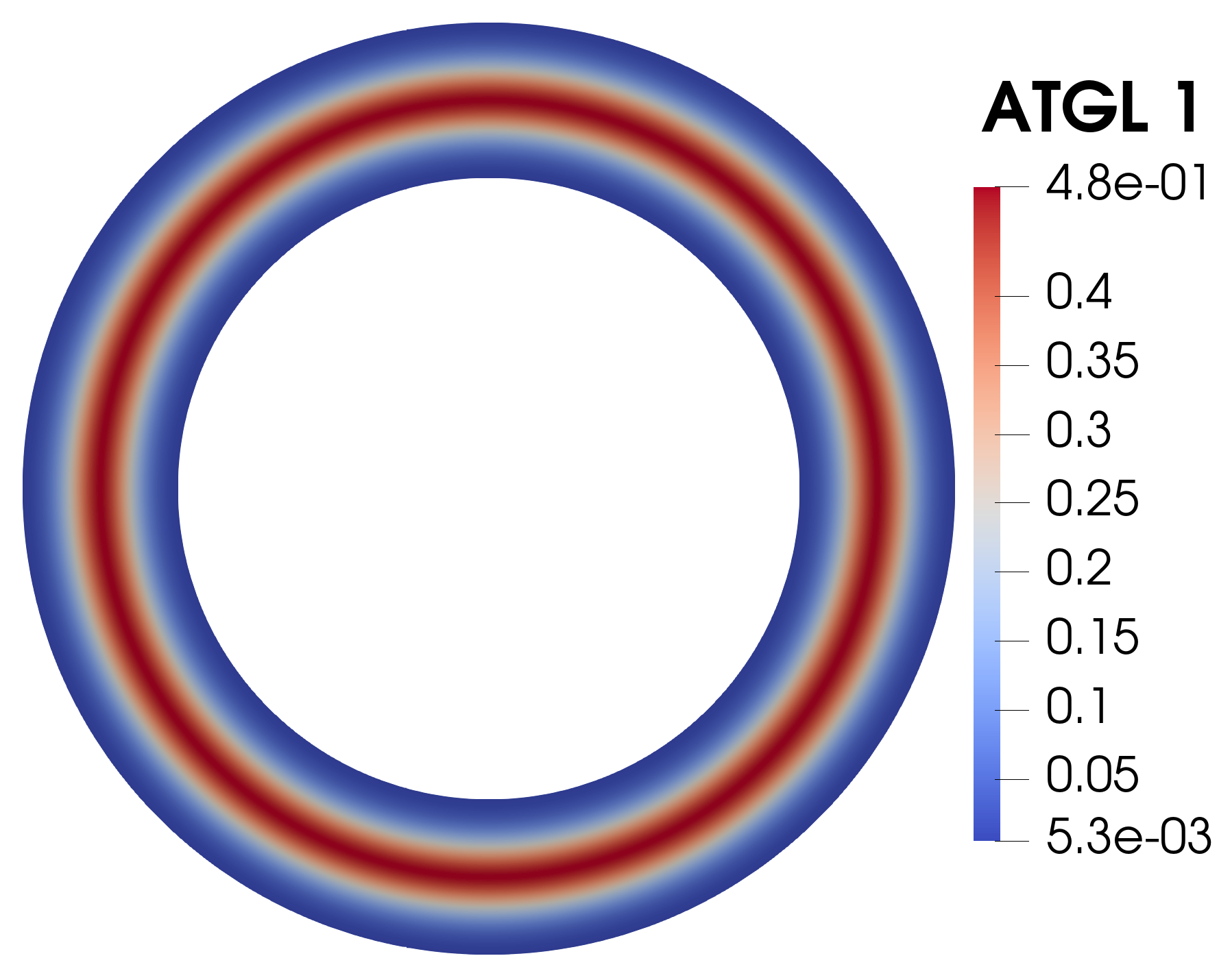}\includegraphics[width=0.33\linewidth]{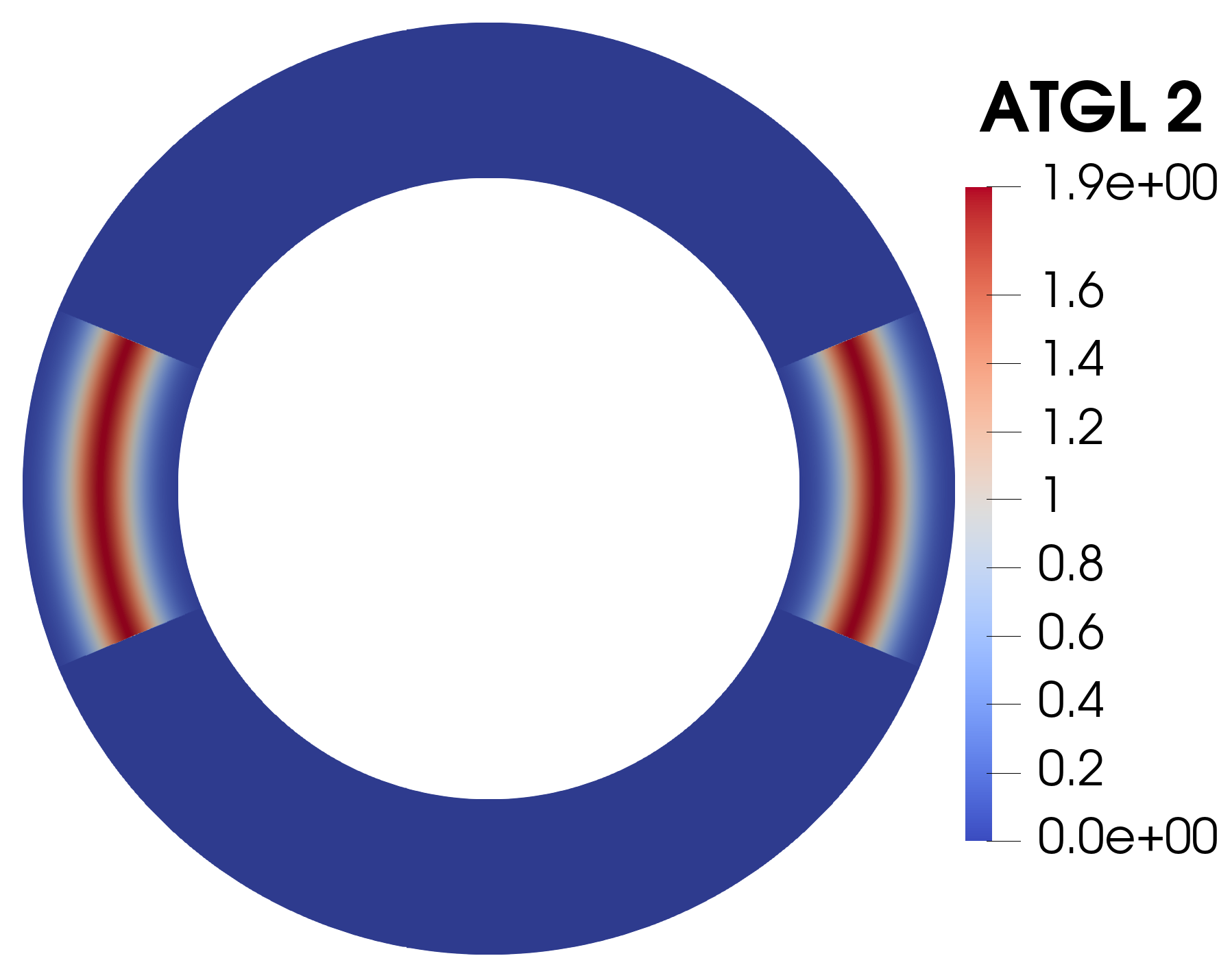}\includegraphics[width=0.33\linewidth]{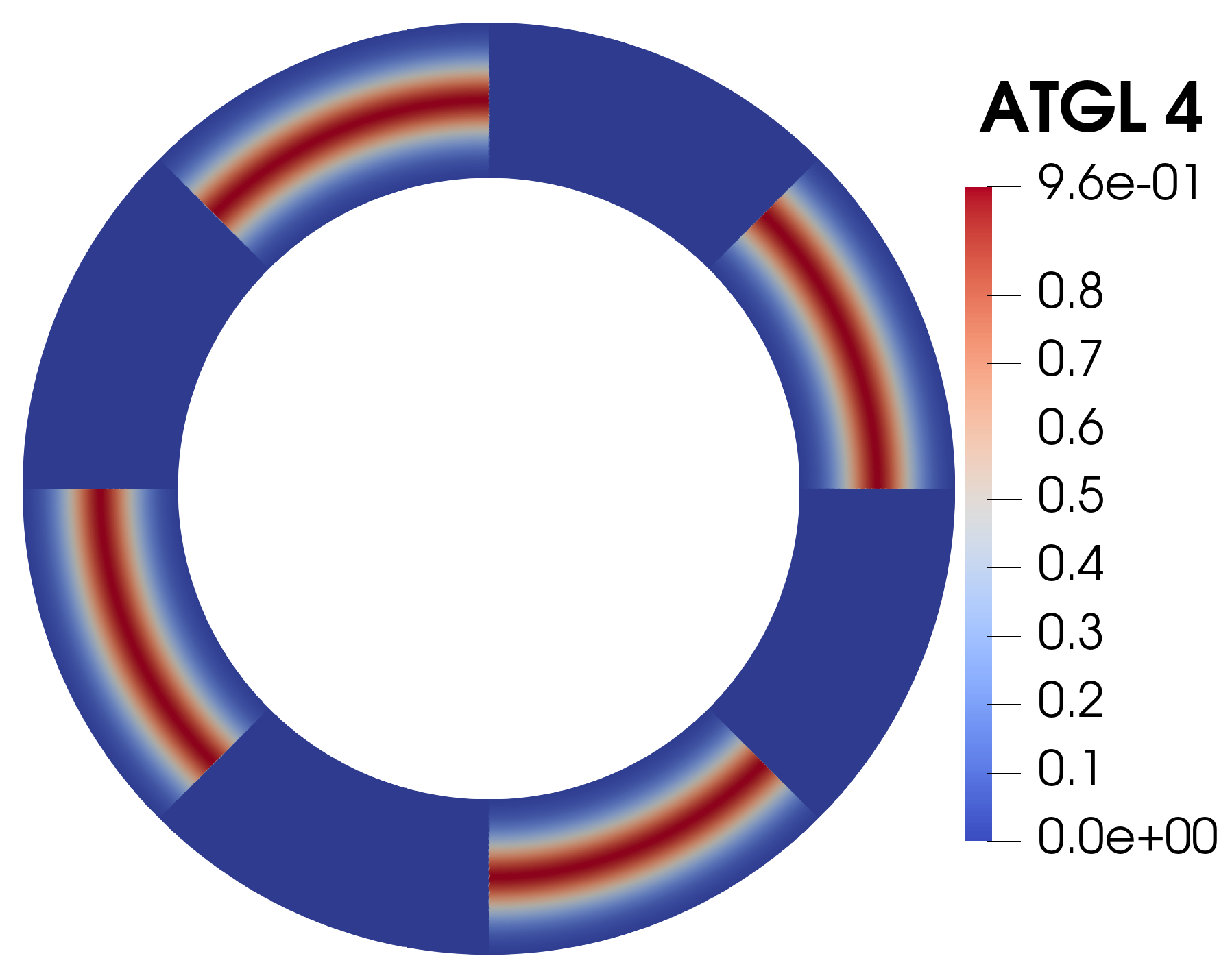}
	\caption{The three ATGL concentration profiles $a_{3}^{(1)}$, $a_{3}^{(2)}$, and $a_{3}^{(4)}$ in the active region}
	\label{fig:atgl_profiles}
\end{figure}

Figure~\ref{fig:atgl_profiles} plots the three ATGL distributions in the active region using $r \in [R, R+\delta] = [10, 15]$, $\overline{r} = 25/2$, and $s = 5/6$. We also use these parameter values in the succeeding numerical simulations.

Moreover in the numerics, all other parameters of the model \eqref{eqn:ParabolicModelRealistic} are normalised to unity with the exception of the Michaelis-Menten velocities $v_2$ and $v_1$ of HSL and MGL, which are set to two and three, respectively, following a suggestion of biochemists. We also remark that system \eqref{eqn:ParabolicModelRealistic} is solved by a backward Euler time discretisation and by a one-step Picard linearisation to the resulting nonlinear elliptic system. The resulting stationary system is therefore linear and is implemented in \verb|FEniCS|.

In the following, we study the effects of ATGL clustering by comparing the activity of the lipolytic process, which we measure by the percentage of MGs produced since the beginning in relation to the conserved total amount of glycerol in the system. Note that in the lipolytic cascade TG $\to$ DG $\to$ MG $\to$ GL, MGs are the first downstream product unaffected by ATGL and that every glycerol being part of an initial TG will eventually be processed to be part of an MG, implying that the percentage of produced MGs will always reach 100\% in the large time limit.

\begin{figure}[htb]
	\centering
	\includegraphics[width=0.9\linewidth]{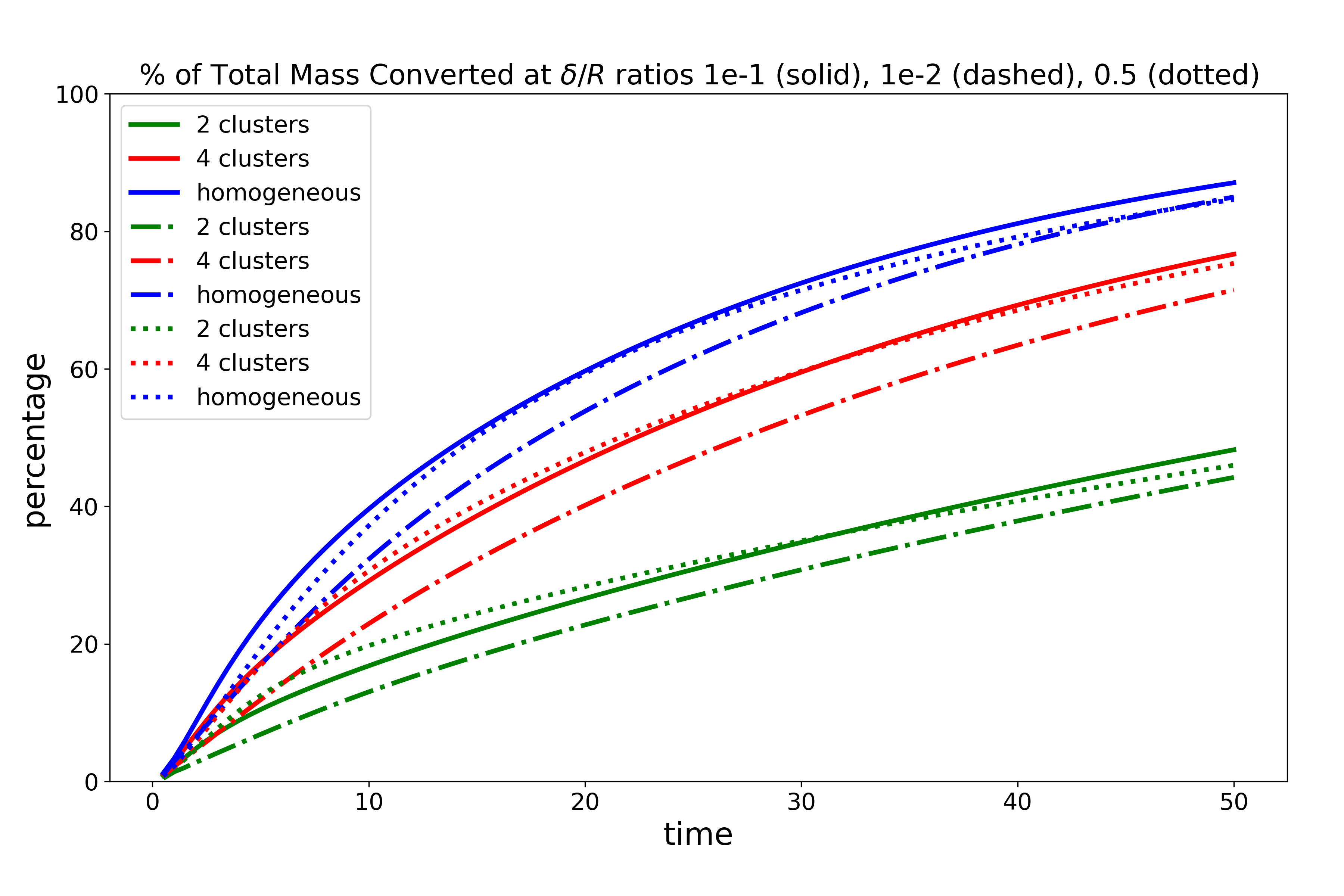}
	\caption{A comparison of the evolution of the percentage of produced MGs, where the same total amount of ATGL is distributed either radially symmetric or aggregated in two resp. four clusters. The dotted, solid, and dashed lines compare different relative thicknesses of the active region}
	\label{fig:clustering}
\end{figure}

Figure~\ref{fig:clustering} compares the evolution of the percentages of produced MGs for three different spatial distributions of the same total ATGL amount and for three different relative thicknesses of the active layer $\delta/R$. The three blue lines show that the radially symmetric Gaussian ATGL profile $a_{3}^{(1)}$ yields the quickest build up of MGs. The dotted, solid, and dashed blue lines compare $\delta/R$ ratios of $0.5$, $0.1$ and $0.01$ and illustrate intricate minor variations caused by the relative thickness of the active layer. Moreover, Figure~\ref{fig:clustering} shows that the two ATGL clusters of $a_{3}^{(2)}$ yield a significantly slowed down  production of MGs (green lines), while the four ATGL clusters of $a_{3}^{(4)}$ (red lines) yield increased production, yet not as fast as in the radially symmetric case without ATGL clustering (blue lines).

The interpretation of both the effects of ATGL clustering and relative thickness of the active layer is nontrivial, which is majorly due to the different nonlinear reaction rates of DG hydrolysis (Michaelis-Menten is bounded by a linear function) and DG transacylation 2DG~$\to$~TG~+~MG, which is modelled by a quadratic function. Depending on whether the DG concentration is relatively small or large, DG hydrolysis will dominate DG transacylation and vice versa \cite{elias2023}. The minor variations caused by the three different relative thicknesses (dotted, solid, and dashed lines) are the result of minor variations in the evolution of the DG concentrations, the diffusion of the substrates within the active region and the effects of DG transacylation.

\begin{figure}[htb]
	\centering
	\includegraphics[width=0.9\linewidth]{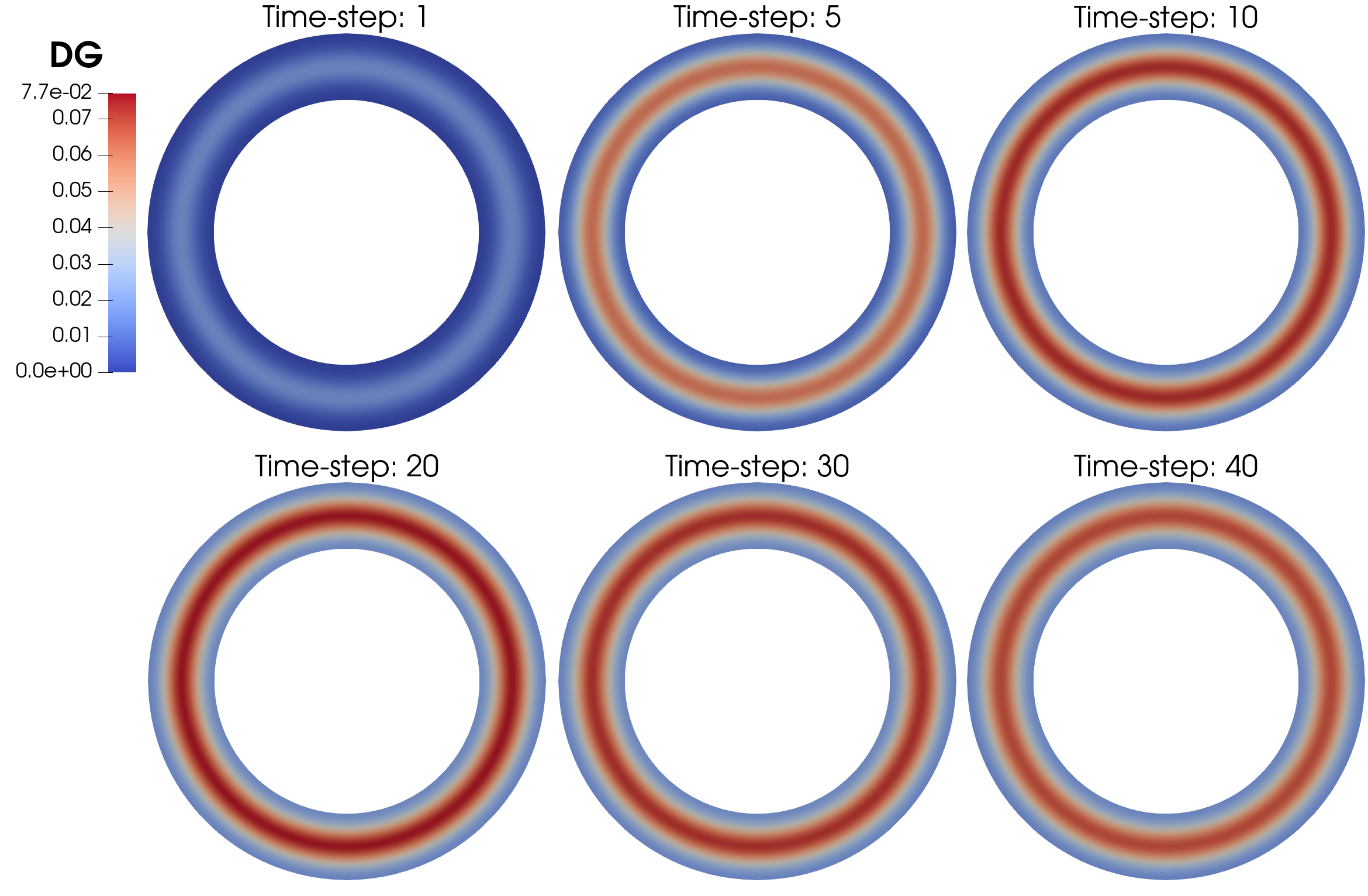}
	\caption{Snapshots of the DG concentration in case of the radially symmetric ATGL distribution. Observe that initially, DG concentration is zero but builds up from time step one proportional to the ATGL profile until reaching maximum levels and slowly decaying later}
	\label{fig:dg_homogeneous}
\end{figure}

The major differences between zero, two or four ATGL clusters correspond to the significant changes in DG concentrations caused by ATGL clustering. As illustration, we present snapshots of the numerical DG concentrations in the case where $\delta / R = 0.5$ for the homogeneous ATGL distribution in Figure~\ref{fig:dg_homogeneous} compared to two ATGL clusters Figure~\ref{fig:dg_2clusters} and four ATGL clusters in Figure~\ref{fig:dg_4clusters}, which shows that the case of two ATGL cluster yields the largest DG concentrations, causing the strongest slow down in Figure~\ref{fig:clustering} (green lines). We encounter here a situation, where increased DG transacylation due to larger DG concentrations slows down lipolysis by taking away substrate from DG hydrolysis, which is the faster process to produce MGs \cite{elias2023}.

As supplementary materials, the full animated versions can be found in the Github repository \href{https://github.com/reymartsalcedo/lipolysis}{https://github.com/reymartsalcedo/lipolysis}. In the three cases of ATGL distributions, we observe an initial build of DGs up to maximum levels, which follows the profile of the ATGL distribution, and then a slow decays of DGs as they are degraded into MGs by either DG hydrolysis or DG transacylation.

The numerical example of Figure~\ref{fig:clustering} demonstrates that clustering of ATGL and associated heterogeneities have significant impact on the efficiency of the lipolytic cascade. A more specific discussion of the effects of heterogeneities and/or the effects of transacylation as partial feedback process, which becomes increasingly important with accumulating DG concentrations, is still impeded by the lack of reliable kinetic parameters for ATGL, HSL, and MGL. However, there is a current work in progress, which aims to identify the kinetic parameters of ATGL from specially designed biochemical experiments using purified ATGL acting on a well-defined substrate of artificial lipid droplets.

\begin{figure}[htb]
	\centering
	\includegraphics[width=0.9\linewidth]{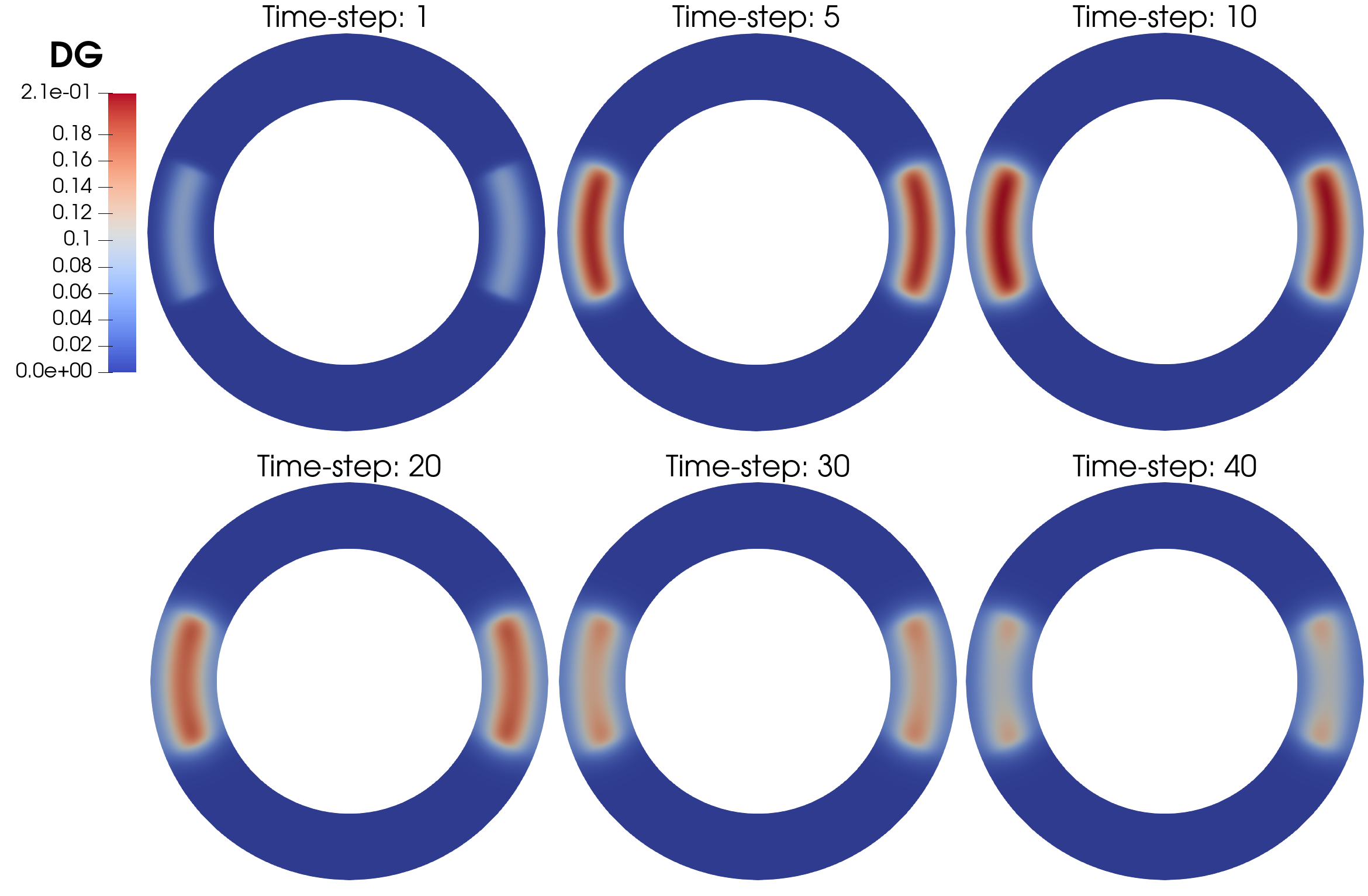}
	\caption{Snapshots of the DG concentration in case of two  ATGL clusters. Compared to the radially symmetric case, the larger ATGL concentrations localised in two cluster yield larger maximum levels of DG concentrations}
	\label{fig:dg_2clusters}
\end{figure}

\begin{figure}[htb]
	\centering
	\includegraphics[width=0.9\linewidth]{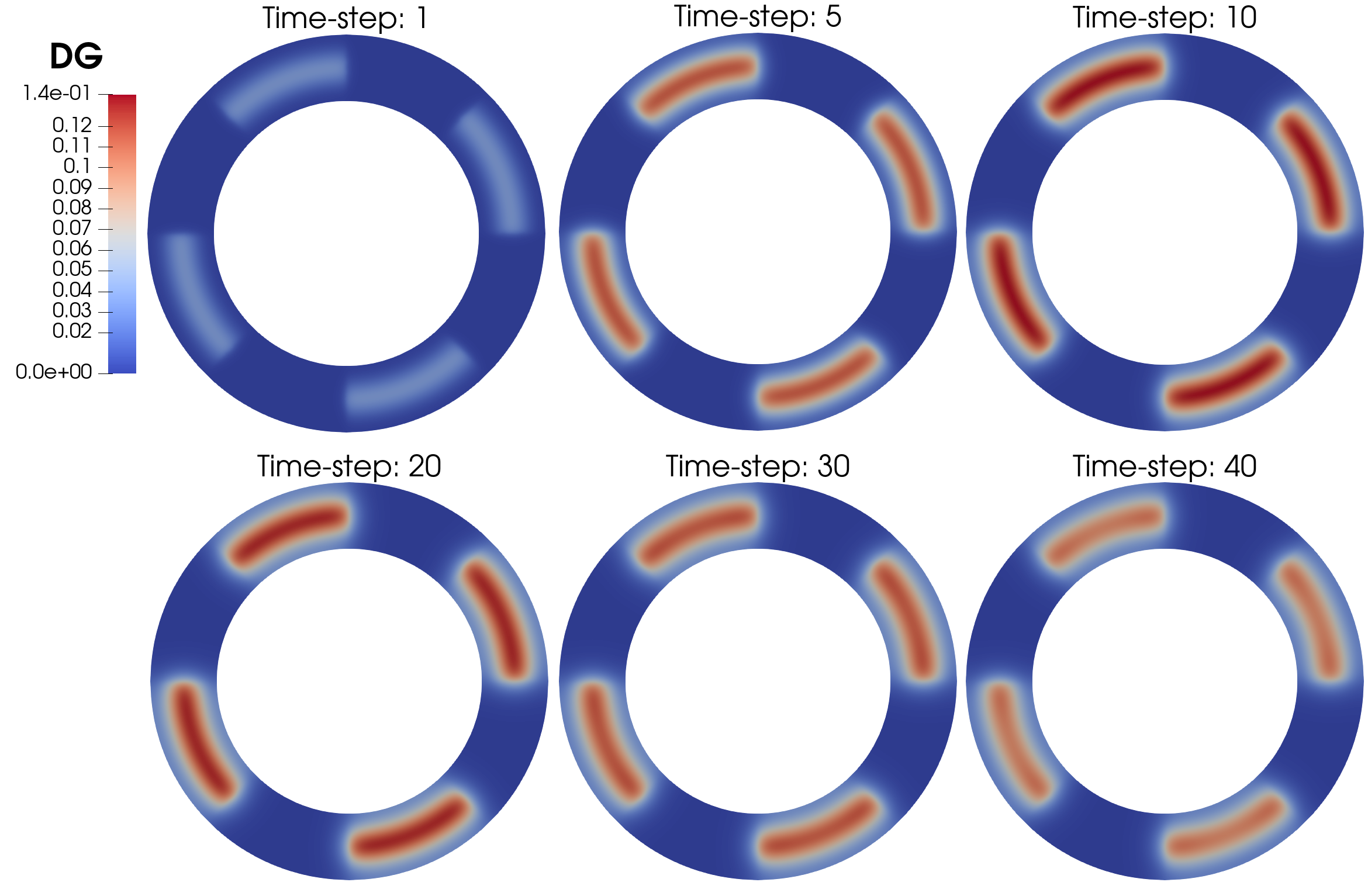}
	\caption{Snapshots of the DG concentration in case of four ATGL clusters. The maximum levels of DG concentration are in between the radially symmetric and the two cluster case}
	\label{fig:dg_4clusters}
\end{figure}

\begin{remark}[Machine Specifications]
	All numerical simulations are implemented using Python 3.7.10 for \verb|FEniCS| (version 2018.1.0) and Matlab R2021a for \verb|GeoPDEs| (version 3.2.2), Manjaro Linux 64-bit (Kernel: Linux 6.1.65-1-MANJARO), Quad Core Intel Xeon E3-1220 v5, 32 GB RAM. Visualisations are done in ParaView (\href{https://www.paraview.org/}{https://www.paraview.org/}).
\end{remark}

\section{Conclusions}\label{sec:conclusions}

In this paper, we presented a first PDE model for lipolysis on lipid droplets by postulating a thin active layer near the surface, where the enzymes can get into contact with the substrates. The PDE model in system \eqref{eqn:ParabolicModel} served as numerical testing case in preparation for a prototypical model of lipolysis in system \eqref{eqn:ParabolicModelRealistic} incorporating hydrolysis and DG transacylation. For the linear parabolic system \eqref{eqn:ParabolicModel}, the classical theory is applicable to prove the well-posedness of the problem, while for the corresponding elliptic system, difficulties arise from the noncoercivity of the bilinear form, which prevents the use of the classical Lax-Milgram Theorem. Instead, we adapt the arguments of \cite{kfellnerbtang2017} for which a fixed point argument was used to prove existence of solutions and the entropy method for the uniqueness. We also used the entropy method to prove the exponential convergence of solutions to the complex-balanced equilibrium.

We have provided numerical simulations to demonstrate the theoretical results. In particular, we implemented a classical finite element method using the computing package \verb|FEniCS| and an isogeometric method using \verb|GeoPDEs|. The main reason of using two different methods is to compare the numerical results. While the results from the \verb|FEniCS| implementation are already enough for a numerical test model, the results from \verb|GeoPDEs| improved the numerical errors in the sense that the computational domain is exactly represented and hence, the error only comes from the finite-dimensional approximation of the continuous problem. Finally, we explored, through numerical simulations, the effects of ATGL clustering in the active region, which shows a significant slow down of lipolysis.

The proof for the existence and uniqueness of discrete solutions is an open problem in this paper as well as for the error estimates. Nonetheless, we have numerically shown the linear and quadratic convergence of the discretisation error in the $H^{1}$- and $L^{2}$- norms. Additionally, we have shown the improved convergence for using higher order NURBS basis functions.

%\section*{Acknowledgments}
%Reymart Salcedo Lagunero would like to thank the co-authors from the University of the Bundeswehr Munich (and members of their institute) during his six-month research visit.

\section*{Declarations}
\textbf{Funding} Reymart Salcedo Lagunero was funded by the International Research Training Group IGDK 1754 Munich - Graz. This article was published in Open Access with financial support from the University of Graz.

\vspace{0.1in}
\noindent\textbf{Conflict of Interest} The authors have no relevant financial or non-financial interests to disclose.

\vspace{0.1in}
\noindent\textbf{Ethics Approval and Consent to Participate} Not applicable in this article.

\vspace{0.1in}
\noindent\textbf{Consent for Publication} Not applicable in this article.

\vspace{0.1in}
\noindent\textbf{Data and Materials Availability Statement} Not applicable in this article.

\vspace{0.1in}
\noindent\textbf{Code Availability} Sample minimal implementation code in \verb|FEniCS| can be found in the Github repository \href{https://github.com/reymartsalcedo/lipolysis}{https://github.com/reymartsalcedo/lipolysis}.

\vspace{0.1in}
\noindent\textbf{Author's Contributions} \textbf{Thomas Apel:} Supervision, Resources, Validation, Writing - Review and Editing \textbf{Klemens Fellner:} Conceptualisation, Methodology, Supervision, Formal Analysis, Resources, Writing - Review and Editing \textbf{Volker Kempf:} Software (\verb|FEniCS|), Validation, Writing - Review and Editing \textbf{Reymart Salcedo Lagunero:} Writing - Original Draft - Review and Editing, Formal Analysis, Visualisation, Data Curation, Software, Investigation \textbf{Philipp Zilk:} Software (\verb|GeoPDEs|), Validation, Writing - Review and Editing. All authors read and approved the final manuscript.

\begin{appendices}
\section{Relative Entropy and Entropy-Production Functionals}\label{sec:A1}

We note that since the parabolic system \eqref{eqn:ParabolicModel} describes the evolution of a complex-balanced chemical reaction network, it features the \textit{relative entropy functional} $\mathcal{E}$ given by:
	\begin{equation} \label{eqn:RelativeEntropy}
		\mathcal{E}(\mathbf{c}_{1}|\mathbf{c}_{2}) = \int_{\Omega_{1}} \qty( \frac{u_{1}^{2}}{u_{2}} + \frac{v_{1}^{2}}{v_{2}} ) \dd{x} + \int_{\Omega_{2}} \qty( \frac{w_{1}^{2}}{w_{2}} + \frac{z_{1}^{2}}{z_{2}} ) \dd{x},
	\end{equation}
	where $\mathbf{c}_{1} := (u_{1}, v_{1}, w_{1}, z_{1})$ and $\mathbf{c}_{2} := (u_{2}, v_{2}, w_{2}, z_{2})$ are any two solutions of the parabolic system with respect to possibly different initial data. For comparisons, see \cite{fellner2017,kfellnerbtang2017}. The evolution of the relative entropy functional computes according to the \textit{entropy-production functional} $\mathcal{D}$ as follows:
    \begin{lemma}[Entropy-Production Functional] \label{lem:EntropyProductionFunc}
        The entropy-production functional is given by
            \begin{align} \label{eqn:EntropyProduction}
                &\mathcal{D}(\mathbf{c}_{1}|\mathbf{c}_{2}) = - \dv{}{t} \mathcal{E}(\mathbf{c}_{1}|\mathbf{c}_{2}) \notag \\
                &= \int_{\Omega_{1}} 2d_{3} u_{2} \qty| \nabla\qty(\frac{u_{1}}{u_{2}}) |^{2} \dd{x} + \int_{\Omega_{1}} 2d_{2} v_{2} \qty| \nabla\qty(\frac{v_{1}}{v_{2}}) |^{2} \dd{x} \notag \\
                & + \int_{\Omega_{1}} \kappa v_{2} \qty| \frac{u_{1}}{u_{2}} - \frac{v_{1}}{v_{2}} |^{2} \dd{x} + \int_{\Omega_{2}} 2d_{3} w_{2} \qty| \nabla\qty(\frac{w_{1}}{w_{2}}) |^{2} \dd{x} \notag \\
                & + \int_{\Omega_{2}} 2d_{2} z_{2} \qty| \nabla\qty(\frac{z_{1}}{z_{2}}) |^{2} \dd{x} + \int_{\Omega_{2}} \rho w_{2} \qty| \frac{w_{1}}{w_{2}} - \frac{z_{1}}{z_{2}} |^{2} \dd{x} \notag \\
                & + \int_{\Gamma} \mu (w_{2} + u_{2}) \qty| \frac{u_{1}}{u_{2}} - \frac{w_{1}}{w_{2}} |^{2} \dd{s} + \int_{\Gamma} \nu (z_{2} + v_{2}) \qty| \frac{v_{1}}{v_{2}} - \frac{z_{1}}{z_{2}} |^{2} \dd{s}.
            \end{align}
    \end{lemma}

    \begin{proof}
    	Note that
    	\begin{align*}
    		\dv{}{t} & \mathcal{E}(\mathbf{c}_{1}|\mathbf{c}_{2}) = \int_{\Omega_{1}} \left[ 2 \frac{u_{1}}{u_{2}}(\partial_{t}u_{1}) - \frac{u_{1}^{2}}{u_{2}^{2}}(\partial_{t}u_{2}) + 2 \frac{v_{1}}{v_{2}}(\partial_{t}v_{1}) - \frac{v_{1}^{2}}{v_{2}^{2}}(\partial_{t}v_{2}) \right] \dd{x} \\
    		&+ \int_{\Omega_{2}} \left[ 2 \frac{w_{1}}{w_{2}}(\partial_{t}w_{1}) - \frac{w_{1}^{2}}{w_{2}^{2}}(\partial_{t}w_{2}) + 2 \frac{z_{1}}{z_{2}}(\partial_{t}z_{1}) - \frac{z_{1}^{2}}{z_{2}^{2}}(\partial_{t}z_{2}) \right] \dd{x}.
    	\end{align*}
    	Insert the time derivatives of $\mathbf{c}_{1}$ to obtain
    	\begin{align*}
    		\dv{}{t} & \mathcal{E}(\mathbf{c}_{1}|\mathbf{c}_{2}) = \int_{\Omega_{1}} \left[ 2 \frac{u_{1}}{u_{2}}(d_{3}\Delta u_{1} + \kappa v_{1}) - \frac{u_{1}^{2}}{u_{2}^{2}}(d_{3}\Delta u_{2} + \kappa v_{2}) \right] \dd{x} \\
    		& + \int_{\Omega_{1}} \left[ 2 \frac{v_{1}}{v_{2}}(d_{2} \Delta v_{1} - \kappa v_{1}) - \frac{v_{1}^{2}}{v_{2}^{2}}(d_{2} \Delta v_{2} - \kappa v_{2}) \right] \dd{x} \\
    		&+ \int_{\Omega_{2}} \left[ 2 \frac{w_{1}}{w_{2}}(d_{3}\Delta w_{1} - \rho w_{1}) - \frac{w_{1}^{2}}{w_{2}^{2}}(d_{3}\Delta w_{2} - \rho w_{2}) \right] \dd{x} \\
    		& + \int_{\Omega_{2}} \left[ 2 \frac{z_{1}}{z_{2}}(d_{2}\Delta z_{1} + \rho w_{1}) - \frac{z_{1}^{2}}{z_{2}^{2}}(d_{2}\Delta z_{2} + \rho w_{2}) \right] \dd{x} \\
    		&= \int_{\Omega_{1}} \left[ \qty( 2\frac{u_{1}}{u_{2}}d_{3}\Delta u_{1} - \frac{u_{1}^{2}}{u_{2}^{2}}d_{3}\Delta u_{2} )  + \qty( 2\frac{u_{1}}{u_{2}}(\kappa v_{1}) - \frac{u_{1}^{2}}{u_{2}^{2}} (\kappa v_{2})) \right] \dd{x} \\
    		& + \int_{\Omega_{1}} \left[ \qty( 2\frac{v_{1}}{v_{2}}d_{2}\Delta v_{1} -  \frac{v_{1}^{2}}{v_{2}^{2}}d_{2}\Delta v_{2} )  + \qty( 2\frac{v_{1}}{v_{2}}(-\kappa v_{1}) - \frac{v_{1}^{2}}{v_{2}^{2}} (-\kappa v_{2})) \right] \dd{x} \\
    		&+ \int_{\Omega_{2}} \left[ \qty( 2\frac{w_{1}}{w_{2}}d_{3}\Delta w_{1} - \frac{w_{1}^{2}}{w_{2}^{2}}d_{3}\Delta w_{2} )  + \qty( 2\frac{w_{1}}{w_{2}}(-\rho  w_{1}) - \frac{w_{1}^{2}}{w_{2}^{2}} (-\rho  w_{2})) \right] \dd{x} \\
    		& + \int_{\Omega_{2}} \left[ \qty( 2\frac{z_{1}}{z_{2}}d_{2}\Delta z_{1} - \frac{z_{1}^{2}}{z_{2}^{2}}d_{2}\Delta z_{2} )  + \qty( 2\frac{z_{1}}{z_{2}}(\rho  w_{1}) - \frac{z_{1}^{2}}{z_{2}^{2}} (\rho  w_{2})) \right] \dd{x}.
    	\end{align*}
    	In the following, we use the identities:
    	\[ \nabla \qty( \frac{u_{1}^{2}}{u_{2}^{2}} ) = \nabla \qty(\qty( \frac{u_{1}}{u_{2}} )^{2}) = 2 \frac{u_{1}}{u_{2}} \nabla\qty( \frac{u_{1}}{u_{2}} ) \textrm{ and } \nabla\qty( \frac{u_{1}}{u_{2}} ) = \frac{u_{2}\nabla u_{1}  - u_{1} \nabla u_{2}}{u_{2}^{2}}. \]
    	Let us consider each of the terms. Using integration by parts, we get
    	\begin{align*}
    		&\int_{\Omega_{1}} \qty( 2\frac{u_{1}}{u_{2}}d_{3}\Delta u_{1} -  \frac{u_{1}^{2}}{u_{2}^{2}}d_{3}\Delta u_{2} ) \dd{x} \\
    		&= -\int_{\Omega_{1}} 2d_{3} \nabla u_{1} \cdot \nabla\qty(\frac{u_{1}}{u_{2}}) \dd{x} + \int_{\Gamma} 2\frac{u_{1}}{u_{2}} (d_{3} \partial_{\eta} u_{1}) \dd{s} \\
    		&\qquad + \int_{\Omega_{1}} d_{3} \nabla u_{2} \cdot \nabla \qty( \frac{u_{1}^{2}}{u_{2}^{2}} ) \dd{x} - \int_{\Gamma} \frac{u_{1}^{2}}{u_{2}^{2}} (d_{3}\partial_{\eta} u_{2}) \dd{s} \\
    		&= -\int_{\Omega_{1}} 2d_{3} \nabla u_{1} \cdot \nabla\qty(\frac{u_{1}}{u_{2}}) \dd{x} + \int_{\Gamma} 2\frac{u_{1}}{u_{2}} \mu(w_{1} - u_{1}) \dd{s} \\
    		&\qquad + \int_{\Omega_{1}} 2d_{3} \frac{u_{1}}{u_{2}} \nabla u_{2} \cdot \nabla \qty( \frac{u_{1}}{u_{2}} ) \dd{x} - \int_{\Gamma} \frac{u_{1}^{2}}{u_{2}^{2}} \mu(w_{2} - u_{2}) \dd{s} \\
    		&= -2d_{3} \int_{\Omega_{1}} \qty( \nabla u_{1}  - \frac{u_{1}}{u_{2}} \nabla u_{2} ) \cdot \nabla\qty(\frac{u_{1}}{u_{2}}) \dd{x} \\
    		&\qquad + \int_{\Gamma} \qty( 2\frac{u_{1}}{u_{2}} \mu(w_{1} - u_{1}) - \frac{u_{1}^{2}}{u_{2}^{2}} \mu(w_{2} - u_{2}) ) \dd{s} \\
    		&= -2d_{3} \int_{\Omega_{1}} u_{2}\qty( \frac{u_{2}\nabla u_{1}  - u_{1} \nabla u_{2}}{u_{2}^{2}} ) \cdot \nabla\qty(\frac{u_{1}}{u_{2}}) \dd{x} \\
    		&\qquad + \int_{\Gamma} \qty( 2\frac{u_{1}}{u_{2}} \mu(w_{1} - u_{1}) - \frac{u_{1}^{2}}{u_{2}^{2}} \mu(w_{2} - u_{2}) ) \dd{s} \\
    		&= -2d_{3} \int_{\Omega_{1}} u_{2}\nabla\qty(\frac{u_{1}}{u_{2}}) \cdot \nabla\qty(\frac{u_{1}}{u_{2}}) \dd{x} \\
    		&\qquad + \int_{\Gamma} \qty( 2\frac{u_{1}}{u_{2}} \mu(w_{1} - u_{1}) - \frac{u_{1}^{2}}{u_{2}^{2}} \mu(w_{2} - u_{2}) ) \dd{s} \\
    		&= -2d_{3} \int_{\Omega_{1}} u_{2}\qty|\nabla\qty(\frac{u_{1}}{u_{2}})|^{2} \dd{x}  + \int_{\Gamma} \qty( 2\frac{u_{1}}{u_{2}} \mu(w_{1} - u_{1}) - \frac{u_{1}^{2}}{u_{2}^{2}} \mu(w_{2} - u_{2}) ) \dd{s}.
    	\end{align*}
    	Analogously, we therefore get for $u$ and $v$:
    	\begin{align*}
    		&\int_{\Omega_{1}} \qty( 2\frac{u_{1}}{u_{2}}d_{3}\Delta u_{1} -  \frac{u_{1}^{2}}{u_{2}^{2}}d_{3}\Delta u_{2} ) \dd{x} \\
    		&= -2d_{3} \int_{\Omega_{1}} u_{2}\qty|\nabla\qty(\frac{u_{1}}{u_{2}})|^{2} \dd{x} + \int_{\Gamma} \qty( 2\frac{u_{1}}{u_{2}} \mu(w_{1} - u_{1}) - \frac{u_{1}^{2}}{u_{2}^{2}} \mu(w_{2} - u_{2}) ) \dd{s}, \\
    		&\int_{\Omega_{1}} \qty( 2\frac{v_{1}}{v_{2}}d_{2}\Delta v_{1} -  \frac{v_{1}^{2}}{v_{2}^{2}}d_{2}\Delta v_{2} ) \dd{x} \\
    		&= -2d_{2} \int_{\Omega_{1}} v_{2}\qty|\nabla\qty(\frac{v_{1}}{v_{2}})|^{2} \dd{x} + \int_{\Gamma} \qty( 2\frac{v_{1}}{v_{2}} \nu(z_{1} - v_{1}) - \frac{v_{1}^{2}}{v_{2}^{2}} \nu(z_{2} - v_{2}) ) \dd{s},
    	\end{align*}
    	and for $w$ and $z$:
    	\begin{align*}
    		&\int_{\Omega_{2}} \qty( 2\frac{w_{1}}{w_{2}}d_{3}\Delta w_{1} -  \frac{w_{1}^{2}}{w_{2}^{2}}d_{3}\Delta w_{2} ) \dd{x} \\
    		&= -2d_{3} \int_{\Omega_{2}} w_{2}\qty|\nabla\qty(\frac{w_{1}}{w_{2}})|^{2} \dd{x} + \int_{\Gamma} \qty( -2\frac{w_{1}}{w_{2}} \mu(w_{1} - u_{1}) + \frac{w_{1}^{2}}{w_{2}^{2}} \mu(w_{2} - u_{2}) ) \dd{s}, \\
    		&\int_{\Omega_{2}} \qty( 2\frac{z_{1}}{z_{2}}d_{2}\Delta z_{1} -  \frac{z_{1}^{2}}{z_{2}^{2}}d_{2}\Delta z_{2} ) \dd{x} \\
    		&= -2d_{2} \int_{\Omega_{2}} z_{2}\qty|\nabla\qty(\frac{z_{1}}{z_{2}})|^{2} \dd{x} + \int_{\Gamma} \qty( -2\frac{z_{1}}{z_{2}} \nu(z_{1} - v_{1}) + \frac{z_{1}^{2}}{z_{2}^{2}} \nu(z_{2} - v_{2}) ) \dd{s}.
    	\end{align*}
    	Rewrite the following terms as follows:
    	\begin{align*}
    		&2\frac{u_{1}}{u_{2}}(\kappa v_{1}) - \frac{u_{1}^{2}}{u_{2}^{2}} (\kappa v_{2}) =  2\frac{u_{1}}{u_{2}}\frac{v_{1}}{v_{2}}(\kappa v_{2}) - \frac{u_{1}^{2}}{u_{2}^{2}} (\kappa v_{2}), \\
    		&2\frac{v_{1}}{v_{2}}(-\kappa v_{1}) - \frac{v_{1}^{2}}{v_{2}^{2}} (-\kappa v_{2}) =  -2\frac{v_{1}^{2}}{v_{2}^{2}}(\kappa v_{2}) + \frac{v_{1}^{2}}{v_{2}^{2}} (\kappa v_{2})), \\
    		&2\frac{w_{1}}{w_{2}}(-\rho  w_{1}) - \frac{w_{1}^{2}}{w_{2}^{2}} (-\rho  w_{2}) = -2\frac{w_{1}^{2}}{w_{2}^{2}}(\rho  w_{2}) + \frac{w_{1}^{2}}{w_{2}^{2}} (\rho  w_{2}), \\
    		&2\frac{z_{1}}{z_{2}}(\rho  w_{1}) - \frac{z_{1}^{2}}{z_{2}^{2}} (\rho  w_{2}) = 2\frac{z_{1}}{z_{2}}\frac{w_{1}}{w_{2}}(\rho  w_{2}) - \frac{z_{1}^{2}}{z_{2}^{2}} (\rho  w_{2}).
    	\end{align*}
    	Hence, we have the following simplifications:
    	\begin{align*}
    		\int_{\Omega_{1}} & \left[ \qty(2\frac{u_{1}}{u_{2}}(\kappa v_{1}) - \frac{u_{1}^{2}}{u_{2}^{2}} (\kappa v_{2})) + \qty( 2\frac{v_{1}}{v_{2}}(-\kappa v_{1}) - \frac{v_{1}^{2}}{v_{2}^{2}} (-\kappa v_{2}) ) \right] \dd{x} \\
    		&= \int_{\Omega_{1}} \left[ 2\frac{u_{1}}{u_{2}}\frac{v_{1}}{v_{2}}(\kappa v_{2}) - \frac{u_{1}^{2}}{u_{2}^{2}} (\kappa v_{2}) -2\frac{v_{1}^{2}}{v_{2}^{2}}(\kappa v_{2}) + \frac{v_{1}^{2}}{v_{2}^{2}} (\kappa v_{2}) \right] \dd{x} \\
    		&= \int_{\Omega_{1}} \left[ 2\frac{u_{1}}{u_{2}}\frac{v_{1}}{v_{2}}(\kappa v_{2}) - \frac{u_{1}^{2}}{u_{2}^{2}} (\kappa v_{2}) -\frac{v_{1}^{2}}{v_{2}^{2}}(\kappa v_{2}) \right] \dd{x} \\
    		&= -\int_{\Omega_{1}} \kappa v_{2} \qty| \frac{u_{1}}{u_{2}} - \frac{v_{1}}{v_{2}}|^{2} \dd{x}.
    	\end{align*}
    	Analogously, and hence,
    	\begin{align*}
    		\int_{\Omega_{1}} & \left[ \qty(2\frac{u_{1}}{u_{2}}(\kappa v_{1}) - \frac{u_{1}^{2}}{u_{2}^{2}} (\kappa v_{2})) + \qty( 2\frac{v_{1}}{v_{2}}(-\kappa v_{1}) - \frac{v_{1}^{2}}{v_{2}^{2}} (-\kappa v_{2}) ) \right] \dd{x} \\
    		&= -\int_{\Omega_{1}} \kappa v_{2} \qty| \frac{u_{1}}{u_{2}} - \frac{v_{1}}{v_{2}}|^{2} \dd{x}, \\
    		\int_{\Omega_{2}} & \left[ 2\frac{w_{1}}{w_{2}}(-\rho  w_{1}) - \frac{w_{1}^{2}}{w_{2}^{2}} (-\rho  w_{2}) + 2\frac{z_{1}}{z_{2}}(\rho  w_{1}) - \frac{z_{1}^{2}}{z_{2}^{2}} (\rho  w_{2}) \right] \dd{x} \\
    		&= - \int_{\Omega_{2}} \rho  w_{2} \qty| \frac{w_{1}}{w_{2}} - \frac{z_{1}}{z_{2}} |^{2} \dd{x}.
    	\end{align*}
    	The boundary integrals can also be simplified in a similar manner:
    	\begin{align*}
    		\int_{\Gamma} & \qty( 2\frac{u_{1}}{u_{2}} \mu(w_{1} - u_{1}) - \frac{u_{1}^{2}}{u_{2}^{2}} \mu(w_{2} - u_{2}) -2\frac{w_{1}}{w_{2}} \mu(w_{1} - u_{1}) + \frac{w_{1}^{2}}{w_{2}^{2}} \mu(w_{2} - u_{2}) ) \dd{s} \\
    		&= - \int_{\Gamma} \mu(w_{2} + u_{2}) \qty| \frac{u_{1}}{u_{2}} - \frac{w_{1}}{w_{2}} |^{2} \dd{s}, \\
    		\int_{\Gamma} & \qty( 2\frac{v_{1}}{v_{2}} \nu(z_{1} - v_{1}) - \frac{v_{1}^{2}}{v_{2}^{2}} \nu(z_{2} - v_{2}) -2\frac{z_{1}}{z_{2}} \nu(z_{1} - v_{1}) + \frac{z_{1}^{2}}{z_{2}^{2}} \nu(z_{2} - v_{2}) ) \dd{s} \\
    		&= - \int_{\Gamma} \nu(z_{2} + v_{2}) \qty| \frac{v_{1}}{v_{2}} - \frac{z_{1}}{z_{2}} |^{2} \dd{s}.
    	\end{align*}
    	Combining all previous computations, we obtain the entropy-production functional
    	\begin{equation*}
    		\begin{split}
    			&\mathcal{D}(\mathbf{c}_{1}|\mathbf{c}_{2}) = - \dv{}{t} \mathcal{E}(\mathbf{c}_{1}|\mathbf{c}_{2}) \\
    			&= 2d_{3} \int_{\Omega_{1}} u_{2} \qty| \nabla\qty(\frac{u_{1}}{u_{2}}) |^{2} \dd{x} + 2d_{2} \int_{\Omega_{1}} v_{2} \qty| \nabla\qty(\frac{v_{1}}{v_{2}}) |^{2} \dd{x} +\kappa \int_{\Omega_{1}} v_{2} \qty| \frac{u_{1}}{u_{2}} - \frac{v_{1}}{v_{2}} |^{2} \dd{x} \\
    			& + 2d_{3} \int_{\Omega_{2}} w_{2} \qty| \nabla\qty(\frac{w_{1}}{w_{2}}) |^{2} \dd{x} + 2d_{2} \int_{\Omega_{2}} z_{2} \qty| \nabla\qty(\frac{z_{1}}{z_{2}}) |^{2} \dd{x} + \rho \int_{\Omega_{2}} w_{2} \qty| \frac{w_{1}}{w_{2}} - \frac{z_{1}}{z_{2}} |^{2} \dd{x} \\
    			& + \mu\int_{\Gamma} (w_{2} + u_{2}) \qty| \frac{u_{1}}{u_{2}} - \frac{w_{1}}{w_{2}} |^{2} \dd{s} + \nu\int_{\Gamma} (z_{2} + v_{2}) \qty| \frac{v_{1}}{v_{2}} - \frac{z_{1}}{z_{2}} |^{2} \dd{s},
    		\end{split}
    	\end{equation*}
    	as desired.
    \end{proof}

\end{appendices}

\bibliography{sn-bibliography}

@article{Glatz-2014,
	author = {Glatz, J.F.C. and Luiken, J. JFP.},
	date-added = {2025-02-03 18:14:50 +0100},
	date-modified = {2025-02-03 18:14:50 +0100},
	doi = {10.1016/j.plefa.2014.02.007},
	journal = {Prostaglandins Leukotrienes and Essential Fatty Acids},
	month = {Mar},
	pages = {57--62},
	publisher = {Elsevier},
	title = {{Fatty acids in cell signaling: Historical perspective and future outlook}},
	volume = {92},
	year = {2015},
	bdsk-url-1 = {https://doi.org/10.1016/j.plefa.2014.02.007}
}

@article{Zechner-2012,
	author = {Zechner, R. and Zimmermann, R. and Eichmann, T.O. and Kohlwein, S.D. and Haemmerle, G. and Lass, A. and Madeo, F.},
	date-added = {2025-02-03 18:14:50 +0100},
	date-modified = {2025-02-03 18:14:50 +0100},
	doi = {10.1016/j.cmet.2011.12.018},
	journal = {Cell Metabolism},
	month = {Mar},
	number = {3},
	pages = {279--291},
	publisher = {Elsevier Inc.},
	title = {{FAT SIGNALS - Lipases and lipolysis in lipid metabolism and signaling}},
	volume = {15},
	year = {2012},
	bdsk-url-1 = {https://doi.org/10.1016/j.cmet.2011.12.018}
}

@article{elias2023,
	author = {Elias, J. and Fellner, K. and Hofer, P. and Oberer, M. and Schreiber, R. and Zechner, R.},
	date-added = {2025-02-03 18:14:50 +0100},
	date-modified = {2025-02-03 18:14:50 +0100},
	doi = {10.1007/s11538-023-01188-7},
	journal = {Bulletin of Mathematical Biology},
	month = {Aug},
	number = {82},
	title = {{The Potential Roles of Transacylation in Intracellular Lipolysis and Related QSSA Approximations}},
	volume = {85},
	year = {2023},
	bdsk-url-1 = {https://doi.org/10.1007/s11538-023-01188-7}
}

@article{vazquez2016,
	author = {Vazquez, R.},
	date-added = {2025-02-03 18:14:50 +0100},
	date-modified = {2025-02-03 18:18:36 +0100},
	doi = {10.1016/j.camwa.2016.05.010},
	issn = {0898-1221},
	journal = {Computers and Mathematics with Applications},
	month = {Aug},
	number = {3},
	pages = {523-554},
	title = {{A new design for the implementation of isogeometric analysis in Octave and Matlab: GeoPDEs 3.0}},
	volume = {72},
	year = {2016},
	bdsk-url-1 = {https://doi.org/10.1016/j.camwa.2016.05.010}
}

@article{defalco2011,
	author = {{de Falco}, C. and Reali, A. and Vazquez, R.},
	date-added = {2025-02-03 18:14:50 +0100},
	date-modified = {2025-02-03 18:15:36 +0100},
	doi = {10.1016/j.advengsoft.2011.06.010},
	issn = {0965-9978},
	journal = {Advances in Engineering Software},
	month = {Dec},
	number = {12},
	pages = {1020-1034},
	title = {{GeoPDEs: A research tool for Isogeometric Analysis of PDEs}},
	volume = {42},
	year = {2011},
	bdsk-url-1 = {https://doi.org/10.1016/j.advengsoft.2011.06.010}
}

@article{onal2017,
	author = {Onal, G. and Kutlu, O. and Gozuacik, D. and {Dokmeci Emre}, S.},
	date-added = {2025-02-03 18:14:50 +0100},
	date-modified = {2025-02-03 18:14:50 +0100},
	doi = {10.1186/s12944-017-0521-7},
	journal = {Lipids in Health and Disease},
	month = {Jun},
	number = {128},
	title = {Lipid Droplets in Health and Disease},
	volume = {16},
	year = {2017},
	bdsk-url-1 = {https://doi.org/10.1186/s12944-017-0521-7}
}

@article{fellner2017,
	author = {Fellner, K. and Prager, W. and Tang, B.Q.},
	date-added = {2025-02-03 18:14:50 +0100},
	date-modified = {2025-02-03 18:14:50 +0100},
	doi = {10.3934/krm.2017042},
	journal = {Kinetic and Related Models},
	month = {Dec},
	number = {4},
	pages = {1055-1087},
	title = {The entropy method for reaction-diffusion systems without detailed balance: First order chemical reaction networks},
	volume = {10},
	year = {2017},
	bdsk-url-1 = {https://doi.org/10.3934/krm.2017042}
}

@incollection{kfellnerbtang2017,
	address = {Cham},
	author = {Fellner, K. and Tang, B.Q.},
	booktitle = {From Particle Systems to Partial Differential Equations},
	date-added = {2025-02-03 18:14:50 +0100},
	date-modified = {2025-02-03 18:20:02 +0100},
	doi = {{10.1007/978-3-319-66839-0_8}},
	editor = {Patricia Goncalves and Ana Jacinta Soares},
	month = {Nov},
	pages = {153-176},
	publisher = {Springer International Publishing},
	title = {Entropy Methods and Convergence to Equilibrium for Volume-Surface Reaction-Diffusion Systems},
	volume = {209},
	year = {2017},
	bdsk-url-1 = {{https://doi.org/10.1007/978-3-319-66839-0_8}}
}

@article{egger2018,
	author = {Egger, H. and Fellner, K. and Pietschmann, J.F. and Tang, B.Q.},
	date-added = {2025-02-03 18:14:50 +0100},
	date-modified = {2025-02-03 18:14:50 +0100},
	doi = {10.1016/j.amc.2018.04.031},
	journal = {Applied Mathematics and Computation},
	month = {Nov},
	pages = {351-367},
	title = {Analysis and numerical solution of coupled volume-surface reaction-diffusion systems with application to cell biology},
	volume = {336},
	year = {2018},
	bdsk-url-1 = {https://doi.org/10.1016/j.amc.2018.04.031}
}

@article{farese2009,
	author = {Farese, R. and Walther, T.},
	date-added = {2025-02-03 18:14:50 +0100},
	date-modified = {2025-02-03 18:14:50 +0100},
	doi = {10.1016/j.cell.2009.11.005},
	journal = {Cell},
	month = {Nov},
	number = {5},
	pages = {855 -- 860},
	title = {Lipid Droplets Finally Get a Little {R-E-S-P-E-C-T}},
	volume = {139},
	year = {2009},
	bdsk-url-1 = {https://doi.org/10.1016/j.cell.2009.11.005}
}

@article{zimmermann1383,
	author = {Zimmermann, R. and Strauss, J.G. and Haemmerle, G. and Schoiswohl, G. and {Birner-Gruenberger}, R. and Riederer, M. and Lass, A. and Neuberger, G. and Eisenhaber, F. and Hermetter, A. and Zechner, R.},
	date-added = {2025-02-03 18:14:50 +0100},
	date-modified = {2025-02-03 18:14:50 +0100},
	doi = {10.1126/science.1100747},
	issn = {0036-8075},
	journal = {Science},
	month = {Nov},
	number = {5700},
	pages = {1383--1386},
	publisher = {American Association for the Advancement of Science},
	title = {Fat Mobilization in Adipose Tissue Is Promoted by Adipose Triglyceride Lipase},
	volume = {306},
	year = {2004},
	bdsk-url-1 = {https://doi.org/10.1126/science.1100747}
}

@incollection{schweiger2014,
	author = {Schweiger, M. and Eichmann, T.O. and Taschler, U. and Zimmermann, R. and Zechner, R. and Lass, A.},
	address = {Cambridge},
	booktitle = {Methods of Adipose Tissue Biology, Part B},
	date-added = {2025-02-03 18:14:50 +0100},
	date-modified = {2025-02-03 18:20:41 +0100},
	doi = {10.1016/B978-0-12-800280-3.00010-4},
	editor = {Ormond A. MacDougald},
	issn = {0076-6879},
	pages = {171-193},
	publisher = {Academic Press},
	series = {Methods in Enzymology},
	title = {Chapter Ten - Measurement of Lipolysis},
	volume = {538},
	year = {2014},
	bdsk-url-1 = {https://doi.org/10.1016/B978-0-12-800280-3.00010-4}
}

@book{olver2010,
	author = {Olver, F.W.J. and Lozier, D.W. and Boisvert, R.F. and Clark, C.W.},
	address = {New York},
	isbn = {978-0-521-14063-8},
	volume = {},
	edition = {},
	publisher = {Cambridge University Press},
	title = {NIST Handbook of Mathematical Functions},
	year = {2010}
}

@book{dautray2000,
	author = {Dautray, R. and Lions, J.L.},
	address = {Heidelberg},
	date-added = {2025-02-03 18:14:50 +0100},
	date-modified = {2025-02-03 18:14:50 +0100},
	edition = {1st},
	isbn = {978-3-540-66101-6},
	publisher = {Springer-Verlag Berlin},
	title = {Mathematical Analysis and Numerical Methods for Science and Technology},
	volume = {5},
	year = {2000}
}

@book{evans2010,
	author = {Evans, L.C.},
	address = {Providence, Rhose Island},
	date-added = {2025-02-03 18:14:50 +0100},
	date-modified = {2025-02-03 18:14:50 +0100},
	edition = {2nd},
	isbn = {978-1-4704-1144-2},
	publisher = {American Mathematical Society},
	series = {Graduate Studies in Mathematics},
	title = {Partial Differential Equations},
	volume = {19},
	year = {2010}
}

@book{salsa2022,
	author = {Salsa, S. and Verzini, G.},
	title = {Partial Differential Equations in Action},
	subtitle = {From Modelling to Theory},
	year = {2022},
	edition = {4th},
	volume = {},
	series = {UNITEXT},
	publisher = {Springer},
    address = {Cham},
    doi = {10.1007/978-3-031-21853-8},
	isbn = {978-3-031-21853-8}
}

@book{grisvard2011,
	author = {Grisvard, P.},
	address = {Philadelphia},
	date-added = {2025-02-03 18:14:50 +0100},
	date-modified = {2025-02-03 18:14:50 +0100},
	doi = {10.1137/1.9781611972030},
	isbn = {978-1-611972-02-3},
	publisher = {Society for Industrial and Applied Mathematics},
	series = {Classics in Applied Mathematics},
	title = {Elliptic Problems in Nonsmooth Domains},
	year = {2011},
	bdsk-url-1 = {https://doi.org/10.1137/1.9781611972030}
}

@book{pao1992,
	author = {Pao, C.V.},
	address = {New {Y}ork},
	date-added = {2025-02-03 18:14:50 +0100},
	date-modified = {2025-02-03 18:14:50 +0100},
	doi = {10.1007/978-1-4615-3034-3},
	edition = {1st},
	isbn = {978-1-4613-6323-1},
	publisher = {Springer},
	title = {Nonlinear Parabolic and Elliptic Equations},
	year = {1992},
	bdsk-url-1 = {https://doi.org/10.1007/978-1-4615-3034-3}
}

@book{logg2012,
	author = {Logg, A. and Mardal, {K.A.} and Wells, {G.N.}},
	address = {Heidelberg},
	date-added = {2025-02-03 18:14:50 +0100},
	date-modified = {2025-02-03 18:14:50 +0100},
	doi = {10.1007/978-3-642-23099-8},
	publisher = {Springer Berlin},
	title = {Automated Solution of Differential Equations by the Finite Element Method},
	year = {2012},
	bdsk-url-1 = {https://doi.org/10.1007/978-3-642-23099-8}
}

@book{QuarteroniValli1999,
    author = {Quarteroni, A. and Valli, A.},
    title = {Domain Decomposition Methods for Partial Differential Equations},
    publisher = {Oxford University Press},
    address = {Oxford},
    year = {1999},
    isbn = {9780198501787},
    series = {Oxford Science Publications},
    doi = {10.1093/oso/9780198501787.001.0001}
}

@book{Adams2003,
    author = {Adams, R. and Fournier, J.},
    title = {Sobolev Spaces},
    publisher = {Elsevier/Academic Press},
    volume = {140},
    edition = {2nd},
    address = {Amsterdam},
    year = {2003},
    isbn = {9780080541297},
    series = {Pure and Applied Mathematics},
    doi = {}
}

@book{hughes2009,
	address = {West Sussex},
	author = {{Austin Cottrell}, J. and Hughes, {T.J.R.} and Bazilevs, Y.},
	date-added = {2025-02-03 18:14:50 +0100},
	date-modified = {2025-02-03 18:14:50 +0100},
	doi = {10.1002/9780470749081},
	edition = {1st},
	isbn = {978-0-470-74909-8},
	publisher = {John Wiley and Sons Ltd},
	title = {Isogeometric Analysis: Toward Integration of CAD and FEA},
	year = {2009},
	bdsk-url-1 = {https://doi.org/10.1002/9780470749081}
}

@article{Kim-2008,
	author = {Kim, J. and Saidel, {G.M.} and Kalhan, {S.C.}},
	date-added = {2025-02-03 18:14:50 +0100},
	date-modified = {2025-02-03 18:17:14 +0100},
	doi = {10.1016/j.jtbi.2007.12.005},
	journal = {Journal of Theoretical Biology},
	month = {Apr},
	number = {3},
	pages = {523--540},
	title = {A computational model of adipose tissue metabolism: Evidence for intracellular compartmentation and differential activation of lipases},
	volume = {251},
	year = {2008},
	bdsk-url-2 = {https://doi.org/10.1016/j.jtbi.2007.12.005}
}

@article{Loevfors-2021,
	author = {L{\"o}vfors, W. and Ekstr{\"o}m, J. and J{\"o}nsson, C. and Str{\aa}lfors, P. and Cedersund, G. and Nyman, E.},
	date-added = {2025-02-03 18:14:50 +0100},
	date-modified = {2025-02-03 18:18:01 +0100},
	doi = {10.1371/journal.pone.0261681},
	journal = {PLoS One},
	month = {Dec},
	number = {12},
	pages = {e0261681},
	title = {A systems biology analysis of lipolysis and fatty acid release from adipocytes in vitro and from adipose tissue in vivo},
	volume = {16},
	year = {2021},
	bdsk-url-1 = {https://doi.org/10.1371/journal.pone.0261681}
}

@article{Kulminskaya-2021,
	author = {Kulminskaya, N. and Radler, C. and Viertlmayr, R. and Heier, C. and Hofer, P. and {Cola{\c c}o-Gaspar}, M. and Owens, {R.J.} and Zimmermann, R. and Schreiber, R. and Zechner, R. and Oberer, M.},
	date-added = {2025-02-03 18:14:50 +0100},
	date-modified = {2025-02-03 18:17:39 +0100},
	doi = {10.1016/j.jbc.2021.101206},
	journal = {J Biol Chem.},
	month = {Oct},
	number = {297},
	pages = {101206},
	title = {Optimized expression and purification of adipose triglyceride lipase improved hydrolytic and transacylation activities in vitro},
	volume = {4},
	year = {2021},
	bdsk-url-1 = {https://doi.org/10.1016/j.jbc.2021.101206}
}

@article{Padmanabha-2018,
	author = {Padmanabha, {K.M.} and Wechselberger, L. and Liziczai, M. and {De la Rosa Rodriguez}, M. and Grabner, {G.F.} and Heier, C. and Viertlmayr, R. and Radler, C. and Lichtenegger, J. and Zimmermann, R. and Borst, J.W. and Zechner, R. and Kersten, S. and Oberer, M.},
	date-added = {2025-02-03 18:14:50 +0100},
	date-modified = {2025-02-03 18:18:20 +0100},
	doi = {10.1194/jlr.M082388},
	journal = {J Lipid Res.},
	month = {Mar},
	number = {59},
	pages = {531-541},
	title = {Hypoxia-inducible lipid droplet-associated protein inhibits adipose triglyceride lipase},
	volume = {3},
	year = {2018},
	bdsk-url-1 = {https://doi.org/10.1194/jlr.M082388}
}

\end{document}